\documentclass[11pt]{article}
\usepackage{amsmath}
\usepackage{amssymb}
\usepackage{amsthm}
\textwidth=165truemm \textheight=240truemm \voffset-2.5truecm
\hoffset-1truecm \hfuzz4pt

\newtheorem{theorem}{Theorem}[section]
\newtheorem{lemma}[theorem]{Lemma}
\newtheorem{corollary}[theorem]{Corollary}
\newtheorem{proposition}[theorem]{Proposition}
\newtheorem{definition}[theorem]{Definition}
\newtheorem{example}[theorem]{Example}
\newtheorem{question}[theorem]{Question}

\newtheorem*{thmLM}{Theorem LM}
\newtheorem*{thmS}{Theorem S}
\newtheorem*{thmSs}{Theorem S$'$}
\newtheorem*{thmAG}{Theorem AG}
\newtheorem*{thmCMP}{Theorem CMP}
\newtheorem*{thmCS}{Theorem CS}
\newtheorem*{exBM}{Example BM}
\newtheorem*{thmCM}{Theorem CM}
\newtheorem*{qA}{Question A}
\newtheorem*{qBG}{Question BG}
\newtheorem*{qGS}{Question GS}
\newtheorem*{thmMF}{Theorem MF}

\numberwithin{equation}{section}

\begin{document}

\def\vru{\vrule width0pt height0pt depth12pt}
\def\C{{\mathbb C}}
\def\N{{\mathbb N}}
\def\Z{{\mathbb Z}}
\def\R{{\mathbb R}}
\def\K{{\mathbb K}}
\def\T{{\mathbb T}}
\def\D{{\mathbb D}}
\def\P{{\mathbb P}}
\def\G{{\mathbb G}}
\def\F{{\cal F}}
\def\U{{\cal U}}
\def\M{{\cal M}}
\def\L{{\cal L}}
\def\H{{\cal H}}
\def\CC{{\cal C}}
\def\E{{\cal E}}
\def\rr{{\cal R}}
\def\pp{{\cal P}}
\def\epsilon{\varepsilon}
\def\kappa{\varkappa}
\def\phi{\varphi}
\def\leq{\leqslant}
\def\geq{\geqslant}
\def\re{\text{\tt Re}\,}
\def\slim{\mathop{\hbox{$\overline{\hbox{\rm lim}}$}}\limits}
\def\ilim{\mathop{\hbox{$\underline{\hbox{\rm lim}}$}}\limits}
\def\supp{\hbox{\tt supp}\,}
\def\dim{\hbox{\tt dim}\,}
\def\ker{\hbox{\tt ker}\,}
\def\spann{\hbox{\tt span}\,}
\def\Re{\hbox{\tt Re}\,}
\def\ssub#1#2{#1_{{}_{{\scriptstyle #2}}}}
\font\Goth=eufm10 scaled 1200
\def\uu{\hbox{{\Goth U}}}
\def\uuu{\hbox{{\Goth H}}}
\def\bin#1#2{\left({{#1}\atop {#2}}\right)}
\def\ww{\widetilde w}

\title{Remarks on common hypercyclic vectors}

\author{Stanislav Shkarin}

\date{}

\maketitle

\begin{abstract} We treat the question of existence of common
hypercyclic vectors for families of continuous linear operators. It
is shown that for any continuous linear operator $T$ on a complex
Fr\'echet space $X$ and a set $\Lambda\subseteq \R_+\times\C$ which
is not of zero three-dimensional Lebesgue measure, the family
$\{aT+bI:(a,b)\in\Lambda\}$ has no common hypercyclic vectors. This
allows to answer negatively questions raised by Godefroy and Shapiro
and by Aron. We also prove a sufficient condition for a family of
scalar multiples of a given operator on a complex Fr\'echet space to
have a common hypercyclic vector. It allows to show that if
$\D=\{z\in\C:|z|<1\}$ and $\phi\in \H^\infty(\D)$ is non-constant,
then the family $\{zM_\phi^\star:b^{-1}<|z|<a^{-1}\}$ has a common
hypercyclic vector, where $M_\phi:\H^2(\D)\to \H^2(\D)$, $M_\phi
f=\phi f$, $a=\inf\{|\phi(z)|:z\in\D\}$ and
$b=\sup\{|\phi(z)|:|z|\in\D\}$, providing an affirmative answer to a
question by Bayart and Grivaux. Finally, extending a result of
Costakis and Sambarino, we prove that the family
$\{aT_b:a,b\in\C\setminus\{0\}\}$ has a common hypercyclic vector,
where $T_bf(z)=f(z-b)$ acts on the Fr\'echet space $\H(\C)$ of
entire functions on one complex variable.
\end{abstract}

\small \noindent{\bf MSC:} \ \ 47A16, 37A25

\noindent{\bf Keywords:} \ \ Hypercyclic operators, hypercyclic
vectors \normalsize

\section{Introduction \label{s1}}\rm

All vector spaces in this article are assumed to be over $\K$ being
either the field $\C$ of complex numbers or the field $\R$ of real
numbers. Throughout this paper all topological spaces and
topological vector spaces {\bf are assumed to be Huasdorff}. As
usual, $\Z_+$ is the set of non-negative integers, $\R_+$ is the set
of non-negative real numbers, $\N$ is the set of positive integers,
$\K^\star=\K\setminus\{0\}$, $\D=\{z\in\C:|z|<1\}$ and
$\T=\{z\in\C:|z|=1\}$. By a {\it compact interval} of the real line
we mean a set of the shape $[a,b]$ with $-\infty<a<b<\infty$. That
is, a singleton is {\bf not} considered to be an interval. For
topological vector spaces $X$ and $Y$, $L(X,Y)$ stands for the space
of continuous linear operators from $X$ to $Y$. We write $L(X)$
instead of $L(X,X)$ and $X^*$ instead of $L(X,\K)$. For $T\in
L(X,Y)$, the dual operator $T^*:Y^*\to X^*$ acts according to the
formula $T^*f(x)=f(Tx)$. Recall \cite{sch} that an $\F$-space is a
complete metrizable topological vector space and a Fr\'echet space
is a locally convex $\F$-space. For a subset $A$ of a vector space
$X$, symbol $\spann(A)$ stands for the linear span of $A$.

\begin{definition}\label{def1}\rm
Let $X$ and $Y$ be topological spaces and $\F=\{T_a:a\in A\}$ be a
family of continuous maps from $X$ to $Y$. An element $x\in X$ is
called {\it universal} for $\F$ if the orbit $\{T_ax:a\in A\}$ is
dense in $Y$ and $\F$ is said to be {\it universal} if it has a
universal element. We denote the set of universal elements for $\F$
by the symbol $\uu(\F)$. A continuous linear operator $T$ acting on
a topological vector space $X$ is called {\it hypercyclic} if the
family of its powers $\{T^n:n\in \Z_+\}$ is universal. Corresponding
universal elements are called {\it hypercyclic vectors} for $T$. The
set of hypercyclic vectors for $T$ is denoted by $\uuu(T)$. That is,
$\uuu(T)=\uu(\{T^n:n\in\Z_+\})$. If $\{T_a:a\in A\}$ is  a family of
continuous linear operators on topological vector space $X$, we
denote
$$
\uuu\{T_a:a\in A\}=\bigcap_{a\in A}\uuu(T_a).
$$
That is, $\uuu\{T_a:a\in A\}$ consists of all vectors $x\in X$ that
are hypercyclic for each $T_a$, $a\in A$.
\end{definition}

Recall that a topological space $X$ is called {\it Baire} if the
intersection of any countable family of dense open subsets of $X$ is
dense. Hypercyclic operators and universal families have been
intensely studied during last few decades, see surveys
\cite{ge1,ge2} and references therein. It is well-known \cite{ge1}
that the set of hypercyclic vectors of a hypercyclic operator on a
separable metrizable Baire topological vector space is a dense
$G_\delta$-set. It immediately follows that any countable family of
hypercyclic operators on such a space has a dense $G_\delta$-set of
common hypercyclic vectors (=hypercyclic for each member of the
family). We are interested in the existence of common hypercyclic
vectors for uncountable families of continuous linear operators.
First results in this direction were obtained by Abakumov and Gordon
\cite{ag} and L\'eon--Saavedra and M\"uller \cite{muller}.

\begin{thmAG}Let $T$ be the backward shift on $\ell_2$. That is, $T\in
L(\ell_2)$,  $Te_0=0$ and  $Te_n=e_{n-1}$ for $n\in\N$, where
$\{e_n\}_{n\in\Z_+}$ is the standard orthonormal basis of $\ell_2$.
Then $\uuu\{aT:a\in\K,\ |a|>1\}$ is a dense $G_\delta$-set.
\end{thmAG}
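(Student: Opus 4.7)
I would use a Baire category argument. Let $\{v_k\}_{k\in\N}$ be a countable dense subset of $\ell_2$ of finitely supported vectors. Since $\{a\in\K:|a|>1\}=\bigcup_{m\in\N}K_m$ with $K_m=\{a:1+1/m\leq|a|\leq m\}$, we have $\uuu\{aT:|a|>1\}=\bigcap_m\uuu\{aT:a\in K_m\}$, so by the Baire theorem it suffices to show that $\uuu\{aT:a\in K\}$ is a dense $G_\delta$-set for every compact annulus $K\subseteq\{|a|>1\}$. Fix such a $K$ and set
$$G_{k,\ell}=\bigl\{x\in\ell_2:\forall\,a\in K,\ \exists\,n\in\Z_+,\ \|a^nT^nx-v_k\|<1/\ell\bigr\},$$
so that $\uuu\{aT:a\in K\}=\bigcap_{k,\ell}G_{k,\ell}$. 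Each $G_{k,\ell}$ is open by a standard compactness argument: for $x_0\in G_{k,\ell}$ the open sets $U_n(x_0)=\{a\in K:\|a^nT^nx_0-v_k\|<1/\ell\}$ cover $K$, so finitely many exponents $n_1,\ldots,n_p$ suffice, and joint continuity of $(a,x)\mapsto a^nT^nx$ produces a neighborhood of $x_0$ in $\ell_2$ on which the same $n_1,\ldots,n_p$ still cover $K$. Hence $\uuu\{aT:a\in K\}$ is $G_\delta$.

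\textbf{Density.} Given $\epsilon>0$, $k,\ell\in\N$, a finitely supported $v_1\in\ell_2$, and $K=\{a:r\leq|a|\leq R\}$ with $r>1$, I would produce $x\in\ell_2$ with $\|x-v_1\|<\epsilon$ and $x\in G_{k,\ell}$. Let $F$ be the forward shift on $\ell_2$, so that $TF=I$. Choose a gap $g>\max\supp v_k$ with $\|v_k\|/(r^g-1)<1/(3\ell)$, then $n_1>\max\supp v_1$ with $2\|v_k\|r^{-n_1}<\epsilon$; take a $\delta$-net $a_1,\ldots,a_p\in K$ ordered so that $|a_1|\leq\cdots\leq|a_p|$, put $n_j=n_1+(j-1)g$ and
$$x=v_1+\sum_{j=1}^p\frac{F^{n_j}v_k}{a_j^{n_j}},$$
so that $\|x-v_1\|\leq\|v_k\|\sum_j r^{-n_j}<\epsilon$. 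The support/gap conditions force $T^{n_j}v_1=0$ and $T^{n_j-n_i}v_k=0$ for $i<j$, which together with $TF=I$ gives
$$a_j^{n_j}T^{n_j}x=v_k+\sum_{i>j}\Bigl(\frac{a_j}{a_i}\Bigr)^{n_j}\frac{F^{n_i-n_j}v_k}{a_i^{n_i-n_j}},$$
whose tail has norm at most $\|v_k\|/(r^g-1)<1/(3\ell)$ since $|a_j/a_i|\leq 1$ and $|a_i|\geq r>1$. For arbitrary $a\in K$ with $|a-a_j|<\delta$, the crucial point is that the $j$-th summand dominates $T^{n_j}x$ with weight $1/|a_j|^{n_j}$, giving $\|T^{n_j}x\|\leq 2\|v_k\|/|a_j|^{n_j}$. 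Combined with $|a^{n_j}-a_j^{n_j}|\leq 2n_j|a_j|^{n_j-1}|a-a_j|$ (valid once $\delta\leq r/n_p$) this bounds $\|a^{n_j}T^{n_j}x-a_j^{n_j}T^{n_j}x\|$ by $4n_j\delta\|v_k\|/r$, which is $<1/(3\ell)$ as soon as $\delta\lesssim 1/n_p$. The resulting constraints $\delta\asymp 1/n_p$ and $p\lesssim R^2/\delta^2\lesssim R^2n_p^2$ with $n_p=n_1+(p-1)g$ are simultaneously satisfiable for a suitable finite $p$.

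\textbf{Main obstacle.} The delicate step is passing from the finite net $\{a_j\}$ to every $a\in K$. A naive estimate $|a^{n_j}-a_j^{n_j}|\leq n_jR^{n_j-1}|a-a_j|$ combined with the coarse bound $\|T^{n_j}x\|=O(r^{-n_j})$ would force $\delta$ to be exponentially small in $n_p$, incompatible with the quadratic growth $p\asymp\delta^{-2}$ of the net size. The fix is the sharper estimate $\|T^{n_j}x\|\leq 2\|v_k\|/|a_j|^{n_j}$: the $|a_j|^{n_j-1}$ factor in $|a^{n_j}-a_j^{n_j}|$ is cancelled exactly by the $1/|a_j|^{n_j}$ weight of the dominant $j$-th summand, reducing the perturbation to linear order $n_j\delta$. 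The ordering of the $a_j$ by modulus therefore plays a double role: it makes $|a_j/a_i|\leq 1$ in the tail sum, and it ties each summand to the correct scale $|a_j|$ so that the perturbation estimate survives. Once these compatibilities are in place, the construction yields the required $x$, and the Baire theorem completes the proof.
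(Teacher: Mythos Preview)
The paper does not prove Theorem~AG; it is cited from Abakumov--Gordon, though it also follows from the paper's own Kitai-type corollary (take $a=0$, $b=1$, $E$ the finitely supported vectors, $S$ the forward shift). So I address your argument on its own.

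The openness part is correct, but the density construction does not close, and precisely at the point you flag as the main obstacle. With a \emph{uniform} mesh $\delta$, covering $K$ forces $p\gtrsim C\delta^{-d}$ ($d=2$ for a complex annulus, $d=1$ for a real interval), while your perturbation estimate needs $\delta\lesssim c/n_p$ with $n_p=n_1+(p-1)g$. In the complex case this gives $p\gtrsim Cn_p^{\,2}\gtrsim Cg^2p^2$, which fails for every large $p$. Even for $d=1$ the two constraints combine to $Cg\lesssim c$; but $g$ must grow with $\ell$ (to force $\|v_k\|/(r^g-1)<1/(3\ell)$) while $c\asymp 1/\ell$ (from $n_j\delta\lesssim 1/\ell$), so this is violated once $\ell$ is large. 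The sentence ``simultaneously satisfiable for a suitable finite $p$'' is therefore not justified: the obstacle you identify is real but not resolved by the cancellation you describe.

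Two repairs are needed. First, invoke Theorem~LM: $\uuu(aT)=\uuu(|a|T)$ collapses the complex annulus to a real interval, so only $d=1$ matters. Second, and this is the essential point, replace the uniform net by a \emph{non-uniform} partition $r=a_1<a_2<\cdots<a_p$ with $a_{j+1}-a_j\asymp c/n_j$; then the perturbation bound holds at each node by construction, and the total length covered is $\sum_j c/n_j\asymp(c/g)\log p\to\infty$, so any interval is reached for $p$ large enough. This harmonic-series trick is exactly the mechanism behind the paper's key lemma on scalar multiples (the interval $[\alpha+\delta,\alpha+2\delta]$ is partitioned by $\theta_l=\alpha+2\delta p/(p+l)$, whose gaps shrink like $1/n_l$), and it is the core idea of the original Abakumov--Gordon proof as well. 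Your ordering by modulus and your tail estimate survive unchanged; only the spacing of the $a_j$ has to be tied to $n_j$ rather than fixed.
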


The following result is of completely different flavor. It is proven
in \cite{muller} for continuous linear operators on Banach spaces
although the proof can be easily adapted \cite{sh1} for continuous
linear operators acting on arbitrary topological vector spaces.

\begin{thmLM} Let $X$ be a complex topological vector
space and $T\in L(X)$. Then $\uu(\F)=\uuu(zT)=\uuu(T)$ for any
$z\in\T$, where $\F=\{wT^n:w\in\T,\ n\in\Z_+\}$. In particular,
$\uuu\{zT:z\in\T\}=\uuu(T)$.
\end{thmLM}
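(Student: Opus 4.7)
The plan is as follows. First, the second equality reduces to the first: the family $\F$ coincides whether it is built from $T$ or from $zT$, since $\{w(zT)^n:w\in\T,\,n\in\Z_+\}=\{(wz^n)T^n:w\in\T,\,n\in\Z_+\}=\F$ because $\T$ is a multiplicative group. Thus once $\uu(\F)=\uuu(T)$ is known for an arbitrary $T$, applying it to $zT$ yields $\uu(\F)=\uuu(zT)$, and the ``in particular'' follows by intersecting over $z\in\T$.

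The inclusion $\uuu(T)\subseteq\uu(\F)$ is trivial, since the $T$-orbit of $x$ is contained in the $\F$-orbit (take $w=1$). For the reverse, fix $x\in\uu(\F)$ and set $Y=\overline{\{T^nx:n\in\Z_+\}}$. Using compactness of $\T$, I first verify that $\T Y$ is closed: if $w_\alpha y_\alpha\to z$ with $y_\alpha\in Y$ and $w_\alpha\in\T$, pass to a subnet with $w_\alpha\to w_\infty\in\T$, whence $y_\alpha\to w_\infty^{-1}z\in Y$ by closedness, so $z\in\T Y$. Since $\T Y$ is then closed and contains the dense $\F$-orbit of $x$, we obtain $\T Y=X$.

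The crux is to upgrade $\T Y=X$ to $Y=X$, and here the complex structure of $X$ is essential: in a complex topological vector space the balanced neighborhoods of $0$ form a basis, and every balanced $V$ is $\T$-invariant ($\T V=V$). Given a nonempty open $U\subseteq X$, pick $u\in U$ and a balanced open $V$ with $u+V\subseteq U$. By density of the $\F$-orbit there is, for any prescribed balanced $V'\subseteq V$, an approximation $w_0T^{n_0}x\in u+V'$, equivalently $T^{n_0}x\in w_0^{-1}u+V'$ (using $\T V'=V'$); if the rotation factor $w_0$ can moreover be forced close to $1$, the error $(w_0^{-1}-1)u$ is small, and $T^{n_0}x$ lands in $u+V\subseteq U$. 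To force $w_0\approx 1$ I would iterate the universality property, exploiting the fact that every $T^mx$ is itself in $\uu(\F)$ (the $\F$-orbit of $T^mx$ differs from that of $x$ by finitely many compact circles of empty interior, and hence remains dense); composing successive approximations yields composite rotation factors whose density in $\T$---by Kronecker--Weyl when $w_0$ is not a root of unity, and by applying Ansari's theorem to $w_0^{-1}T$ when it is---eventually produces rotations arbitrarily close to $1$.

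The main obstacle is the error control in this iteration: each iterate $T^m$ is continuous but not uniformly so, and a naive composition of approximations allows small initial errors to blow up. The fix is to choose the inner-stage neighborhoods small enough---using the continuity of the outer-stage iterates of $T$, which are fixed in advance of the inner stages---so that all compositional errors remain inside $V$. Carrying out this adaptive bookkeeping is the heart of the argument; with it in hand, the choice of $u\in U$ was arbitrary, so every nonempty open $U$ meets $Y$, giving $Y=X$ and hence $x\in\uuu(T)$.
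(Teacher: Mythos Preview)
The paper does not prove Theorem~LM: it is quoted from Le\'on--Saavedra and M\"uller \cite{muller}, with the extension to general topological vector spaces attributed to \cite{sh1}. There is therefore no proof in the paper to compare against; I can only assess your sketch on its own merits.

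Your reductions are correct: the family $\F$ is unchanged when $T$ is replaced by $zT$, the inclusion $\uuu(T)\subseteq\uu(\F)$ is trivial, and the argument that $\T Y$ is closed (hence $\T Y=X$) is fine. The gap is in the crucial step, upgrading $\T Y=X$ to $Y=X$. Your plan is to obtain an approximation $w_0T^{n_0}x\approx u$ and then ``force $w_0\approx 1$'' by iteration, appealing to Kronecker--Weyl if $w_0$ is irrational and to Ansari's theorem applied to $w_0^{-1}T$ if $w_0$ is a root of unity. Neither branch works as stated. The rotation factor $w_0$ is handed to you by universality, not chosen; iterating produces \emph{new} uncontrolled factors $w_1,w_2,\dots$, not powers of the original $w_0$, so Kronecker--Weyl does not apply. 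And Ansari's theorem requires the operator in question to be hypercyclic, but hypercyclicity of $w_0^{-1}T$ (equivalently of $T$) is precisely what you are trying to establish, so the appeal is circular. The error--control issue you flag is real, but it is secondary to this structural problem: even with perfect bookkeeping, your scheme does not produce a rotation factor close to~$1$.

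The argument that actually works proceeds differently. One shows that $P=\{w\in\T:wx\in Y\}$ is a closed sub\emph{semigroup} of $\T$ (if $w_1x,w_2x\in Y$ then $w_1w_2x=\lim T^{n_\alpha}(w_2x)\in Y$, using $T$-invariance of $Y$), hence a closed subgroup, and then argues that $P=\T$; once $\T x\subseteq Y$, density of the $\F$-orbit gives $Y=X$ immediately. The case analysis ``root of unity or not'' does enter the original Le\'on--Saavedra--M\"uller argument, but for the \emph{fixed} parameter $z$ in proving $\uuu(T)\subseteq\uuu(zT)$ (where $T$ is already known to be hypercyclic, so Ansari is available), not for an uncontrollable $w_0$ arising mid-proof.
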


It follows that the family $\{zT:z\in\T\}$ has a common hypercyclic
vector, whenever $T$ is a hypercyclic operator. A result similar to
the above one was recently obtained by Conejero, M\"uller and Peris
\cite{semi} for operators acting on separable $\F$-spaces (see
\cite{sh1} for a proof in a more general setting). Recall that a
family $\{T_t\}_{t\in\R_+}$ of continuous linear operators on a
topological vector space is called an {\it operator semigroup} if
$T_0=I$ and $T_{t+s}=T_tT_s$ for any $t,s\in\R_+$.

\begin{thmCMP} Let $X$ be a topological vector
space and $\{T_t\}_{t\in\R_+}$ be an operator semigroup on $X$.
Assume also that the map $(t,x)\mapsto T_tx$ from $\R_+\times X$ to
$X$ is continuous. Then $\uuu(T_t)=\uu(\F)$ for any $t>0$, where
$\F=\{T_s:s>0\}$. In particular, $\uuu\{T_s:s>0\}=\uuu(T_t)$ for any
$t>0$.
\end{thmCMP}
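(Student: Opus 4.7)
The inclusion $\uuu(T_t)\subseteq\uu(\F)$ is immediate, since $\{T_{nt}x:n\in\Z_+\}\subseteq\{T_sx:s>0\}$ and so density of the former forces density of the latter. The ``In particular'' clause of the theorem then follows by intersecting over all $t>0$. The real content is the reverse inclusion $\uu(\F)\subseteq\uuu(T_t)$ for each $t>0$.

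Fix $x\in\uu(\F)$ and $t>0$; the plan is to show $\overline{\{T_{nt}x:n\in\Z_+\}}=X$. Given any nonempty open $U\subseteq X$ and $y\in U$, I would first invoke joint continuity of $(s,z)\mapsto T_sz$ at $(0,y)$ to extract an open neighbourhood $V\ni y$ with $V\subseteq U$ and a $\delta\in(0,t)$ such that $T_sV\subseteq U$ for every $s\in[0,\delta]$. Density of the orbit of $x$ then yields $s_0>0$ with $T_{s_0}x\in V$, and combining the two observations shows that $T_sx\in U$ throughout the whole interval $s\in[s_0,s_0+\delta]$.

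The difficulty is that $\delta<t$, so this interval need not contain any integer multiple of $t$. To handle this I exploit the semigroup structure: approximate $y$ along a sequence $T_{s_k}x\to y$ with $s_k\to\infty$ (tail orbits of $x$ are dense, so this is possible), write $s_k=n_kt+r_k$ with $r_k\in[0,t)$, and pass to a subsequence so that $r_k\to r\in[0,t]$. Joint continuity then delivers
\[
T_{(n_k+j)t}x \;=\; T_{jt-r_k}\bigl(T_{s_k}x\bigr) \;\longrightarrow\; T_{jt-r}y \qquad (j\geq 1),
\]
so the entire forward $T_t$-orbit of $T_{t-r}y$ is contained in $K:=\overline{\{T_{nt}x:n\in\Z_+\}}$. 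Running the argument at every target $y\in X$ injects a large family of such limits into $K$.

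The main obstacle is to extract $y$ itself from this collection. Taking $j=1$ gives $T_{t-r}y\in K$, so if the limiting residue $r$ could be forced to approach $t$ from below, the limit would collapse to $T_0y=y$ and we would conclude $y\in K$. My plan for this step is to use that the visit set $A_V:=\{s>0:T_sx\in V\}$ is open and unbounded in $\R_+$ (by joint continuity and tail-density), so its projection to $\R/t\Z$ is open; by running the argument with a descending family of shrinking neighbourhoods of $y$ and exploiting joint continuity to enlarge the projected visit set, one should be able to force the visit residues to cluster at every point of $[0,t]$, and in particular at $t$. Executing this ``arbitrary residue'' step cleanly in a general topological vector space is the crux of the argument and is precisely where the joint continuity hypothesis on the semigroup is essential; separate continuity in each variable would not suffice.
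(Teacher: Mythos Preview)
This theorem is not proved in the paper at all; it is quoted from Conejero--M\"uller--Peris \cite{semi} (for separable $\F$-spaces) with the general topological-vector-space version attributed to \cite{sh1}. So there is no proof in the paper to compare against, and the relevant question is simply whether your argument stands on its own.

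Your reduction is correct through the conclusion $T_{t-r}y\in K$ for some subsequential residue $r\in[0,t]$ depending on the target $y$, and you correctly identify the remaining ``arbitrary residue'' step as the crux. But the plan you sketch for that step does not work. You propose to \emph{shrink} the neighbourhood $V$ of $y$ so as to \emph{enlarge} the projection of the visit set $A_V=\{s>0:T_sx\in V\}$ to $\R/t\Z$; this goes the wrong way, since a smaller target produces fewer visit times, and $\pi(A_V)$ shrinks rather than grows. Knowing only that each $\pi(A_V)$ is open and nonempty in the circle gives no reason for $0$ to lie in any of them, so the argument stalls exactly where you say it does.

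The idea that closes the gap in the literature is different in character: rather than trying to control residues, one manufactures an interval of visit times of length exceeding $t$. Concretely, starting from $V_0\ni y$ and $\delta_0>0$ with $T_sV_0\subseteq U$ for $s\in[0,\delta_0]$, one builds a nested sequence of nonempty open sets $V_0\supseteq V_1\supseteq\cdots$ with $T_sV_{k+1}\subseteq V_k$ on $[0,\delta_k]$, so that any $z\in V_m$ satisfies $T_sz\in U$ for all $s\in[0,\delta_0+\cdots+\delta_m]$; once this sum exceeds $t$, a single visit $T_\tau x\in V_m$ finishes the proof. The genuine work is to arrange the iteration so that the $\delta_k$ do not collapse to $0$; this is handled one way in \cite{semi} using the $\F$-space structure and another way in \cite{sh1} for general topological vector spaces. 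Your write-up would need to supply that mechanism to be complete.
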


It follows that if $\{T_t\}_{t\in\R_+}$ is an operator semigroup
such that the map $(t,x)\mapsto T_tx$ is continuous and there exists
$t>0$ for which $T_t$ is hypercyclic, then the family $\{T_s:s>0\}$
has a common hypercyclic vector. Bayart \cite{bay} provided families
of composition operators on the space of holomorphic functions on
$\D$, which have common hypercyclic vectors. Costakis and Sambarino
\cite{cs}, Bayart and Matheron \cite{bm}, Chan and Sanders
\cite{chs} and Gallardo-Guti\'errez and Partington \cite{gp} proved
certain sufficient conditions for a set of families of continuous
linear operators to have a common universal vector. In all the
mentioned papers the criteria were applied to specific sets of
families. For instance, Costakis and Sambarino \cite{cs} proved the
following theorem.

\begin{thmCS} Let $\H(\C)$ be the complex Fr\'echet space of
entire functions on one variable, $D\in L(\H(\C))$ be the
differentiation operator $Df=f'$ and for each $a\in\C$, $T_a\in
L(\H(\C))$ be the translation operator $T_af(z)=f(z-a)$. Then
$\uuu\{T_a:a\in\C^\star\}$, $\uuu\{aT_1:a\in\C^\star\}$ and
$\uuu\{aD:a\in\C^\star\}$ are dense $G_\delta$-sets.
\end{thmCS}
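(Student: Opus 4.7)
The plan is to follow the Costakis--Sambarino Baire category strategy. The family $\{T_a : a \in \C^\star\}$ is treated in detail; $\{aT_1\}$ and $\{aD\}$ follow the same blueprint. First, decompose $\C^\star = \bigcup_{m \in \N} K_m$ where $K_m = \{a : m^{-1} \le |a| \le m\}$ is a compact annulus. Since $\uuu\{T_a : a \in \C^\star\} = \bigcap_m \uuu\{T_a : a \in K_m\}$ and $\H(\C)$ is a Fr\'echet (hence Baire) space, it suffices to prove that $\uuu\{T_a : a \in K\}$ is a dense $G_\delta$-set for every compact $K \subset \C^\star$. Fix a countable base $\mathcal{B}$ of non-empty open sets of $\H(\C)$; then $\uuu\{T_a : a \in K\} = \bigcap_{V \in \mathcal{B}} O_V$, where $O_V := \{f : \forall a \in K,\ \exists n \in \N,\ T_a^n f \in V\}$. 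Each $O_V$ is in fact open: given $f \in O_V$ and $n(a) \in \N$ with $T_a^{n(a)} f \in V$ for each $a \in K$, joint continuity of $(a,g) \mapsto T_a^{n(a)} g$ on $\C \times \H(\C)$ yields a product neighborhood $W_a \times N_a$ of $(a,f)$ on which $T_{a'}^{n(a)} g \in V$; compactness of $K$ reduces $\{W_a\}_{a \in K}$ to a finite subcover $W_{a_1},\ldots,W_{a_l}$, so $\bigcap_i N_{a_i}$ is a neighborhood of $f$ in $O_V$.

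\textbf{Density via Runge's theorem.} Given $f_0 \in \H(\C)$, a disk $K_0 = \overline{D(0,R)}$, a target $g \in \H(\C)$, and $\epsilon > 0$, one must produce $f$ with $\|f - f_0\|_{K_0} < \epsilon$ such that for every $a \in K$ there exists $n \in \N$ with $\|T_a^n f - g\|_{K_0} < \epsilon$. Choose $\eta > 0$ smaller than the moduli of continuity of $f_0$ and $g$ on $\overline{D(0, R+1)}$ at scale $\epsilon$. Partition $K$ into Borel pieces $K_1,\ldots,K_N$ with centers $a_j$ and diameters $\delta_j$, and select integers $n_j$ large enough that the enlarged disks $K_0^+ := \overline{D(0, R+\eta)}$ and $K_0^+ - n_j a_j$ ($j=1,\ldots,N$) are pairwise disjoint, and also $n_j \delta_j < \eta$. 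The complement in $\C$ of a finite disjoint union of closed disks is connected, so Runge's theorem furnishes polynomials $h_j$ with $|h_j(w)| < \epsilon 2^{-j}$ for $w \in K_0^+ \cup \bigcup_{i \ne j}(K_0^+ - n_i a_i)$ and $|h_j(w) - g(w + n_j a_j) + f_0(w)| < \epsilon 2^{-j}$ for $w \in K_0^+ - n_j a_j$. Setting $f := f_0 + \sum_j h_j$, one obtains $\|f - f_0\|_{K_0} \le \sum_j \epsilon 2^{-j} < \epsilon$; and for each $a \in K_j$ and $z \in K_0$ the point $w := z - n_j a$ lies in $K_0^+ - n_j a_j$ (since $|w - (z - n_j a_j)| = n_j|a - a_j| < \eta$), so the Runge bounds combined with the moduli of continuity of $f_0$ and $g$ give $T_a^{n_j} f(z) = f(w) = g(w + n_j a_j) + O(\epsilon) = g(z) + O(\epsilon)$.

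\textbf{The main obstacle and companion families.} The crux is the triple balance among the partition diameters $\delta_j$, the growth of the exponents $n_j$, and the Runge approximation precisions: these must be coordinated so that (i) the translated disks are pairwise disjoint (forcing $n_j$ to grow rapidly), (ii) the approximation errors sum to less than $\epsilon$, and (iii) replacing $a \in K_j$ by $a_j$ incurs only $O(\epsilon)$ error on $K_0$, which yields the side condition $n_j \delta_j < \eta$ and forces $\delta_j$ to shrink faster than $n_j$ grows. The key device is to run Runge's theorem on the enlargement $K_0^+$ rather than on $K_0$ itself: this substitutes the mere continuity of $f_0$ and $g$ for a derivative bound on the constructed function $f$. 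The companion families follow the same scheme with the Runge step suitably adapted. For $\{aT_1\}$, using $(aT_1)^n f(z) = a^n f(z-n)$, one Runge-approximates $a_j^{-n_j} g(w+n_j) - f_0(w)$ on the translates $K_0^+ - n_j$, and the side condition $n_j \delta_j/|a_j| < \eta$ controls both modulus and argument of $(a/a_j)^{n_j}$ for $a \in K_j$. For $\{aD\}$, using $(aD)^n f = a^n f^{(n)}$, one constructs $h_j$ of degree roughly $n_j$ whose $n_j$-th derivative approximates $a_j^{-n_j} g$ on $K_0^+$; the smallness of $h_j$ itself on $K_0$ is automatic for $n_j$ large because of the factor $z^{n_j}/n_j!$, while the smallness of $h_j^{(n_i)}$ for $i \ne j$ is ensured by taking the $n_j$'s spaced sufficiently far apart.
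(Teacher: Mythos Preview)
The paper does not prove Theorem~CS directly; it is quoted from \cite{cs} and then subsumed by the paper's own stronger results. The first two assertions follow from Theorem~\ref{t1}, proved by introducing the abstract notion of an operator group with the \emph{Runge property} (Definition~\ref{run}, Theorem~\ref{t1a}) and feeding it into the general common-universality criterion Corollary~\ref{gc4}; the third assertion follows from Corollary~\ref{co2}, proved via the eigenvector-family criterion of Theorems~\ref{t3a} and~\ref{t4}. In each case the paper first invokes Theorems~LM and~CMP to collapse the complex parameter to a single real one before any approximation argument is run. Your hands-on Runge construction is essentially the original Costakis--Sambarino strategy, hence a different and more concrete route than the paper's abstract criteria; what the abstraction buys is the genuinely two-parameter upgrade Theorem~\ref{t1} and the portability of Theorem~\ref{t3a}.

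There is, however, a real gap in your sketch as written. You take $K\subset\C^\star$ to be a compact annulus---a two-real-dimensional parameter set---partition it into finitely many pieces $K_1,\dots,K_N$ of diameters $\delta_j$, and then seek integers $n_j$ satisfying simultaneously $n_j\delta_j<\eta$ and pairwise disjointness of the disks $K_0^+-n_ja_j$. For small $\eta$ these demands are incompatible: covering $K$ needs on the order of $\mu_2(K)/\delta^2$ pieces of diameter $\delta$, while $n_j<\eta/\delta$ confines all the centers $n_ja_j$ to a disk of radius $O(\eta/\delta)$, in which at most $O\bigl((\eta/\delta)^2/(R+\eta)^2\bigr)$ points can be $2(R{+}\eta)$-separated; comparing forces $\eta^2\gtrsim (R+\eta)^2\mu_2(K)$. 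Equivalently, a two-dimensional cover requires $\sum_j\delta_j^2\gtrsim\mu_2(K)$, but $\delta_j<\eta/n_j$ with $n_j\to\infty$ makes $\sum_j\delta_j^2$ summable. This is exactly the obstruction the paper flags when it remarks that the existing criteria ``are basically not applicable to \dots\ families that are not smoothly labeled by one real parameter''. The simplest repair is to reduce to one real parameter before partitioning: Theorem~CMP gives $\uuu(T_a)=\uuu(T_{a/|a|})$, so $\uuu\{T_a:a\in\C^\star\}=\uuu\{T_z:z\in\T\}$, and Theorem~LM gives $\uuu(aT_1)=\uuu(|a|T_1)$ and $\uuu(aD)=\uuu(|a|D)$. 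On a one-dimensional $K$ only $\sum_j\delta_j\gtrsim\text{length}(K)$ is required, and with $\delta_j\sim\eta/n_j$ the divergence of the harmonic series lets your scheme go through. (The paper's own proof of the genuinely two-parameter Theorem~\ref{t1} sidesteps the obstruction not by a finer partition of the parameter set but by Lemma~\ref{tech}, which places the Runge nodes on concentric integer-radius circles so that angle and modulus are resolved simultaneously.)
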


The criteria by Bayart and Matheron were applied to various families
of operators including families of weighted translations on
$L^p(\R)$, composition operators on Hardy spaces $\H^p({\mathbb D})$
and backward weighted shifts on $\ell_p$. We would like to mention
just one example of the application of the criterion from \cite{bm},
which is related to our results.

\begin{exBM} As in Theorem~{\rm CS}, let $T_a$ be translation operators
on $\H(\C)$. For each $s\in\R_+$ and $z\in\T$, consider the family
$\F_{s,z}=\{n^sT_{nz}:n\in\Z_+\}$. Then
$$
\smash{\bigcap_{(s,z)\in\R_+\times \T} \uu(\F_{s,z})\ \ \ \text{is a
dense $G_\delta$-subset of $\H(\C)$.}}\vru
$$
\end{exBM}

Chan and Sanders \cite{cs} found common universal elements of
certain sets of families of backward weighted shifts on $\ell_2$.
Gallardo-Guti\'errez and Partington \cite{gp} proved a modification
of the Costakis--Sambarino criterion and applied it to obtain common
hypercyclic vectors for families of adjoint multipliers and
composition operators on Hardy spaces. Finally, we would like to
mention the following application by Costakis and Mavroudis
\cite{cm} of the Bayart--Matheron criterion.

\begin{thmCM}Let $D$ be the differentiation operator on $\H(\C)$ and
$p$ be a non-constant polynomial. Then $\uuu\{ap(D):a\in\C^\star\}$
is a dense $G_\delta$-set.
\end{thmCM}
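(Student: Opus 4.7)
The strategy is to reduce to a one-parameter real family by Theorem~LM and then apply a Costakis--Sambarino/Bayart--Matheron style covering criterion to the family of operators $rp(D)$. By Theorem~LM, $\uuu(ap(D))=\uuu(|a|p(D))$ for every $a\in\C^\star$, hence
\[
\uuu\{ap(D):a\in\C^\star\}=\bigcap_{r>0}\uuu(rp(D))=\bigcap_{n\in\N}\uuu\{rp(D):r\in[1/n,n]\}.
\]
Since a countable intersection of dense $G_\delta$-subsets of the Baire space $\H(\C)$ is itself dense $G_\delta$, it suffices to fix a compact interval $\Lambda=[\alpha,\beta]\subset(0,\infty)$ and prove that $\uuu\{rp(D):r\in\Lambda\}$ is a dense $G_\delta$-subset of $\H(\C)$.

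For the covering argument I would set up the following data. Factor $p(z)=c_m\prod_{j=1}^m(z-\mu_j)$ and introduce the right inverse $S_\mu f(z)=e^{\mu z}\int_0^z e^{-\mu\zeta}f(\zeta)\,d\zeta$ of $D-\mu I$. Because the operators $D-\mu_j$ commute, a telescoping cancellation shows that $R_r=(rc_m)^{-1}S_{\mu_1}\cdots S_{\mu_m}$ satisfies $rp(D)R_r=I$, and induction on $n$ yields the identities $(rp(D))^nR_r^n=I$, $(\mu p(D))^nR_r^N=(\mu/r)^nR_r^{N-n}$ for $n\leq N$, and $(\mu p(D))^nR_r^N=(\mu/r)^N(\mu p(D))^{n-N}$ for $n\geq N$. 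As the dense test set I would take $D_0=\spann\{e^{\lambda z}:|p(\lambda)|<1/\beta\}$; this is dense in $\H(\C)$ because the open set $\{\lambda:|p(\lambda)|<1/\beta\}$ is non-empty (it contains every root of $p$) and any span of exponentials whose frequencies accumulate in $\C$ is dense by a Hahn--Banach argument using the identity theorem. The two key quantitative inputs are the factorial decay $\sup_K|R_r^Nv|\leq C(v,K)^N/N!$ on every compact $K\subset\C$, uniformly in $r\in\Lambda$ (obtained by iterating the integral bound on $S_\mu$), and the geometric decay $|(\mu p(D))^ne^{\lambda z}|=|\mu p(\lambda)|^n|e^{\lambda z}|$ with $|\mu p(\lambda)|\leq\beta|p(\lambda)|<1$ for $\mu\in\Lambda$ and $e^{\lambda z}\in D_0$.

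With this data the standard Costakis--Sambarino bookkeeping goes through. Given targets $u,v\in D_0$ and accuracy $\epsilon>0$, one partitions $\Lambda$ into subintervals with $r_{j+1}/r_j$ so close to $1$ that $|(r/r_j)^{n_j}-1|<\epsilon$ on each, picks a rapidly growing sequence $n_1<n_2<\cdots<n_N$, and considers $x=u+\sum_{j=1}^N R_{r_j}^{n_j}v$. The main term $(rp(D))^{n_j}R_{r_j}^{n_j}v=(r/r_j)^{n_j}v$ approximates $v$ on the $j$-th subinterval; the cross terms with $k>j$ equal $(r/r_k)^{n_j}R_{r_k}^{n_k-n_j}v$ and are tamed by the factorial decay; and those with $k<j$ equal $(r/r_k)^{n_k}(rp(D))^{n_j-n_k}v$ and are tamed by the geometric decay $|rp(\lambda)|^{n_j-n_k}<1$ coming from $D_0$. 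The principal obstacle is balancing these two decay rates uniformly in $r\in\Lambda$: the $n_j$'s must grow fast enough to kill the factorial cross terms but not so fast that the geometric tail $(\mu/r_k)^{n_k}|\mu p(\lambda)|^{n_j-n_k}$ explodes. This is precisely where using exponentials with $|p(\lambda)|<1/\beta$ in $D_0$ matters, since for a polynomial $p$ with $p(0)\neq 0$ the operator $p(D)$ does not eventually annihilate polynomial test vectors, so the elementary asymptotic $D^Nq=0$ used in Theorem~CS for $D$ must be replaced by the eigenvalue estimate furnished by $D_0$.
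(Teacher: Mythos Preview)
Your route---reducing to real scalars via Theorem~LM and then running a Costakis--Sambarino/Bayart--Matheron covering argument with explicit right inverses $R_r$ of $rp(D)$---is essentially the original Costakis--Mavroudis proof, and with care it can be made to work. The paper, however, takes a genuinely different path: Theorem~CM is obtained as an immediate special case of Corollary~\ref{co2}, which covers \emph{every} $T\in L(\H(\C))$ commuting with $D$ and not a scalar multiple of $I$. That corollary is proved by noting that the exponentials $f(w)=e^{w\,\cdot}$ form a holomorphic family of eigenvectors of any such $T$ with some entire non-constant eigenvalue function $\phi$; Picard's theorem then gives $\inf|\phi|=0$ and $\sup|\phi|=\infty$, and the abstract eigenvector criterion Theorem~\ref{t4} (proved via Theorem~\ref{t3a} and ultimately Lemma~\ref{sm2}) applies directly. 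In Lemma~\ref{sm2} a small scalar multiple of a \emph{single} $T^k$-eigenvector already covers a parameter subinterval of fixed length, so no right inverses, no factorial estimates on $R_r^n$, and no cross-term bookkeeping ever appear. What your approach buys is concreteness and an explicit perturbation $x=u+\sum_j R_{r_j}^{n_j}v$; what the paper's approach buys is far greater generality (all of the commutant of $D$, not just $p(D)$) and the complete avoidance of exactly the balancing act you flag at the end---keeping $(r/r_j)^{n_j}$ close to $1$ while simultaneously forcing the tails $(r/r_k)^{n_k}|rp(\lambda)|^{n_j-n_k}$ small uniformly in $r\in\Lambda$, which in your sketch really requires either shrinking $D_0$ beyond $|p(\lambda)|<1/\beta$ or invoking the formal Bayart--Matheron hypotheses rather than the informal ``partition first, then pick rapidly growing $n_j$'' description.
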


Although the most of the mentioned criteria look quite general, they
are basically not applicable to finding common hypercyclic vectors
of families that are not smoothly labeled by {\bf one} real
parameter. Note that although the families in Theorems~AG, CS and CM
are formally speaking labeled by a complex parameter $a$, Theorem~LM
allows to reduce them to families labeled by one real parameter.
Example~BM is, of course, genuinely two-parametric, but it is not
about a common hypercyclic vector. On the other hand, one can
artificially produce huge families of operators with a common
hypercyclic vector. For example, take all operators for which a
given vector is hypercyclic. The following result provides a common
hypercyclic vector for a two-parametric family of operators. It
strengthens the first part of Theorem~CS and kind of improves
Example~BM.

\begin{theorem}\label{t1}Let $T_a$ for $a\in\C$ be the translation
operator $T_af(z)=f(z-a)$ acting on the complex Fr\'echet space
$\H(\C)$ of entire functions on one complex variable. Then
$\uuu\{bT_a:a,b\in\C^\star\}$ is a dense $G_\delta$-set.
\end{theorem}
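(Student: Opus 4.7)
The plan is to reduce the two--complex--parameter problem to a three--real--parameter one and then apply a Costakis--Sambarino--type sufficient condition for universality of multi-parameter families. First, apply Theorem~LM to each operator $rT_a$ with $r>0$ and $a\in\C^\star$: for every $z\in\T$ one has $\uuu(zrT_a)=\uuu(rT_a)$, whence
$$
\uuu\{bT_a:a,b\in\C^\star\}=\bigcap_{(a,r)\in\C^\star\times(0,\infty)}\uuu(rT_a),
$$
so the task reduces to showing that this intersection over the three--real--parameter family $\{rT_a\}$ is a dense $G_\delta$-subset of $\H(\C)$. Each individual $rT_a$ is hypercyclic (via Godefroy--Shapiro applied to the exponentials $e^{\lambda z}$, whose $rT_a$-eigenvalues $re^{-\lambda a}$ fill out all of $\C^\star$), so each $\uuu(rT_a)$ is a dense $G_\delta$; after writing $\C^\star\times(0,\infty)$ as a countable union of compact boxes the $G_\delta$-character of the full intersection is automatic, and density is the real content.

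For density, I would apply a suitable multi-parameter sufficient condition for common hypercyclicity. The universality sets to be shown dense have the form
$$
\Omega(K,p,m,\epsilon)=\bigl\{f\in\H(\C):\forall\,(a,r)\in K\ \exists N\geq m,\ \|r^N f(\cdot-Na)-p\|_m<\epsilon\bigr\},
$$
where $K$ ranges over a countable compact exhaustion of $\C^\star\times(0,\infty)$, $p$ runs through a countable dense set of polynomials, $m\in\N$, and $\epsilon>0$ is rational. The key construction is a Costakis--Sambarino-type series: partition $K$ into small cells around points $(a_k,r_k)$, choose a rapidly increasing sequence $N_k$, and produce
$$
f(z)=\sum_{k} r_k^{-N_k}\,p\bigl(z-N_k a_k\bigr)\,\chi_k(z),
$$
where $\chi_k\in\H(\C)$ is a Runge-type localizing factor close to $1$ on the disk $N_k a_k+\{|z|\leq m\}$ and very small on $N_j a_j+\{|z|\leq m\}$ for $j\neq k$. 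For parameters $(a,r)$ in the cell around $(a_k,r_k)$, the iterate $(rT_a)^{N_k}f$ extracts the $k$-th bump and approximates $p$, while the other bumps contribute negligibly.

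The main obstacle is controlling the unbounded scaling parameter $r$: within the $k$-th cell the spurious factor $(r/r_k)^{N_k}$ must stay bounded, which forces the cell diameter in the $r$-direction to be of order $1/N_k$; simultaneously $|p(z-N_ka)-p(z-N_k a_k)|$ must remain small on $|z|\leq m$, which forces the cell diameter in the $a$-direction to shrink at a comparable rate (adjusted for $\deg p$). Covering a three-dimensional compact box with cells of these shrinking sizes, while keeping the bumps mutually non-interfering on every fixed disk $|z|\leq m$ and keeping the resulting $f$ entire, is the technical heart of the proof and the point where the one--parameter Costakis--Sambarino scheme must be enhanced. Once the abstract multi-parameter criterion is in place, its verification for $\{rT_a\}$ reduces to these scale estimates plus standard entire-function approximation via Runge's theorem.
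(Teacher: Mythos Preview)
Your reduction is incomplete and, as a result, the covering step you postpone to ``the technical heart'' actually fails in the form you describe. Two points:

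\medskip
\textbf{(i) Missing CMP reduction.} You use Theorem~LM to replace $b\in\C^\star$ by $r>0$, but you do not use Theorem~CMP to reduce the translation parameter. For fixed $\theta\in\T$ and $\beta\in\R$, the family $S_t=e^{\beta t}T_{t\theta}$ ($t\geq0$) is a strongly continuous semigroup, so by CMP all $S_t$ with $t>0$ share the same hypercyclic vectors. This yields $\uuu(e^{\beta}T_a)=\uuu(e^{\beta/|a|}T_{a/|a|})$, whence
\[
\bigcap_{a\in\C^\star,\ r>0}\uuu(rT_a)=\bigcap_{\theta\in\T,\ \beta\in\R}\uuu(e^{\beta}T_\theta).
\]
The paper works with the right-hand family: a genuine \emph{two}-real-parameter problem ($\beta\in\R$, $\theta\in\T$), not your three-parameter one.

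\medskip
\textbf{(ii) The covering obstruction.} Your own estimate forces cell diameters of order $1/N_k$ in the $r$-direction and in each real coordinate of $a$. In $d$ real parameters the cells then have volume $\asymp N_k^{-d}$, and covering a fixed compact box with cells labelled by distinct (or near-distinct) iterates $N_k\geq N_1$ needs $\sum_{n\geq N_1}n^{-d}\gtrsim\text{vol}(K)$; for $d\geq2$ this sum is finite and tends to $0$ as $N_1\to\infty$, while $N_1$ must be large to keep the first bump inside the prescribed neighbourhood. So the naive Costakis--Sambarino grid cannot cover $K$ --- already for two parameters, let alone three. The paper's way around this (after the CMP reduction to $\T\times\R$) is a packing lemma: at radii between $nR$ and $(n{+}1)R$ one can place $\asymp n$ Runge nodes, pairwise $\geq c$ apart, whose angular positions are $\delta/n$-dense in $\T$. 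Thus the whole circular parameter $\T$ is handled by the nodes in a single radial layer, and only the scalar parameter $\beta$ remains, where the divergence of $\sum n^{-1}$ is exactly what is used. This geometric gain of a factor $n$ per layer is the missing idea in your outline.

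\medskip
A minor point: there are no entire ``localizing factors'' $\chi_k\in\H(\C)$ close to $1$ on one disk and very small on others; the correct device is a single application of Runge's theorem to a function prescribed on a disjoint union of far-apart disks (this is the paper's ``Runge property'').
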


A common hypercyclic vector from the above theorem is even more
monstrous than the holomorphic monsters provided by Theorem~CS.
Godefroy and Shapiro \cite{gosh} considered adjoint multiplication
operators on function Hilbert spaces. Recall that if $U$ is a
connected open subset of $\C^m$, then a {\it function Hilbert space}
$\H$ on $U$ is a Hilbert space consisting of functions $f:U\to \C$
holomorphic on $U$ such that for any $z\in U$ the evaluation
functional $\chi_z:\H\to\C$, $\chi_z(f)=f(z)$ is continuous. A {\it
multiplier} for $\H$ is a function $\phi:U\to\C$ such that $\phi
f\in \H$ for each $f\in \H$. It is well-known \cite{gosh} that any
multiplier is bounded and holomorphic. Each multiplier gives rise to
the multiplication operator $M_\phi\in L(\H)$, $M_\phi f=\phi f$
(continuity of $M_\phi$ follows from the Banach closed graph
theorem). Its Hilbert space adjoint $M_\phi^\star$ is called an {\it
adjoint multiplication operator}. Godefroy and Shapiro proved that
there is $f\in \H$, which is cyclic for $M_\phi^\star$ for any
non-constant multiplier $\phi$ for $\H$ and  demonstrated that if
$\phi:U\to\C$ is a non-constant multiplier for $\H$ and $\phi(U)\cap
\T\neq\varnothing$, then $M_\phi^\star$ is hypercyclic, see also the
related paper by Bourdon and Shapiro \cite{bosh}. Godefroy and
Shapiro also raised the following question \cite[p.~263]{gosh}.

\begin{qGS} Let $\H$ be a Hilbert function space on a connected open
subset $U$ of $\C^m$. Does the family of all hypercyclic adjoint
multiplications on $\H$ have a common hypercyclic vector?
\end{qGS}

Recall that any $T\in L(\H(\C))$ such that $T$ is not a scalar
multiple of the identity and $TD=DT$ is hypercyclic. The following
question was raised by Richard Aron.

\begin{qA} Let ${\cal D}$ be the family of all continuous linear
operators on $\H(\C)$, which are not scalar multiples of the
identity and which commute with the differentiation operator $D$. Is
it true that there is a common hypercyclic vector for all operators
from the family $\cal D$?
\end{qA}

The next result allows us to answer negatively both of the above
questions.

\begin{theorem}\label{t2} Let $X$ be a complex
topological vector space such that $X^*\neq \{0\}$, $T\in L(X)$ and
$\Lambda$ be a subset of $\R_+\times \C$. Assume also that the
family $\{aT+bI:(a,b)\in\Lambda\}$ has a common hypercyclic vector.
Then the set $\Lambda$ has zero three-dimensional Lebesgue measure.
\end{theorem}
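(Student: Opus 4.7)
The plan is to reduce to a scalar polynomial statement via duality and then establish a quantitative sublevel-set bound. Pick any nonzero $g\in X^*$ (exists by hypothesis) and any $(a_0,b_0)\in\Lambda$ (nothing to prove if $\Lambda$ is empty). Since $x$ is hypercyclic for $a_0T+b_0I$, its orbit is dense in $X$ and $g$ is nonzero on this orbit, so some $n_0\in\Z_+$ satisfies $g((a_0T+b_0I)^{n_0}x)\ne 0$; setting $f(y)=g((a_0T+b_0I)^{n_0}y)$ gives $f\in X^*$ with $c_0:=f(x)\ne 0$. With $c_k:=f(T^kx)$, the binomial theorem produces
$$
\phi_n(a,b):=f\bigl((aT+bI)^nx\bigr)=\sum_{k=0}^n\binom{n}{k}a^kb^{n-k}c_k,
$$
a polynomial of degree $\le n$ in $(a,b)$, holomorphic in $b$, of degree exactly $n$ in $b$ with leading coefficient $c_0\ne 0$.

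For every $(a,b)\in\Lambda$ the orbit $\{(aT+bI)^nx\}_n$ is dense in $X$; applying the continuous nonzero $f$ yields that $\{\phi_n(a,b):n\in\Z_+\}$ is dense in $\C$. Setting $E_n(\epsilon)=\{(a,b)\in\R_+\times\C:|\phi_n(a,b)|<\epsilon\}$, density gives $\Lambda\subseteq\limsup_n E_n(\epsilon)$ for every $\epsilon>0$. Suppose for contradiction that $\Lambda$ has positive three-dimensional Lebesgue measure. I would aim to prove, for some small $\epsilon>0$ and every compact $K\subset\R_+\times\C$, the summable estimate $\sum_n|K\cap E_n(\epsilon)|<\infty$; then the first Borel--Cantelli lemma gives $|K\cap\limsup_nE_n(\epsilon)|=0$, whence $|K\cap\Lambda|=0$, contradicting the assumption.

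The main obstacle is this quantitative summability. A one-dimensional Cartan/Remez inequality applied to $b\mapsto\phi_n(a,b)$ (degree-$n$ polynomial with leading coefficient $c_0$) controls the $b$-section of $E_n(\epsilon)$ only by an $O(1)$ area bound uniformly in $n$; integrating in $a$ gives an $O(1)$ bound per $n$, which is not summable. To squeeze out the needed decay one should combine the $a$- and $b$-directions together with the fact that the orbit must approach \emph{every} point of $\C$, not merely $0$. The explicit differential identities $\partial_b\phi_n=n\phi_{n-1}$ and $\partial_a\phi_n=nf((aT+bI)^{n-1}Tx)$ permit passing to the joint map $(a,b)\mapsto(\phi_n(a,b),\phi_{n+1}(a,b))$ from $\R^3$ to $\C^2\cong\R^4$, whose Jacobian determinants are controlled by Hankel-like expressions in the $c_k$. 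A Sard-type analysis of its critical set, together with simultaneous Remez bounds against two linearly independent targets $y_1,y_2\in\C$, should furnish the required summable estimate and thereby close the argument. I expect this joint sublevel-set estimate to be the technical heart of the proof; the reduction to the scalar polynomial setting and the Borel--Cantelli step are routine.
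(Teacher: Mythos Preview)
Your overall architecture---evaluate the orbit against a nonzero functional, reduce to polynomial sublevel sets, and close with Borel--Cantelli---is exactly the paper's, and you correctly identify the real obstacle: a single Cartan/Remez bound on $b\mapsto\phi_n(a,b)$ gives only an $O(1)$ area, which is not summable. But the remedy you propose (a Sard-type analysis of the joint map $(\phi_n,\phi_{n+1})$ together with ``simultaneous Remez bounds'') is only a sketch, and it is not clear how to extract any power of $n$ from it; as written this is a genuine gap.

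The paper supplies two concrete ideas you are missing. First, reparametrize: write $aT+bI=e^{\alpha}(T+\beta I)$ with $(\alpha,\beta)\in\R\times\C$; this is a diffeomorphism of $\R_+\times\C$, so measure zero is preserved, and now $\phi_n(\alpha,\beta)=e^{\alpha n}p_n(\beta)$ with $p_n(\beta)=f((T+\beta I)^n x)$ monic of degree $n$ in $\beta$ alone. The $\alpha$-variable then decouples completely: the condition $1<|e^{\alpha n}p_n(\beta)|<e$ cuts out an $\alpha$-interval of length exactly $1/n$ for each fixed $\beta$. Second, for the $\beta$-direction, use \emph{three} consecutive polynomials rather than two. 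Density of the orbit of $x$ under $e^\alpha(T+\beta I)$ (together with linear independence of $f,T^*f,\dots$) forces the vector $e^{\alpha n}(p_n(\beta),p_{n-1}(\beta),p_{n-2}(\beta))$ to visit any open set in $\C^3$ infinitely often; in particular one may impose $|p_{n-1}/p_n|<1$ and $|p_{n-2}/p_n|>8$. The differential relations $p_n'=np_{n-1}$ give
\[
\Bigl(\frac{p_n'}{p_n}\Bigr)'=n^2\Bigl(\bigl(1-\tfrac1n\bigr)\frac{p_{n-2}}{p_n}-\Bigl(\frac{p_{n-1}}{p_n}\Bigr)^2\Bigr),
\]
so on that set $\bigl|(p_n'/p_n)'\bigr|\geq 3n^2$. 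But $(p_n'/p_n)'=-\sum_{j=1}^n(\beta-z_j)^{-2}$, and the Macintyre--Fuchs inequality (a sharpened Cartan lemma for $\sum|\beta-z_j|^{-2}$) shows that the set where this sum exceeds $3n^2$ is covered by $n$ disks of total area $O(n^{-2/3})$. Multiplying by the $1/n$ from the $\alpha$-direction gives $\mu_3(C_n)=O(n^{-5/3})$, which is summable.

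So the missing technical heart is not a generic Sard argument but the specific combination: decouple $\alpha$ by reparametrization, pass to the second logarithmic derivative via three consecutive $p_n$, and invoke Macintyre--Fuchs.
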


\begin{corollary}\label{c1} The family $\{aD+bI:a>0,\ b\in\C\}$ of
continuous linear operators on $\H(\C)$ does not have a common
hypercyclic vector.
\end{corollary}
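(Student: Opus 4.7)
The plan is to deduce Corollary \ref{c1} immediately from Theorem \ref{t2} by choosing the right data and checking that the parameter set fails the measure-zero conclusion.

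First I would set $X=\H(\C)$ and $T=D$. I need to verify the hypotheses of Theorem~\ref{t2}: $\H(\C)$ is a complex Fr\'echet space, hence a complex topological vector space, and it is locally convex with many continuous linear functionals (for example the evaluations $\chi_z(f)=f(z)$), so $X^*\neq\{0\}$; the differentiation operator $D$ is a well-known element of $L(\H(\C))$. Thus Theorem~\ref{t2} applies to any subfamily of $\{aD+bI:(a,b)\in\R_+\times\C\}$.

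Next I would specialize the index set to $\Lambda=(0,\infty)\times\C\subseteq\R_+\times\C$, which is exactly the indexing set of the family appearing in the corollary. Identifying $\C$ with $\R^2$, the set $\Lambda$ is the open half-space $\{(a,x,y)\in\R^3:a>0\}$, which has infinite three-dimensional Lebesgue measure and in particular is not of zero three-dimensional Lebesgue measure.

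Finally I would argue by contradiction: if the family $\{aD+bI:a>0,\ b\in\C\}$ admitted a common hypercyclic vector, then Theorem~\ref{t2} would force $\Lambda$ to have zero three-dimensional Lebesgue measure, contradicting the computation just made. Hence no common hypercyclic vector exists. There is no genuine obstacle here, since Theorem~\ref{t2} has already been stated and does all the work; the only thing to be careful about is that the hypothesis $X^*\neq\{0\}$ is genuinely needed and must be checked, which is trivial for the Fr\'echet space $\H(\C)$.
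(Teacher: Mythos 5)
Your proposal is correct and follows exactly the route taken in the paper: apply Theorem~\ref{t2} with $X=\H(\C)$, $T=D$, and $\Lambda=(0,\infty)\times\C$, observing that this parameter set is a non-empty open subset of $\R_+\times\C$ and hence has non-zero three-dimensional Lebesgue measure, which contradicts the conclusion of Theorem~\ref{t2} if a common hypercyclic vector existed.
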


\begin{corollary}\label{c2} Let $\H$ be a Hilbert function space on a
connected open subset $U$ of $\C^m$ and $\phi$ be a non-constant
multiplier for $\H$. Then the family
$\{M_{\overline{b}+a\phi}^\star:a>0,\ b\in\C,\
(\overline{b}+a\phi)(U)\cap\T\neq\varnothing\}$ of hypercyclic
operators does not have a common hypercyclic vector.
\end{corollary}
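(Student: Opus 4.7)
The plan is to reduce Corollary~\ref{c2} to Theorem~\ref{t2} applied with $X=\H$ and $T=M_\phi^\star$; note that $\H^*\neq\{0\}$ since $\H$ is a non-zero Hilbert space. The first step is the operator identity
$$
M^\star_{\overline b+a\phi}=bI+aM_\phi^\star=aT+bI\qquad(a>0,\ b\in\C),
$$
which follows because the Hilbert space adjoint of multiplication by a scalar $c$ is multiplication by $\overline c$, combined with conjugate-linearity of the adjoint and the fact that $\overline a=a$ for $a\in\R_+$. Consequently, the family in Corollary~\ref{c2} is exactly $\{aT+bI:(a,b)\in\Lambda\}$ with
$$
\Lambda=\{(a,b)\in\R_+\times\C:(\overline b+a\phi)(U)\cap\T\neq\varnothing\},
$$
and by Theorem~\ref{t2} it suffices to show that $\Lambda$ has positive three-dimensional Lebesgue measure.

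I will do this by exhibiting a non-empty open subset of $\R_+\times\C$ contained in $\Lambda$. Since $\phi$ is non-constant, fix $z_0\in U$ with $c:=\phi(z_0)\neq0$. For any $a_0>0$ pick $b_0\in\C$ with $\overline{b_0}+a_0c\in\T$; then $(a_0,b_0)\in\Lambda$, so $\Lambda$ is non-empty. To obtain openness near $(a_0,b_0)$ I will use that a non-constant holomorphic function on a connected open subset of $\C^m$ is an open mapping, established point-wise by restricting $\phi$ to a complex line through the point on which $\phi$ is non-constant and invoking the classical one-variable open mapping theorem. In particular, there exist a neighborhood $W\subseteq U$ of $z_0$ and a $\delta>0$ such that $\phi(W)$ contains the open disc of radius $2\delta$ centred at $c$. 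For $(a,b)$ in a sufficiently small neighborhood of $(a_0,b_0)$ the set $a\phi(W)+\overline b$ then contains the open disc of radius $2a\delta$ centred at $ac+\overline b$, whose centre has modulus arbitrarily close to $1$; so this disc meets $\T$, and pulling back through $\phi$ yields $z\in W$ with $|a\phi(z)+\overline b|=1$, that is, $(a,b)\in\Lambda$.

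The only non-trivial step is the openness of $\phi$ in the several-variables setting, which is not a word-for-word quotation of the standard open mapping theorem but reduces to it along a generic complex line through $z_0$; everything else is a routine continuity/perturbation estimate. Once the interior point $(a_0,b_0)$ of $\Lambda$ is exhibited, the three-dimensional Lebesgue measure of $\Lambda$ is positive, so Theorem~\ref{t2} immediately rules out any common hypercyclic vector for the stated family, completing the proof of Corollary~\ref{c2}.
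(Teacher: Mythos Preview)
Your proof is correct and follows the same route as the paper: identify $M^\star_{\overline b+a\phi}$ with $aM_\phi^\star+bI$ and then check that the parameter set $\Lambda\subset\R_+\times\C$ has positive three-dimensional Lebesgue measure so that Theorem~\ref{t2} applies. The paper's own argument is the one-line remark that $\Lambda$ is a non-empty open subset of $\R_+\times\C$; you have simply supplied the details (openness of $\phi$ via reduction to a complex line, which the paper records separately as Lemma~\ref{com1}, and a continuity estimate to produce an interior point of $\Lambda$), and the assumption $c=\phi(z_0)\neq0$ is harmless though not actually needed for your perturbation step.
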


Corollaries~\ref{c1} and~\ref{c2} follow from Theorem~\ref{t2}
because $M_{\overline{b}+a\phi}^\star=aM_\phi^\star+bI$ and the sets
of pairs $(a,b)$ involved in the definition of the families in
Corollaries~\ref{c1} and~\ref{c2} are non-empty open subsets of
$\R_+\times\C$ and therefore have non-zero 3-dimensional Lebesgue
measure. In fact, Theorem~\ref{t2} shows that even relatively small
subfamilies of the families from Questions~GS and~A fail to have
common hypercyclic vectors. As usual, $\H^2(\D)$ is the Hardy space
of the unit disk. It is well-known that $\H^2(\D)$ is a function
Hilbert space on $\D$ and the set of multipliers for $\H^2(\D)$ is
the space $\H^\infty(\D)$ of bounded holomorphic functions
$f:\D\to\C$. Let $\phi\in\H^\infty(\D)$ be non-constant. Using the
mentioned criterion by Godefroy and Shapiro for hypercyclicity of
adjoint multiplications together with the fact that a contraction or
its inverse can not be hypercyclic, we see that
$zM_\phi^\star=M_{\overline{z}\phi}^\star$ is hypercyclic if and
only if $b^{-1}<|z|<a^{-1}$, where $a=\inf\limits_{z\in
\D}|\phi(z)|$ and $b=\sup\limits_{z\in \D}|\phi(z)|$. Probably,
expecting the answer to Question~GS to be negative, Bayart and
Grivaux \cite{bagr} raised the following question.

\begin{qBG} Let $\phi\in \H^\infty(\D)$ be non-constant, $a=\inf\limits_{z\in
\D}|\phi(z)|$ and $b=\sup\limits_{z\in \D}|\phi(z)|$. Is it true
that the family $\{zM_\phi^\star:b^{-1}<|z|<a^{-1}\}$ has common
hypercyclic vectors?
\end{qBG}

We prove a sufficient condition on a family of scalar multiples of a
given operator to have a common hypercyclic vector and use it to
answer Question~BG affirmatively. It is worth noting that
Gallardo-Guti\'errez and Partington \cite{gp} found a partial
affirmative answer to the above question.

\begin{theorem}\label{t3a} Let $X$ be a separable complex $\F$-space,
$T\in L(X)$ and $0\leq a<b\leq\infty$. Assume also that there is a
map $(k,c)\mapsto F_{k,c}$ sending a pair $(k,c)\in\N\times (a,b)$
to a subset $F_{k,c}$ of $X$ satisfying the following properties:
\begin{itemize}\itemsep=-2pt
\item[\rm(\ref{t3a}.1)]$F_{k,c}\subseteq \bigcup\limits_{w\in\T}\ker(T^k-wc^kI)$
for each $(k,c)\in\N\times(a,b);$
\item[\rm(\ref{t3a}.2)]$\{c\in(a,b):F_{k,c}\cap V\neq\varnothing\}$ is open in
$(a,b)$ for any open subset $V$ of $X$ and $k\in\N;$
\item[\rm(\ref{t3a}.3)]$F_c=\bigcup\limits_{k=1}^\infty F_{k,c}$ is dense in $X$
for any $c\in(a,b);$
\item[\rm(\ref{t3a}.4)]For any $k_1,\dots,k_n\in\N$,
there is $k\in\N$ such that $\smash{\bigcup\limits_{j=1}^n
F_{k_j,c}\subseteq F_{k,c}}$ for each $c\in(a,b)$.
\end{itemize}
Then $\uuu\{zT:b^{-1}<|z|<a^{-1}\}$ is a dense $G_\delta$-set.
\end{theorem}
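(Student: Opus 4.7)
The plan is to reduce the two-real-parameter family $\{zT:b^{-1}<|z|<a^{-1}\}$ to a one-real-parameter family via Theorem~LM and then establish common hypercyclicity by a Baire category argument driven by hypotheses (\ref{t3a}.1)--(\ref{t3a}.4). Applying Theorem~LM with the operator $cT$ in the role of $T$ for each $c\in(b^{-1},a^{-1})$ gives $\uuu\{z(cT):z\in\T\}=\uuu(cT)$, whence
\[
\uuu\{zT:b^{-1}<|z|<a^{-1}\}=\bigcap_{c\in(b^{-1},a^{-1})}\uuu(cT),
\]
reducing the problem to the one-parameter family $\{cT:c\in(b^{-1},a^{-1})\}$.

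Since $X$ is separable metrizable, I would fix a countable base $\{V_n\}$ of non-empty open subsets of $X$ and write $(b^{-1},a^{-1})$ as the union of a sequence $\{K_j\}$ of compact sub-intervals. For open $V\subseteq X$ and compact $K\subset(b^{-1},a^{-1})$, set
\[
\Omega(V,K)=\{x\in X:\forall c\in K,\ \exists m\in\Z_+,\ (cT)^mx\in V\}.
\]
The target set equals $\bigcap_{n,j}\Omega(V_n,K_j)$. A standard compactness-and-continuity argument (for $x_0\in\Omega(V,K)$, pick $m(c)$ with $(cT)^{m(c)}x_0\in V$, extend to a neighborhood of $(c,x_0)$ by joint continuity of $(c,x)\mapsto(cT)^{m(c)}x$, and use compactness of $K$ to extract a finite subcover) shows each $\Omega(V,K)$ is open; by Baire's theorem it then suffices to show that each $\Omega(V,K)$ is dense.

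Density is the main obstacle. Given non-empty open $U,V\subseteq X$ and compact $K\subset(b^{-1},a^{-1})$, I would produce $x\in U\cap\Omega(V,K)$ as follows. The pivotal observation supplied by (\ref{t3a}.1) is that for $c\in K$ one has $1/c\in(a,b)$, and any $f\in F_{k,1/c}$ satisfies $T^kf=wc^{-k}f$ for some $w\in\T$, hence
\[
(cT)^kf=wf\quad\text{and}\quad(cT)^{jk}f=w^jf.
\]
So $f$ is a ``unimodular eigenvector'' for $(cT)^k$, and the phase $w\in\T$ left free by (\ref{t3a}.1) is absorbed by a Theorem~LM-style argument applied to the action of $(cT)^k$ on spans of such $f$'s. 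I would then partition $K$ into finitely many small sub-intervals $I_1,\ldots,I_N$ with centres $c_i$; use (\ref{t3a}.3) to approximate a prescribed target of $V$ by elements of $F_{1/c_i}=\bigcup_k F_{k,1/c_i}$; use (\ref{t3a}.4) to replace the resulting $k_i$'s with a single common $k$; and use (\ref{t3a}.2) to ensure the eigenvector representatives can be picked so the approximation stays robust as $c$ varies within $I_i$. The candidate common hypercyclic vector is then assembled as $x=x_0+\sum_{i=1}^N\varepsilon_if_i$ with $x_0\in U$ and $f_i\in F_{k,1/c_i}$.

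The technical heart of the density step is magnitude control. The calculation $(cT)^mf_j=w_j^{m/k}(c/c_j)^mf_j$, valid for $m$ a multiple of $k$, shows that when $i\ne j$ the factor $(c/c_j)^{m_i}$ grows or decays geometrically in $m_i$, so the scalars $\varepsilon_j$ and exponents $m_i$ must be chosen inductively to guarantee that for every $c\in I_i$ the $i$-th term lands near the target in $V$ while contributions indexed by $j\ne i$ remain negligible. This balancing, carried out with sufficient uniformity in $c$ (the role of (\ref{t3a}.2)), is the most delicate part of the argument. Once each $\Omega(V_n,K_j)$ is shown to be dense and open, Baire's theorem applied to the countable collection $\{\Omega(V_n,K_j)\}$ yields that $\uuu\{zT:b^{-1}<|z|<a^{-1}\}$ is a dense $G_\delta$-set, completing the proof.
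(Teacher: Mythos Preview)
Your reduction via Theorem~LM to the one-real-parameter family $\{cT:c\in(b^{-1},a^{-1})\}$ and the Baire scaffolding with the sets $\Omega(V,K)$ are correct and mirror the paper's Proposition~\ref{gc} and Corollary~\ref{gc2}. The openness of $\Omega(V,K)$ is fine.

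The density step has a structural gap. You place $f_i\in F_{k,1/c_i}$ at the \emph{centre} $c_i$ of $I_i$; by (\ref{t3a}.1) this gives $(c_iT)^kf_i=w_if_i$ with $|w_i|=1$, so the orbit of $f_i$ under $c_iT$ is a pure rotation. Hence $\|(cT)^{m_i}(\varepsilon_if_i)\|\approx\varepsilon_i\|f_i\|$ for $c$ near $c_i$, and you cannot simultaneously make $\varepsilon_if_i$ small (so that $x=x_0+\sum\varepsilon_if_i\in U$) and make $(cT)^{m_i}(\varepsilon_if_i)$ land near the target in $V$. For $c$ in the left half of $I_i$ the orbit even decays, so no exponent helps there. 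You also leave $(cT)^{m_i}x_0$ uncontrolled: with $x_0$ arbitrary in $U$ this term can be anything.

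The paper resolves both issues. In Lemma~\ref{sm2} the eigenvector $x$ is taken at a parameter $\alpha$ strictly to one side of the target interval $[\alpha+\delta,\alpha+2\delta]$, so that for every $t$ in that interval the orbit $(e^tT)^nx$ grows geometrically; the single small vector $u_p=e^{-2\delta kp}x$ then covers the whole interval, using not one exponent but the range $\Lambda_p=\{pk,(p+1)k,\dots,2pk\}$. The base point is taken from the dense set $E=\{x:e^{\beta n}T^nx\to0\}$ (dense by (\ref{t3}.1), which your hypotheses imply via the compactness argument you sketch with (\ref{t3a}.2)--(\ref{t3a}.4)), so that its contribution vanishes uniformly; this is condition (\ref{gc3}.1). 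The inductive assembly is Proposition~\ref{gc3}. Your use of (\ref{t3a}.2)--(\ref{t3a}.4) to produce a single $k$ working for a whole compact $J$ is exactly right and is the content of the paper's short derivation of Theorem~\ref{t3a} from Theorem~\ref{t3}; what is missing is the growth-from-one-side mechanism of Lemma~\ref{sm2}.
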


Note that (\ref{t3a}.1) is satisfied if $F_{k,c}\subseteq
\ker(T^k-c^kI)$, which is the case in all following applications of
Theorem~\ref{t3a}. If $X$ is a complex locally convex topological
vector space and $U$ is a non-empty open subset of $\C^m$, then we
say that $f:U\to X$ is {\it holomorphic} if $f$ is continuous and
for each $g\in X^*$, $g\circ f:U\to \C$ is holomorphic.

\begin{theorem}\label{t4} Let $m\in\N$, $X$ be a complex
Fr\'echet space, $T\in L(X)$ and $U$ be a connected open subset of
$\C^m$. Assume also that there exist holomorphic maps $f:U\to X$ and
$\phi:U\to\C$ such that $\phi$ is non-constant, $Tf(z)=\phi(z)f(z)$
for each $z\in U$ and $\spann\{f(z):z\in U\}$ is dense in $X$.
Denote $a=\inf\limits_{z\in U}|\phi(z)|$ and $b=\sup\limits_{z\in
U}|\phi(z)|$. Then $\uuu\{zT:b^{-1}<|z|<a^{-1}\}$ is a dense
$G_\delta$-set.
\end{theorem}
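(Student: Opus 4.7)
The plan is to apply Theorem~\ref{t3a} with $F_{k,c}$ consisting of finite linear combinations of the eigenvectors $f(z)$ whose eigenvalues have the right modulus and agree after raising to the $k$-th power. Specifically, for $k\in\N$ and $c\in(a,b)$ set
$$
F_{k,c}=\Bigl\{\sum_{j=1}^n\alpha_jf(z_j):n\in\N,\ \alpha_j\in\C,\ z_j\in U,\ |\phi(z_1)|=c,\ \phi(z_j)^k=\phi(z_1)^k\ \text{for all }j\Bigr\}.
$$
Condition~(\ref{t3a}.1) is immediate, since $T^kf(z)=\phi(z)^kf(z)$ implies $T^kv=\phi(z_1)^kv=wc^kv$ for $v\in F_{k,c}$ with $w=\phi(z_1)^k/c^k\in\T$. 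Condition~(\ref{t3a}.4) holds with $k$ the least common multiple of the given $k_j$, because if $\phi(z_i)^{k_j}$ is independent of $i$, so is its $k/k_j$-th power. For~(\ref{t3a}.2), note that $\phi$ is an open map as a non-constant holomorphic function on the connected domain $U$; given $v=\sum\alpha_jf(z_j)\in F_{k,c_0}\cap V$, for $c$ sufficiently close to $c_0$ one finds $z_j'\in U$ near $z_j$ with $\phi(z_j')=(c/c_0)\phi(z_j)$, which produces $\sum\alpha_jf(z_j')\in F_{k,c}$, still inside $V$ by continuity of $f$.

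The substance of the argument is in~(\ref{t3a}.3). First I would show that $S_c=\spann\{f(z):|\phi(z)|=c\}$ is dense in $X$. By Hahn--Banach and the density of $\spann\{f(z):z\in U\}$, it suffices to prove that any $g\in X^*$ with $g(f(z))=0$ on the level set $\{|\phi|=c\}$ has $g\circ f\equiv 0$ on $U$. The critical set of $\phi$ is a proper analytic subvariety of $U$ and $\phi(U)$ is open, so the circle $|w|=c$ contains some value $\phi(z_0)$ with $d\phi(z_0)\neq 0$. By the implicit function theorem, $\phi$ can be used as one of the local holomorphic coordinates near $z_0$, turning the holomorphic function $g\circ f$ into $h(w_1,\ldots,w_m)$ that vanishes whenever $|w_1|=c$. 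For each fixed $(w_2,\ldots,w_m)$, the one-variable slice $w_1\mapsto h(w_1,\ldots,w_m)$ vanishes on a full circle and hence identically, forcing $h\equiv 0$ in a polydisc neighborhood; the identity principle on the connected set $U$ then yields $g\circ f\equiv 0$. To finish~(\ref{t3a}.3), approximate an arbitrary $\sum\alpha_jf(z_j)\in S_c$ by vectors in $F_c$ as follows: writing $\phi(z_j)=ce^{i\theta_j}$, for any $\epsilon>0$ pick $k\in\N$ large enough that the $k$-th roots of unity are $\epsilon$-dense in $\T$ and choose such roots $\eta_j$ with $|\eta_j-e^{i\theta_j}|<\epsilon$; by the open mapping property there exist $z_j'\in U$ close to $z_j$ with $\phi(z_j')=c\eta_j$, so $\phi(z_j')^k=c^k$ is common and $\sum\alpha_jf(z_j')\in F_{k,c}\subseteq F_c$ is arbitrarily close to the original vector by continuity of $f$.

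The main obstacle is the density of $S_c$, because the level set $\{z\in U:|\phi(z)|=c\}$ is only a real hypersurface rather than a complex analytic subvariety, so vanishing of the holomorphic function $g\circ f$ on it does not follow from the complex identity principle directly. The key device is to use $\phi$ itself as a local holomorphic coordinate near a regular point, after which the vanishing of $g\circ f$ on a product of a circle and a transversal neighborhood reduces to the standard one-variable identity theorem. The remaining steps are straightforward consequences of the open mapping property of non-constant holomorphic maps and the density of $k$-th roots of unity in $\T$ as $k\to\infty$.
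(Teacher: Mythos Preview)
Your proof is correct and follows the paper's strategy of applying Theorem~\ref{t3a} to sets $F_{k,c}$ built from the eigenvectors $f(z)$; the verifications of (\ref{t3a}.1), (\ref{t3a}.2) and (\ref{t3a}.4) are essentially identical to the paper's (the paper uses the slightly smaller $F_{k,c}=\spann\{f(z):\phi(z)^k=c^k\}$ and the product $k_1\cdots k_n$ in place of the lcm, but this makes no difference). The only substantive divergence is in (\ref{t3a}.3). The paper bypasses your intermediate space $S_c$ and proves directly that $F_c=\spann\{f(z):\phi(z)\in c\G\}$ (with $\G$ the group of roots of unity) is dense, via a separate uniqueness lemma (Lemma~\ref{com2}): if $A\subset\C$ has a limit point in $\phi(U)$ then $\phi^{-1}(A)$ is a set of uniqueness, proved by restricting both $\phi$ and $g\circ f$ to complex affine lines through a chosen point and invoking the one-variable identity theorem. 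Your implicit-function-theorem route at a regular point of $\phi$ is equally valid, but two small points deserve care: (i) in your local chart the set $\{|w_1|=c\}$ is only an \emph{arc}, not a full circle---still enough for the one-variable identity theorem; (ii) the claim that the level circle $|w|=c$ contains a regular value of $\phi$ needs the observation that $d\phi\equiv 0$ on the critical set forces $\phi$ to be constant on each of its (countably many) irreducible components, so the set of critical values is countable and cannot exhaust the open arc $c\T\cap\phi(U)$. The paper's complex-line argument sidesteps both issues and packages the uniqueness statement as a reusable lemma; your approach is more hands-on but avoids introducing an auxiliary result.
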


\begin{corollary}\label{co1} Let $m\in\N$, $U$ be connected non-empty
open subset of $\C^m$, $\H$ be a function Hilbert space on $U$ and
$\phi$ be a non-constant multiplier for $\H$, $a=\inf\limits_{z\in
U}|\phi(z)|$ and $b=\sup\limits_{z\in U}|\phi(z)|$. Then
$\uuu\{zT:b^{-1}<|z|<a^{-1}\}$ is a dense $G_\delta$-set.
\end{corollary}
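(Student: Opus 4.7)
The plan is to apply Theorem~\ref{t4} with $X = \H$ and $T = M_\phi^\star$, exhibiting the required holomorphic family of eigenvectors by using reproducing kernels. For each $z \in U$, let $k_z \in \H$ denote the reproducing kernel, characterized by $\langle f, k_z \rangle = f(z)$ for every $f \in \H$. A direct computation with the adjoint gives
\[
\langle f, M_\phi^\star k_z\rangle = \langle M_\phi f, k_z\rangle = \phi(z) f(z) = \langle f, \overline{\phi(z)}\, k_z\rangle,
\]
so $M_\phi^\star k_z = \overline{\phi(z)}\, k_z$. Density of $\spann\{k_z : z \in U\}$ in $\H$ is immediate: any $f \in \H$ orthogonal to every $k_z$ satisfies $f(z) = \langle f, k_z\rangle = 0$ on $U$, hence $f = 0$.

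The only substantive issue is a holomorphy mismatch. Since the inner product is conjugate-linear in the second slot, the identity $\langle f, k_z \rangle = f(z)$ forces $z \mapsto k_z$ to depend \emph{antiholomorphically} on $z$, and the eigenvalue $\overline{\phi(z)}$ is antiholomorphic as well. I would repair this by passing to the conjugate domain $U^\sharp := \{\overline w : w \in U\}$, which is again a connected open subset of $\C^m$, and setting
\[
\widetilde f : U^\sharp \to \H,\quad \widetilde f(w) = k_{\overline w},\qquad \widetilde\phi : U^\sharp \to \C,\quad \widetilde\phi(w) = \overline{\phi(\overline w)}.
\]
Each of $\widetilde f$ and $\widetilde\phi$ is now holomorphic (each is a composition of an antiholomorphic map with complex conjugation), $\widetilde\phi$ is non-constant because $\phi$ is, and the eigenvalue equation reads $M_\phi^\star \widetilde f(w) = \widetilde \phi(w)\, \widetilde f(w)$. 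Since $|\widetilde \phi(w)| = |\phi(\overline w)|$, we obtain $\inf_{w \in U^\sharp} |\widetilde \phi(w)| = a$ and $\sup_{w \in U^\sharp}|\widetilde \phi(w)| = b$.

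It remains to verify that $\widetilde f$ is continuous (needed for holomorphy in the sense of the paper), which follows from the standard estimate $\|k_z - k_{z'}\|^2 = k_z(z) - k_z(z') - k_{z'}(z) + k_{z'}(z')$ and local boundedness of the kernel on compact subsets of $U$. The span of the $\widetilde f(w)$ is the span of the $k_z$, which is dense in $\H$. With all hypotheses of Theorem~\ref{t4} satisfied, the conclusion is that $\uuu\{zM_\phi^\star : b^{-1} < |z| < a^{-1}\}$ is a dense $G_\delta$-set. The main (and essentially only) potential obstacle, the antiholomorphy of the reproducing-kernel map, is neutralised by this conjugate reparametrization; no deeper ingredient is required once Theorem~\ref{t4} is in hand.
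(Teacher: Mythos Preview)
Your argument is correct and complete: the conjugate reparametrization $w\mapsto(\,k_{\overline w},\,\overline{\phi(\overline w)}\,)$ does convert the antiholomorphic kernel family into a genuine holomorphic eigenvector map, and Theorem~\ref{t4} then applies directly on $\H$.

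The paper handles the same conjugate-linearity obstruction by a different device. Rather than conjugating the parameter domain, it moves to the Banach dual $\H^*$ and works with the evaluation functionals $f(z)=\chi_z$, which are honestly holomorphic in $z$ and satisfy $M_\phi^*\chi_z=\phi(z)\chi_z$ for the Banach-space dual $M_\phi^*$. Theorem~\ref{t4} is applied there (with the original $U$ and $\phi$), and the conclusion is pulled back to $\H$ via the conjugate-linear Riesz isomorphism $R:\H\to\H^*$, using $R^{-1}S^*R=S^\star$. The two routes are essentially dual to one another: you absorb the conjugation into the domain, the paper absorbs it into the Riesz map. Your version has the small advantage of staying inside $\H$ throughout; the paper's version keeps the original $\phi$ and $U$ untouched and sidesteps any separate check of norm-continuity of the kernel map. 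Either way the substance is the same.
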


\begin{corollary}\label{co2} Let $T\in L(\H(\C))$ be such that
$TD=DT$ and $T\neq cI$ for any $c\in\C$. Then
$\uuu\{zT:z\in\C^\star\}$ is a dense $G_\delta$-set.
\end{corollary}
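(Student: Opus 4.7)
The plan is to derive Corollary~\ref{co2} from Theorem~\ref{t4} by producing, out of the commutation relation $TD=DT$, a joint eigenfunction family supplied by the exponentials $e_\lambda(z)=e^{\lambda z}$.

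First I would observe that for every $\lambda\in\C$ the vector $e_\lambda\in\H(\C)$ satisfies $De_\lambda=\lambda e_\lambda$ and that $\ker(D-\lambda I)=\C\cdot e_\lambda$ (a first-order linear ODE has a one-dimensional solution space). Since $T$ commutes with $D$, it preserves each of these eigenspaces, hence there is a unique scalar $\phi(\lambda)\in\C$ with $Te_\lambda=\phi(\lambda)e_\lambda$. The map $\lambda\mapsto e_\lambda$ from $\C$ to $\H(\C)$ is holomorphic (its composition with any $g\in\H(\C)^*$ gives an entire function of $\lambda$), so $\lambda\mapsto Te_\lambda$ is holomorphic too, and evaluating at $z=0$ gives $\phi(\lambda)=(Te_\lambda)(0)$, which is entire in $\lambda$.

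Next I would verify the remaining hypotheses of Theorem~\ref{t4} with $m=1$, $U=\C$, $f(\lambda)=e_\lambda$ and $\phi$ as above. Density of $\spann\{e_\lambda:\lambda\in\C\}$ in $\H(\C)$ follows from the fact that all Taylor coefficients $\lambda\mapsto\partial_\lambda^n e_\lambda\big|_{\lambda=0}=z^n$ lie in the closed linear span of the family (because the holomorphic curve $\lambda\mapsto e_\lambda$ has these derivatives, obtained as limits of finite differences), combined with density of polynomials in $\H(\C)$. That $\phi$ is non-constant is where the hypothesis $T\neq cI$ is used: if $\phi\equiv c$ then $T$ and $cI$ agree on the dense set $\spann\{e_\lambda\}$, so $T=cI$, contradicting the assumption.

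Finally I would compute the infimum and supremum of $|\phi|$: since $\phi$ is non-constant and entire, Liouville's theorem applied to $\phi$ gives $\sup_{\lambda\in\C}|\phi(\lambda)|=\infty$, and applied to $1/\phi$ (valid at least locally if $\phi$ is bounded away from $0$) forces $\inf_{\lambda\in\C}|\phi(\lambda)|=0$. Therefore $b^{-1}=0$ and $a^{-1}=\infty$, so the annulus $\{z:b^{-1}<|z|<a^{-1}\}$ equals $\C^\star$, and Theorem~\ref{t4} yields that $\uuu\{zT:z\in\C^\star\}$ is a dense $G_\delta$-set.

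I do not anticipate any serious obstacle; the only non-trivial point is checking that $\lambda\mapsto e_\lambda$ is genuinely holomorphic as an $\H(\C)$-valued map and that polynomials lie in the closure of its linear span, but both reduce to elementary manipulations with the standard Fr\'echet topology of uniform convergence on compacta on $\H(\C)$.
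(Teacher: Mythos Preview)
Your proposal is correct and follows essentially the same route as the paper: both use the exponential eigenvectors $e_\lambda$ to produce the data $(f,\phi)$ required by Theorem~\ref{t4}, check that $\phi$ is entire and non-constant, and then conclude. The only difference is cosmetic---the paper invokes Picard's theorem to get $\inf|\phi|=0$ and $\sup|\phi|=\infty$, whereas you use Liouville's theorem (applied to $\phi$ and to $1/\phi$), which is a more elementary and entirely adequate substitute.
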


\begin{corollary}\label{co3} Let $X$ be a separable Fr\'echet space,
$T\in L(X)$, $0\leq a<b\leq\infty$ and $T\in L(X)$. Assume also that
for any $\alpha,\beta\in\R$ such that $a<\alpha<\beta<b$, there
exists a dense subset $E$ of $X$ and a map $S:E\to E$ such that
$TSx=x$, $\alpha^{-n}T^n x\to 0$ and $\beta^nS^nx\to 0$ for each
$x\in E$. Then $\uuu\{zT:b^{-1}<|z|<a^{-1}\}$ is a dense
$G_\delta$-set.
\end{corollary}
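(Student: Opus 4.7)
The plan is to apply Theorem~\ref{t3a} with a family of sets $F_{k,c}$ consisting of explicit eigenvectors of $T^k$ with eigenvalue $c^k$, assembled from the right inverses provided by the hypothesis. First I would choose a countable collection of intervals $(\alpha_j,\beta_j)_{j\in\N}$ with $a<\alpha_j<\beta_j<b$ and $\bigcup_j(\alpha_j,\beta_j)=(a,b)$, and for each $j$ apply the hypothesis to produce a dense set $E_j\subseteq X$ together with a right inverse $S_j:E_j\to E_j$ of $T$ such that $\alpha_j^{-n}T^n x\to 0$ and $\beta_j^n S_j^n x\to 0$ for every $x\in E_j$.

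The central construction is, for each $j,k\in\N$, $x\in E_j$, and each complex $c$ with $\alpha_j<|c|<\beta_j$, the element
$$
y_{k,c,x}^{(j)}=\sum_{n=0}^{\infty}c^{-kn}T^{kn}x+\sum_{n=1}^{\infty}c^{kn}S_j^{kn}x.
$$
The decay hypotheses yield $p(T^m x)\leq C_p\alpha_j^m$ and $p(S_j^m x)\leq C'_p\beta_j^{-m}$ for every continuous seminorm $p$ on $X$, making both series converge absolutely and uniformly on compact subsets of the annulus. Hence $c\mapsto y_{k,c,x}^{(j)}$ is continuous on $(\alpha_j,\beta_j)$ (in fact holomorphic on the annulus), and a telescoping calculation that uses $T^k S_j^{kn}=S_j^{k(n-1)}$ gives $T^k y_{k,c,x}^{(j)}=c^k y_{k,c,x}^{(j)}$.

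I would then define
$$
F_{k,c}=\bigcup_{k'\mid k}\ \bigcup_{j:\,c\in(\alpha_j,\beta_j)}\bigl\{y_{k',c,x}^{(j)}:x\in E_j\bigr\}
$$
for $c\in(a,b)$ and $k\in\N$, and verify the four conditions of Theorem~\ref{t3a}. Conditions (\ref{t3a}.1) and (\ref{t3a}.4) are immediate: $T^{k'}y=c^{k'}y$ and $k'\mid k$ force $T^k y=c^k y$, and for $k_1,\dots,k_n\in\N$ the choice $k=\text{lcm}(k_1,\dots,k_n)$ gives $F_{k_j,c}\subseteq F_{k,c}$ for every $c$. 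Condition (\ref{t3a}.2) follows at once from the continuity of $c\mapsto y_{k',c,x}^{(j)}$. The main obstacle is the density condition (\ref{t3a}.3); here the crucial observation is that for fixed $j$, $c\in(\alpha_j,\beta_j)$, and $x\in E_j$, the tail estimates give
$$
p\bigl(y_{k,c,x}^{(j)}-x\bigr)\leq C_p\bigl((\alpha_j/c)^k+(c/\beta_j)^k\bigr)\xrightarrow[k\to\infty]{}0
$$
for every continuous seminorm $p$, so $y_{k,c,x}^{(j)}\to x$ in $X$ as $k\to\infty$. Consequently $E_j$ lies in the closure of $F_c$ whenever $c\in(\alpha_j,\beta_j)$, and density of $E_j$ yields density of $F_c$. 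Theorem~\ref{t3a} then furnishes the conclusion.
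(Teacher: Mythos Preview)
Your argument is correct and closely parallels the paper's, but with one genuinely different and more elementary step. Both proofs build eigenvectors out of the two-sided series coming from the hypotheses; indeed your $y_{k,c,x}^{(j)}$ is exactly the average $\frac{1}{k}\sum_{w^k=c^k}u_{x,w}$ of the paper's eigenvectors $u_{x,w}=x+\sum_{n\geq1}(w^{-n}T^nx+w^nS^nx)$ over the $k$th roots of $c^k$. The paper works with one pair $(\alpha,\beta)$ at a time, sets $F_{k,c}=\spann\{u_{x,w}:w^k=c^k\}$, and then takes a countable intersection at the end; you instead cover $(a,b)$ by countably many $(\alpha_j,\beta_j)$ and apply Theorem~\ref{t3a} once. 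The substantive difference is in the verification of the density condition~(\ref{t3a}.3): the paper argues by contradiction via Hahn--Banach, observing that for $g\in X^*$ the map $w\mapsto g(u_{x,w})$ is holomorphic on the annulus, vanishes on the dense set of $w$ with $w/c$ a root of unity, hence vanishes identically, and its zeroth Laurent coefficient is $g(x)$. Your route is more direct: the geometric tail bounds give $y_{k,c,x}^{(j)}\to x$ as $k\to\infty$, so each $E_j$ lies in $\overline{F_c}$. This avoids both the dual argument and the uniqueness theorem, and is arguably the cleaner proof of~(\ref{t3a}.3).

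One small omission: Theorem~\ref{t3a} is stated for complex $\F$-spaces, while Corollary~\ref{co3} allows $\K=\R$. The paper handles this by complexifying $X$, $T$, $E$ and $S$, applying the complex case, and then projecting the common hypercyclic vectors back to $X$; you should add the same reduction.
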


Note that Corollary~\ref{co1} gives an affirmative answer to
Question~BG, Corollary~\ref{co2} contains Theorem~CM as a particular
case, while Corollary~\ref{co3} may be considered as an analog of
the Kitai Criterion. The above results on common hypercyclic vectors
for scalar multiples of a given operator may lead to an impression
that for $0<a<b<\infty$ and a continuous linear operator $T$ on a
Fr\'echet space, hypercyclicity of $aT$ and $bT$ implies the
existence of common hypercyclic vectors for the family $\{cT:a\leq
c\leq b\}$. This impression is utterly false as follows from the
next treorem. For a continuous linear operator $T$ on a topological
vector space $X$, we denote
$$
M_T=\{c>0:cT\ \ \text{is hypercyclic}\}.
$$

\begin{theorem}\label{t5} {\rm I. }\ There exists $S\in L(\ell_2)$
such that $M_S=\{1,2\}$. {\rm II. }\ There exists $T\in L(\ell_2)$
such that $M_T$ is an open interval, but any $A\subset\R_+$ for
which the family $\{cT:c\in A\}$ has common hypercyclic vectors is
of zero Lebesgue measure.
\end{theorem}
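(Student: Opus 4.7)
The plan is to build both examples explicitly on $\ell_2$. A preliminary observation is that for a single backward weighted shift $B_w$, the set $M_{B_w}=\{c>0:\limsup_n c^n w_1\cdots w_n=\infty\}$ is automatically upper-closed in $(0,\infty)$ and hence a half-line. So Part~I cannot be solved by a single weighted shift, and for Part~II, even though a half-line $(r,\infty)$ is an open interval, the Abakumov--Gordon theorem shows the unweighted shift admits common hypercyclic vectors on the whole half-line, so the weights must be pushed far from that regime.

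For Part~I, the key structural fact is that $M_S=\{1,2\}$ is not upper-closed, so $S$ cannot be a single weighted shift. My first attempt would be a direct sum $S=A\oplus B$ of two weighted shifts where the hypercyclicity criterion for $cA\oplus cB$ requires a common subsequence $n_k$ along which both $c^{n_k}V_{n_k}\to\infty$ and $c^{n_k}W_{n_k}\to\infty$; the idea is to engineer the weight spikes of $A$ and $B$ on disjoint sparse supports with exponents tuned to powers of~$2$, forcing the joint condition to resonate only at $c=1$ and $c=2$. The main obstacle is that direct sums of backward shifts collapse to a single-shift condition on $\min(V_n,W_n)$ and still yield a half-line; overcoming this requires a genuinely different mechanism, perhaps combining a backward-shift component with a distinct operator type whose hypercyclicity is sensitive to both upper and lower bounds on $c$, so that $M_S$ can be bounded from above and then pinched down to exactly two points. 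The concrete construction is the main combinatorial challenge of this half of the proof.

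For Part~II, the natural route is the Godefroy--Shapiro eigenvalue method: realize $T\in L(\ell_2)$ as an operator whose adjoint admits dense spans of eigenvectors with eigenvalues densely distributed on two concentric circles of radii $r_1<r_2$. Then $cT$ satisfies the Godefroy--Shapiro hypercyclicity hypothesis precisely when the scaled circles $cr_1\T$ and $cr_2\T$ straddle $\T$, so $M_T=(1/r_2,1/r_1)$ is automatically an open interval. To prove that no $A\subseteq M_T$ of positive Lebesgue measure admits a common hypercyclic vector, I would fix $x\in\ell_2$ and show that $\{c\in M_T:x\in\uuu(cT)\}$ has Lebesgue measure zero, by a transversality/Fubini argument: the approximation condition $\|(cT)^nx-y\|<\varepsilon$ for fixed $n$, $y$, $\varepsilon$ restricts $c$ to a thin set whose measure is controlled by the rigidity of the $c$-dependence of orbits, and a union bound over a countable dense family of approximation data yields the null conclusion; any $A$ of positive measure with a common hypercyclic vector $x$ would then force $A$ into this null set, a contradiction. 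The hardest step is reconciling two opposing demands: $M_T$ must be a genuine interval so each $cT$ is hypercyclic, while per-scale hypercyclic vector sets vary so rigidly in $c$ that they never overlap on a positive-measure set.
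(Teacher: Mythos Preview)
Your plan has a genuine gap in Part~I and a risky mismatch in Part~II.

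\textbf{Part I.} Your preliminary observation that $M_{B_w}$ is a half-line applies only to \emph{unilateral} backward weighted shifts. For an invertible \emph{bilateral} weighted shift on $\ell_2(\Z)$, the Salas criterion (Theorem~S$'$ in the paper) is two-sided: $cT_w$ is hypercyclic iff $\liminf_n\bigl(c^{-n}\ww(0,n)^{-1}+c^{n}\ww(-n,0)\bigr)=0$, which constrains $c$ from both above and below. The paper exploits exactly this: it builds a single invertible bilateral weighted shift $S=T_w$ whose products $\gamma_\pm(n)$ are engineered so that the quantity $(c\lambda_+(b))^{-1}\leq 1$ and $c\lambda_-(b)\leq 1$ can hold simultaneously only for $c\in\{1,2\}$, where $b$ is a density parameter along subsequences. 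So your dismissal of ``single weighted shifts'' is premature, and the direct-sum route you propose (which you yourself note collapses back to a half-line condition) is unnecessary.

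\textbf{Part II.} Your Godefroy--Shapiro construction with eigenvalues on two circles is in tension with the paper's own positive results. Operators with a rich holomorphic supply of eigenvectors are precisely those for which Theorems~\ref{t3a} and~\ref{t4} \emph{produce} common hypercyclic vectors across the whole interval; you would have to argue carefully that restricting eigenvalues to two circles breaks condition~(\ref{t3a}.3) or its analogues, and nothing in your sketch addresses this. The paper instead takes a bilateral weighted shift $T=T_w$ with weights arranged so that $\beta(n)=\ww(0,n)$ is large only on a sparse union $J=\bigcup_k I_k$ of blocks around $m_k=2^{3k^2}$. The measure-zero argument is then concrete (Lemma~\ref{inter}): for a fixed common hypercyclic vector $x$ near $e_{-1}$, the set $Q$ of times $n$ at which some $a^nT^nx$ lands near $e_0$ satisfies $Q\subseteq J$ and, crucially, $(Q-Q)\cap\N\subseteq J$, forcing $|Q\cap I_k|\leq 2m_{k-1}$ and hence $\sum_{n\in Q}n^{-1}<\infty$. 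Since the condition $1<|e^{an}p_n|<e$ pins $a$ to an interval of length $1/n$, this summability gives Lebesgue-nullity of the scale set. Your ``transversality/Fubini'' intuition matches this last step, but the hard combinatorial core --- forcing the good times to be sparse via the difference-set inclusion --- is absent from your plan and is what actually drives the proof.
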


\section{Yet another general criterion}

\begin{lemma}\label{gc1} Let $A$ be a set and $X$, $Y$ and $\Omega$ be
topological spaces such that $\Omega$ is compact. For each $a\in A$
let $(\omega,x)\mapsto F_{a,\omega}x$ be a continuous map from
$\Omega\times X$ to $Y$. For any $\omega\in\Omega$ let
$\F_\omega=\{F_{a,\omega}:a\in A\}$ treated as a family of
continuous maps from $X$ to $Y$. Denote
$\uu^*=\bigcap\limits_{\omega\in\Omega}\uu(\F_\omega)$. Then
\begin{equation}\label{uu2}
\smash{G_V=\bigcap_{\omega\in\Omega}\bigcup_{a\in
A}F_{a,\omega}^{-1}(V)\quad \text{is open in $X$ for any open subset
$V$ of $Y$.}}\vru
\end{equation}
Moreover, for any base $\cal V$ of topology of $Y$,
\begin{equation}\label{uu1}
\smash{\uu^*=\bigcap_{V\in{\cal V}}G_V}.\vru
\end{equation}
In particular, $\uu^*$ is a $G_\delta$-set if $Y$ is second
countable.
\end{lemma}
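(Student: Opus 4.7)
The plan is to prove the three assertions in order: the openness claim~(\ref{uu2}) carries all the topological content, while the identification~(\ref{uu1}) and the $G_\delta$ conclusion follow by straightforward unwinding of definitions.

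For~(\ref{uu2}), I fix an open $V\subseteq Y$ and a point $x_0\in G_V$, and construct an open neighborhood $W$ of $x_0$ contained in $G_V$. For every $\omega\in\Omega$, the definition of $G_V$ supplies an $a_\omega\in A$ with $F_{a_\omega,\omega}(x_0)\in V$. The hypothesis that $(\omega',x)\mapsto F_{a_\omega,\omega'}(x)$ is continuous on $\Omega\times X$ makes the preimage of $V$ under this map an open subset of $\Omega\times X$ containing $(\omega,x_0)$, and inside this preimage I pick a basic product neighborhood $U_\omega\times W_\omega$ of $(\omega,x_0)$. Compactness of $\Omega$ lets me extract a finite subcover $U_{\omega_1},\dots,U_{\omega_n}$ of $\Omega$; then $W=\bigcap_{j=1}^n W_{\omega_j}$ is an open neighborhood of $x_0$. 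For arbitrary $x\in W$ and $\omega'\in\Omega$, choosing $j$ with $\omega'\in U_{\omega_j}$ gives $F_{a_{\omega_j},\omega'}(x)\in V$, so $x\in\bigcup_{a\in A}F_{a,\omega'}^{-1}(V)$; intersecting over $\omega'$ yields $x\in G_V$, and hence $W\subseteq G_V$.

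For~(\ref{uu1}), by definition $x\in\uu^*$ iff for every $\omega\in\Omega$ the orbit $\{F_{a,\omega}(x):a\in A\}$ is dense in $Y$. Density is equivalent to meeting every element of the base $\cal V$, so this condition reads: for every $V\in{\cal V}$ and every $\omega\in\Omega$ there is $a\in A$ with $F_{a,\omega}(x)\in V$, i.e.\ $x\in F_{a,\omega}^{-1}(V)$. Quantifying first over $V$ and then over $\omega$ gives exactly $x\in\bigcap_{V\in{\cal V}}G_V$. Once~(\ref{uu2}) is established, second countability of $Y$ lets me pick $\cal V$ countable, so~(\ref{uu1}) exhibits $\uu^*$ as a countable intersection of open sets, which is the final $G_\delta$ claim.

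The single non-trivial ingredient, and the only genuine obstacle, is the compactness of $\Omega$: it is exactly what lets me convert the family of neighborhoods $W_\omega$ (each of which would handle one value of $\omega$) into a single neighborhood $W$ of $x_0$ that works uniformly in $\omega'$. Without compactness, only a pointwise-in-$\omega$ openness statement could be deduced and $G_V$ need not be open. Everything else is a continuity argument for each fixed $a_\omega$ and formal manipulation of the density condition.
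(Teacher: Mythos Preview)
Your proof is correct and follows the same overall strategy as the paper's: pick for each $\omega$ an index $a_\omega$ that works at $x_0$, use continuity to thicken this to a neighborhood, invoke compactness of $\Omega$ to pass to finitely many $\omega_j$, and intersect the resulting $X$-neighborhoods.

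There is one minor but pleasant difference worth noting. You invoke the joint continuity of $(\omega',x)\mapsto F_{a_\omega,\omega'}(x)$ immediately to obtain a basic product neighborhood $U_\omega\times W_\omega$ in $\Omega\times X$, and then cover $\Omega$ by the $U_\omega$. The paper instead proceeds in two stages: it first uses only continuity in $\omega$ (at the fixed point $x$) to get open sets $W_\omega\subseteq\Omega$, then appeals to regularity of the compact Hausdorff space $\Omega$ to shrink these to $W'_\omega$ with $\overline{W'_\omega}\subseteq W_\omega$, extracts a finite subcover from the $W'_\omega$, and only afterwards uses joint continuity together with compactness of $\overline{W'_{\omega_j}}$ to produce the neighborhoods $U_j$ of $x$ in $X$. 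Your route is shorter and avoids the regularity detour entirely; the paper's version has the mild advantage that it would adapt if one only assumed separate continuity in $\omega$ plus joint continuity on compacta, but under the stated hypotheses your argument is the cleaner one. The verification of~(\ref{uu1}) and the $G_\delta$ conclusion match the paper's (which simply declares them immediate from the definitions).
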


\begin{proof} Let $x\in G_V$. Then for any $\omega\in\Omega$, there
exists $a(\omega)\in A$ such that $F_{a(\omega),\omega}x\in V$.
Continuity of the map $\omega\mapsto F_{a,\omega}x$ implies that for
each $\omega\in\Omega$,
$W_\omega=\{\alpha\in\Omega:F_{a(\omega),\alpha}x\in V\}$ is an open
neighborhood of $\omega$ in $\Omega$. Since any Hausdorff compact
space is regular, for any $\omega\in\Omega$, we can pick an open
neighborhood $W'_\omega$ of $\omega$ in $\Omega$ such that,
$\overline{W'_\omega}\subseteq W_\omega$. Since
$\{W'_\omega:\omega\in\Omega\}$ is an open covering of the compact
space $\Omega$, there are $\omega_1,\dots,\omega_n\in\Omega$ such
that $\Omega=\bigcup\limits_{j=1}^n W'_{\omega_j}$. Continuity of
the map $(\alpha,z)\mapsto F_{a,\alpha}z$ and compactness of
$\overline{W'_\omega}$ imply that for any $j\in\{1,\dots,n\}$, there
is a neighborhood $U_j$ of $x$ in $X$ such that
$F_{a(\omega_j),\alpha}z\in V$ for any $\alpha\in
\overline{W'_{\omega_j}}$ and $z\in U_j$. Let
$U=\bigcap\limits_{j=1}^n U_j$. Since $\Omega=\bigcup\limits_{j=1}^n
W'_{\omega_j}$, for any $z\in U$ and $\omega\in\Omega$, there exists
$j\in\{1,\dots,n\}$ such that $F_{a(\omega_j),\omega}z\in V$. Hence
$U\subseteq G_V$. Thus any point of $G_V$ is interior and therefore
$G_V$ is open. The equality (\ref{uu1}) follows immediately from the
definition of $\uu^*$.
\end{proof}

The main tool in the proof of Theorem~\ref{t1} is the following
criterion. It is a simultaneous generalization of results by Chan
and Sanders \cite[Theorem~2.1]{chs} and Grosse-Erdmann
\cite[Theorem~1]{ge1}. The latter is exactly the next proposition in
the case when $\Omega$ is a singleton.

\begin{proposition}\label{gc} Let $A$ be a set and $X,Y,\Omega$ be
topological spaces such that $X$ is Baire, $Y$ is second countable
and $\Omega$ is compact. For each $a\in A$, let $(\omega,x)\mapsto
F_{a,\omega}x$ be a continuous map from $\Omega\times X$ to $Y$. Let
$\F_\omega=\{F_{a,\omega}:a\in A\}$ for $\omega\in\Omega$ and
$\uu^*=\bigcap\limits_{\omega\in\Omega}\uu(\F_\omega)$. Then $\uu^*$
is a $G_\delta$-subset of $X$. Moreover, the following conditions
are equivalent.
\begin{itemize}\itemsep=-2pt
\item[\rm(\ref{gc}.1)]$\uu^*$ is dense in $X.$
\item[\rm(\ref{gc}.2)]For any non-empty open set $U$ in $X$ and any
non-empty open set $V$ in $Y$, there exists $x\in U$ such that
$V\cap\{F_{a,\omega}x:a\in A\}\neq \varnothing$ for each
$\omega\in\Omega$.
\end{itemize}
\end{proposition}

\begin{proof} Let $\cal V$ be a countable base of the topology of $Y$.
By Lemma~\ref{gc1}, $\uu^*$ is a $G_\delta$-set. Assume that
(\ref{gc}.2) is satisfied. For any $V\in{\cal V}$ and $n\in\N$,
condition (\ref{gc}.2) implies that $G_V$ defined by (\ref{uu2}) is
dense in $X$. By Lemma~\ref{gc1}, each $G_V$ is a dense open subset
of $X$. Since $X$ is Baire, (\ref{uu1}) implies that $\uu^*$ is a
dense $G_\delta$-subset of $X$. Hence (\ref{gc}.2) implies
(\ref{gc}.1). Next, assume that (\ref{gc}.1) is satisfied and $U$,
$V$ are non-empty open subsets of $X$ and $Y$ respectively. Since
$\uu^*$ is dense in $X$, there is $x\in \uu^*\cap U$. Let
$\omega\in\Omega$. Since $x\in\uu(\F_\omega)$, there is $a\in A$
such that $F_{a,\omega}x\in V$. Hence (\ref{gc}.2) is satisfied.
\end{proof}

Using Proposition~\ref{gc} and the fact that in a Baire topological
space the class of dense $G_\delta$ sets is closed under countable
intersections, we immediately obtain the following corollary.

\begin{corollary}\label{gc2} Let $A$ be a set and $X,Y,\Omega$ be
topological spaces such that $X$ is Baire, $Y$ is second countable
and $\Omega$ is the union of its compact subsets $\Omega_n$ for
$n\in\N$. For each $a\in A$, let $(\omega,x)\mapsto F_{a,\omega}x$
be a continuous map from $\Omega\times X$ to $Y$. Let
$\F_\omega=\{F_{a,\omega}:a\in A\}$ for $\omega\in\Omega$ and
$\smash{\uu^*=\bigcap\limits_{\omega\in\Omega}\uu(\F_\omega)}$. Then
$\uu^*$ is a $G_\delta$-subset of $X$. Moreover, the following
conditions are equivalent.
\begin{itemize}\itemsep=-2pt
\item[\rm(\ref{gc2}.1)]$\uu^*$ is dense in $X.$
\item[\rm(\ref{gc2}.2)]For each $n\in\N$, any non-empty open set $U$ in $X$ and any
non-empty open set $V$ in $Y$, there exists $x\in U$ such that
$V\cap\{F_{a,\omega}x:a\in A\}\neq \varnothing$ for each
$\omega\in\Omega_n$.
\end{itemize}
\end{corollary}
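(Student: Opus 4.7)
The plan is to deduce Corollary~\ref{gc2} from Proposition~\ref{gc} by applying the proposition to each compact piece $\Omega_n$ separately and then using the Baire property of $X$ to intersect the results.

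First I would introduce, for each $n\in\N$, the auxiliary set
$$
\uu^*_n=\bigcap_{\omega\in\Omega_n}\uu(\F_\omega),
$$
and observe that since $\Omega_n$ is compact, Proposition~\ref{gc} applies verbatim with $\Omega$ replaced by $\Omega_n$. This immediately tells me two things: $\uu^*_n$ is a $G_\delta$-subset of $X$, and $\uu^*_n$ is dense in $X$ if and only if the local version of condition (\ref{gc2}.2) holds for this particular $n$. Since $\Omega=\bigcup_{n\in\N}\Omega_n$, the identity
$$
\uu^*=\bigcap_{n\in\N}\uu^*_n
$$
is immediate from the definitions, which already exhibits $\uu^*$ as a countable intersection of $G_\delta$-sets and hence a $G_\delta$-subset of $X$.

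For the equivalence I would argue both directions separately. If (\ref{gc2}.1) holds, then $\uu^*$ is dense in $X$, and since $\uu^*\subseteq\uu^*_n$ for every $n$, each $\uu^*_n$ is dense as well; applying the $(\ref{gc}.1)\!\Rightarrow\!(\ref{gc}.2)$ direction of Proposition~\ref{gc} to $\Omega_n$ then yields exactly (\ref{gc2}.2). Conversely, if (\ref{gc2}.2) holds, then for every fixed $n$ the corresponding local condition (\ref{gc}.2) on $\Omega_n$ is satisfied, so by Proposition~\ref{gc} each $\uu^*_n$ is a dense $G_\delta$-subset of $X$. The Baire property of $X$ now lets me conclude that $\uu^*=\bigcap_{n\in\N}\uu^*_n$ is itself dense (and $G_\delta$), giving (\ref{gc2}.1).

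There is essentially no obstacle here beyond bookkeeping: the whole point is that the hypotheses have been engineered precisely so that Proposition~\ref{gc} applies to each $\Omega_n$, and the passage from a single compact $\Omega$ to a countable union of compacts is handled by one application of the Baire category theorem. The only mild care needed is in verifying that the continuity hypothesis on $(\omega,x)\mapsto F_{a,\omega}x$ restricts correctly to $\Omega_n\times X$, which is automatic, and in noting that the quantifier ``for each $\omega\in\Omega_n$'' in (\ref{gc2}.2) matches exactly the quantifier produced by Proposition~\ref{gc} when it is applied to $\Omega_n$.
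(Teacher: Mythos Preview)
Your proposal is correct and follows exactly the approach the paper indicates: the paper simply states that the corollary follows immediately from Proposition~\ref{gc} together with the fact that in a Baire space the class of dense $G_\delta$-sets is closed under countable intersections, and your argument is precisely the unpacking of that remark.
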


Recall that if $X$ is a topological vector space, $A$ is a set and
$\{f_n\}_{n\in\Z_+}$ is a sequence of maps from $A$ to $X$, then we
say that $f_n$ {\it uniformly converges to $0$} on $A$ if for any
neighborhood $W$ of $0$ in $X$, there is $n\in\Z_+$ such that
$f_k(a)\in W$ for any $a\in A$ and any $k\geq n$.

\begin{definition}\label{MO} \rm Let $X$ and $Y$ be topological vector
spaces, $A$ be a set and $\Omega$ be a topological space. We use the
symbol
$$
\L_{\Omega,A}(X,Y)
$$
to denote the set of maps $(\omega,a,n,x)\mapsto T_{\omega,a,n}x$
from $\Omega\times A\times\Z_+\times X$ to $Y$ such that
$T_{\omega,a,n}\in L(X,Y)$ for each $(\omega,a,n)\in \Omega\times
A\times\Z_+$ and the map $(\omega,x)\mapsto T_{\omega,a,n}x$ from
$\Omega\times X$ to $X$ is continuous for any $(a,n)\in
A\times\Z_+$. If $T\in \L_{\Omega,A}(X,Y)$ is fixed,
$\Lambda\subseteq \Z_+$, $u\in X$ and $U$ is a subset of $Y$, we
denote
\begin{equation}\label{MMM}
M(u,\Lambda,U)=\{\omega\in\Omega: \text{$T_{\omega,a,n}u\in U$ for
some $n\in\Lambda$ and $a\in A$}\}.
\end{equation}
\end{definition}

\begin{proposition}\label{gc3} Let $A$ be a set, $X$ be a Baire
topological vector space, $Y$ be a separable metrizable topological
vector space, $\Omega$ be a compact topological space and $T\in
\L_{\Omega,A}(X,Y)$ be such that
\begin{itemize}\itemsep=-2pt
\item[\rm(\ref{gc3}.1)] $E=\{x\in X:T_{\omega,a,n}x\to 0\ \text{as $n\to\infty$
uniformly on $\Omega\times A$}\}$ is dense in $X;$
\item[\rm(\ref{gc3}.2)] for any non-empty open subset $U$ of $Y$,
there exist $m\in\N$ and compact subsets $\Omega_1,\dots,\Omega_m$
of $\Omega$ such that $\Omega=\bigcup\limits_{j=1}^m\Omega_j$ and
for any $j\in\{1,\dots,m\}$, $l\in\Z_+$ and a neighborhood $W$ of
$0$ in $X$, there are a finite set $\Lambda\subset\Z_+$ and $u\in W$
for which $\min\Lambda\geq l$ and $\Omega_j\subseteq
M(u,\Lambda,U)$.
\end{itemize}
Then $\uu^*=\bigcap\limits_{\omega\in\Omega}\uu(\F_\omega)$ is a
dense $G_\delta$-subset of $X$, where
$\F_\omega=\{T_{\omega,a,n}:a\in A,\ n\in\Z_+\}$.
\end{proposition}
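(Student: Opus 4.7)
The plan is to verify condition~(\ref{gc}.2) of Proposition~\ref{gc} directly; since $\Omega$ is compact, $X$ is Baire and $Y$ is separable metrizable (hence second countable), Proposition~\ref{gc} will then yield that $\uu^*$ is a dense $G_\delta$-subset of $X$. Fixing non-empty open $U\subseteq X$ and $V\subseteq Y$, I must produce a single $x\in U$ such that for every $\omega\in\Omega$ some $(a,n)\in A\times\Z_+$ gives $T_{\omega,a,n}x\in V$. The candidate will be $x=x_0+u_1+\cdots+u_m$, where $x_0\in E\cap U$ (available by density of $E$) is a base point and each perturbation $u_k\in E$ handles one piece $\Omega_k$ of the compact cover of $\Omega$ obtained by applying hypothesis~(\ref{gc3}.2) to the set $V$ itself.

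I construct $(u_k,\Lambda_k)$ by induction on $k\in\{1,\dots,m\}$. At step~$k$, two thresholds are prepared. First, uniform convergence on $\Omega\times A$ of $T_{\omega,a,n}y\to 0$ (which holds for $y\in\{x_0,u_1,\dots,u_{k-1}\}\subseteq E$) yields $l^{(k)}$ making each such $T_{\omega,a,n}y$ small once $n\geq l^{(k)}$. Second, joint continuity of $(\omega,x)\mapsto T_{\omega,a,n}x$, combined with finite refinements $(\omega^{(j)}_i,a^{(j)}_i,n^{(j)}_i)_{i\leq N_j}$ of each $\Omega_j$, $j<k$, extracted at earlier steps, furnishes a neighborhood $\widetilde W^{(k)}$ of $0$ in $X$ such that each operator $T_{\omega,a^{(j)}_i,n^{(j)}_i}$ sends $\widetilde W^{(k)}$ into a ball of radius $\delta_j/m$ whenever $\omega$ lies in the covering neighborhood of $\omega^{(j)}_i$; here $\delta_j>0$ is the slack around $T_{\omega^{(j)}_i,a^{(j)}_i,n^{(j)}_i}u_j\in V$ recorded at step~$j$. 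I then invoke~(\ref{gc3}.2) with $j=k$, $l=l^{(k)}$, $W=W_k\cap\widetilde W^{(k)}$ (the nested $W_k$ ensuring the final sum stays in $U$) to obtain a preliminary pair $(u',\Lambda_k)$ with $\Omega_k\subseteq M(u',\Lambda_k,V)$, and upgrade $u'$ to $u_k\in E$: the set $\{u\in W_k\cap\widetilde W^{(k)}:\Omega_k\subseteq M(u,\Lambda_k,V)\}$ is open in $X$ by a Lemma~\ref{gc1}-style argument using compactness of $\Omega_k$ and joint continuity, and therefore meets the dense set $E$. Finally, by compactness of $\Omega_k$ and continuity, I extract a finite refinement $(\omega^{(k)}_i,a^{(k)}_i,n^{(k)}_i)_{i\leq N_k}$ of $\Omega_k$ together with a positive slack $\delta_k$ around each $T_{\omega,a^{(k)}_i,n^{(k)}_i}u_k\in V$ throughout the covering neighborhood, for use at subsequent steps.

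Verification is then mechanical. For $\omega\in\Omega$, pick $k$ with $\omega\in\Omega_k$ and $i$ with $\omega$ in the covering neighborhood of $\omega^{(k)}_i$; decompose $T_{\omega,a^{(k)}_i,n^{(k)}_i}x$ as the main term $T_{\omega,a^{(k)}_i,n^{(k)}_i}u_k\in V$ (with slack $\delta_k$) plus contributions from $x_0$ and each $u_j$, $j\neq k$: the $j<k$ (and $x_0$) contributions are small by uniform convergence and $\min\Lambda_k\geq l^{(k)}$, while the $j>k$ contributions are small because $u_j\in\widetilde W^{(j)}$ forces $T_{\omega,a^{(k)}_i,n^{(k)}_i}u_j$ into a ball of radius $\delta_k/m$; so the total sits in $V$. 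The principal obstacle I foresee is this two-sided cross-contamination -- both earlier $u_j$ ($j<k$) and later $u_j$ ($j>k$) potentially polluting the step-$k$ verification -- and it is resolved by the double trick of embedding each $u_k$ in $E$ (by density plus openness of the auxiliary set in $u$) and of passing at every step from the compact set $\Omega_k$ to a finite refinement (by joint continuity and compactness), so that later steps need only control finitely many operators at $0$. Neither hypothesis~(\ref{gc3}.1) alone nor compactness of the $\Omega_k$'s alone suffices; it is precisely their combination with the joint continuity built into $\L_{\Omega,A}(X,Y)$ that closes the induction.
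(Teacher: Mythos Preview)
Your overall strategy --- verify condition~(\ref{gc}.2) by building $x=x_0+u_1+\cdots+u_m$ inductively, with $u_k$ handling $\Omega_k$ --- is sound, but there is a circularity in the order of choices. At step~$k$ you first pick the threshold $l^{(k)}$ (making $T_{\omega,a,n}y$ ``small'' for $y\in\{x_0,u_1,\dots,u_{k-1}\}$ and $n\geq l^{(k)}$), \emph{then} construct $u_k$, and only \emph{afterwards} extract the slack $\delta_k$ of the main term inside $V$. In the verification for $\omega\in\Omega_k$ you need those earlier contributions, together with the later ones, to fit inside a $\delta_k$-ball; but $\delta_k$ depends on $u_k$, which in turn depends on $l^{(k)}$, so $l^{(k)}$ cannot be calibrated against $\delta_k$. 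The remedy is to fix the slack in advance: choose $y_0\in V$ and a neighborhood $W$ of $0$ in $Y$ with $y_0+W+W\subseteq V$, set $V_0=y_0+W$ so that $V_0+W\subseteq V$, and apply~(\ref{gc3}.2) to $V_0$ rather than to $V$. Then ``small'' means ``in $W$'' throughout, independently of $k$, and the circularity disappears. This pre-shrinking is exactly how the paper opens its proof.

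Beyond this fix, the paper's argument differs from yours structurally and is shorter. You handle the backward direction (later $u_j$ must not spoil the success on $\Omega_k$ for $k<j$) by passing to a finite refinement $(\omega^{(k)}_i,a^{(k)}_i,n^{(k)}_i)_{i\leq N_k}$ of each $\Omega_k$ and forcing later $u_j$ into neighborhoods $\widetilde W^{(j)}$ on which these finitely many operators are small. The paper avoids this bookkeeping entirely: it carries a running sum $u_q=u_{q-1}+u$ and observes, via Lemma~\ref{gc1}, that $G=\{v\in X:\Omega_p\subseteq M(v,\Lambda_p,V)\ \text{for}\ 1\leq p<q\}$ is \emph{open} in $X$; since $u_{q-1}\in G$, one simply takes the increment $u$ inside the neighborhood $(G\cap U)-u_{q-1}$ of $0$, and then $u_q\in G$ automatically. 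No finite refinements, no $\delta_k$'s, no $\widetilde W^{(k)}$'s are needed. Your route works once the $V_0+W\subseteq V$ shrinkage is inserted, but the paper's single appeal to the openness of $G$ is the cleaner device for absorbing the backward direction.
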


\begin{proof} Let $U_0$ be a non-empty open subset of $X$ and $U$ be
a non-empty open subset of $Y$. Pick $y_0\in U$ and a neighborhood
$W$ of zero in $Y$ such that $y_0+W+W\subseteq U$. Then $V=y_0+W$ is
a non-empty open subset of $Y$ and $V+W\subseteq U$. According to
(\ref{gc3}.2), there exist compact subsets $\Omega_1,\dots,\Omega_m$
of $\Omega$ such that $\Omega=\bigcup\limits_{j=1}^m\Omega_j$ and
\begin{equation}\label{gc33}
\begin{array}{l}
\text{for any $j\in\{1,\dots,m\}$, $l\in\Z_+$ a any neighborhood
$W_1$ of $0$ in $X$, there are}\\ \text{a finite set
$\Lambda\subset\Z_+$ and $u\in W_1$ such that $\min\Lambda\geq l$
and $\Omega_j\subseteq M(u,\Lambda,V)$.}
\end{array}
\end{equation}
We shall construct inductively $u_0,\dots,u_{m}\in E\cap U_0$ and
finite sets $\Lambda_1,\dots,\Lambda_{m}\subset \Z_+$ such that for
$0\leq j\leq m$,
\begin{equation}\label{conj}
\text{$\Omega_p\subseteq M(u_j,\Lambda_p,U)$ for $1\leq p\leq j$.}
\end{equation}
By (\ref{gc3}.1), the linear space $E$ is dense in $X$. Hence we can
pick $u_0\in U_0\cap E$, which will serve as the basis of induction.
Assume now that $1\leq q\leq m$ and $u_0,\dots,u_{q-1}\in E\cap U_0$
and finite subsets $\Lambda_1,\dots,\Lambda_{q-1}$ of $\Z_+$
satisfying (\ref{conj}) with $0\leq j\leq q-1$ are already
constructed. We shall construct $u_q\in E\cap U_0$ and a finite
subset $\Lambda_q$ of $\Z_+$ satisfying (\ref{conj}) with $j=q$.
Consider the set
$$
G=\{u\in X:\Omega_p\subseteq M(u,\Lambda_p,U)\ \ \text{for}\ \ 1\leq
p\leq q-1\}.
$$
Since $\Omega_p$ are compact and $U$ is open, Lemma~\ref{gc1}
implies that $G$ is open in $X$. According to (\ref{conj}) with
$j=q-1$, $u_{q-1}\in G$. Since $u_{q-1}\in E$, there exists
$l\in\Z_+$ such that
\begin{equation}\label{sss}
\text{$T_{\omega,a,n}u_{q-1}\in W$ for any $n\geq l$ and any
$(\omega,a)\in\Omega\times A$.}
\end{equation}
Since $u_{q-1}\in G\cap U_0$, and $G\cap U_0$ is open in $X$,
$W_1=(G\cap U_0)-u_{q-1}$ is a neighborhood of $0$ in $X$. According
to (\ref{gc33}), there exists a finite subset $\Lambda_q$ of $\Z_+$
such that
$$
\min\Lambda_q\geq l\ \ \ \text{and}\ \ \ G_1=\{u\in
W_1:\Omega_q\subseteq M(u,\Lambda_q,V)\}\neq\varnothing.
$$
By Lemma~\ref{gc1}, $G_1$ is open in $X$. Since $E$ is dense in $X$,
we can pick $u\in G_1\cap E$. Denote $u_q=u_{q-1}+u$. We shall see
that $u_q$ and $\Lambda_q$ satisfy (\ref{conj}) with $j=q$.

Since $u_{q-1},u\in E$  and $E$ is a linear space, we have $u_q\in
E$. Since $u\in W_1=(G\cap U_0)-u_{q-1}$, we get $u_q\in G\cap U_0$.
In particular, $u_q\in U_0\cap E$ and $u_q\in G$. By definition of
$G$, $\Omega_p\subseteq M(u_q,\Lambda_p,U)$ for $1\leq p\leq q-1$.
Since $u\in G_1$, for any $\omega\in \Omega_q$, there exist
$n_\omega\in \Lambda_q$ and $a_\omega\in A$ such that
$T_{\omega,a_\omega,n_\omega}u\in V$. Since $n_\omega\in \Lambda_q$
and $\min\Lambda_q\geq l$, we have $n_\omega\geq l$. According to
(\ref{sss}), $T_{\omega,a_\omega,n_\omega}u_{q-1}\in W$. The
equality $u_q=u_{q-1}+u$ and linearity of
$T_{\omega,a_\omega,n_\omega}$ imply
$T_{\omega,a_\omega,n_\omega}u_q\in V+W\subseteq U$. Since
$\omega\in \Omega_q$ is arbitrary, $\Omega_q\subseteq
M(u_q,\Lambda_q,U)$. This completes the proof of (\ref{conj}) for
$j=q$ and the inductive construction of $u_0,\dots,u_{m}$ and
$\Lambda_1,\dots,\Lambda_{m}$ satisfying (\ref{conj}).

Since $\Omega$ is the union of $\Omega_j$ with $1\leq j\leq m$,
(\ref{conj}) for $j=m$ implies that $u_m\in U_0$ and
$\Omega=M(u_m,\Z_+,U)$. That is, for any $\omega\in\Omega$ there are
$a\in A$ and $n\in\Z_+$ such that $T_{\omega,a,n}u_m\in U$. Since
$U_0$ and $U$ are arbitrary non-empty open subsets of $X$ and $Y$
respectively, condition (\ref{gc}.2) is satisfied. By
Proposition~\ref{gc}, $\uu^*$ is a dense $G_\delta$-subset of $X$.
\end{proof}

Since for any $\delta>0$, any compact interval of the real line is
the union of finitely many intervals of length $\leq \delta$, we
immediately obtain the following corollary.

\begin{corollary}\label{gc4} Let $A$ be a set, $X$ be a Baire
topological vector space, $Y$ be a separable metrizable topological
vector space, $\Omega$ be a compact interval of $\R$ and $T\in
\L_{\Omega,A}(X,Y)$ be such that $(\ref{gc3}.1)$ is satisfied and
\begin{itemize}\itemsep=-2pt
\item[\rm(\ref{gc4}.2)] for any non-empty open subset $U$ of $Y$,
there exists $\delta>0$ such that for any compact interval
$J\subseteq\Omega$ of length $\leq\delta$, $l\in\Z_+$ and a
neighborhood $W$ of $0$ in $X$, there exist a finite set
$\Lambda\subset\Z_+$ and $u\in W$ for which $\min\Lambda\geq l$ and
$J\subseteq M(u,\Lambda,U)$.
\end{itemize}
Then $\uu^*=\bigcap\limits_{\omega\in\Omega}\uu(\F_\omega)$ is a
dense $G_\delta$-subset of $X$, where
$\F_\omega=\{T_{\omega,a,n}:a\in A,\ n\in\Z_+\}$.
\end{corollary}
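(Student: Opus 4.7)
The plan is to deduce Corollary~\ref{gc4} directly from Proposition~\ref{gc3}. Hypothesis (\ref{gc3}.1) is already assumed in the corollary, so the entire task reduces to reading (\ref{gc3}.2) off from (\ref{gc4}.2). Morally, (\ref{gc4}.2) supplies a single $\delta>0$ which works uniformly for every short compact subinterval of $\Omega$, whereas (\ref{gc3}.2) only asks for a fixed finite covering of $\Omega$ by compact pieces over which the conclusion holds. The natural bridge is therefore to chop $\Omega$ into consecutive closed subintervals of length $\leq\delta$.

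Concretely, I would fix a non-empty open $U\subseteq Y$ and let $\delta>0$ be the constant furnished by (\ref{gc4}.2). Writing $\Omega=[\alpha,\beta]$ with $\alpha<\beta$, I would choose an integer $m\in\N$ with $(\beta-\alpha)/m\leq\delta$ and partition $\Omega$ into $m$ consecutive closed subintervals $\Omega_1,\dots,\Omega_m$ of equal length $(\beta-\alpha)/m$. Each $\Omega_j$ is a compact subinterval of $\Omega$ of length at most $\delta$, and their union is $\Omega$. These will play the role of the compact sets demanded in (\ref{gc3}.2).

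To finish, given any $j\in\{1,\dots,m\}$, any $l\in\Z_+$ and any neighborhood $W$ of $0$ in $X$, I would invoke (\ref{gc4}.2) with $J=\Omega_j$ to produce a finite set $\Lambda\subset\Z_+$ with $\min\Lambda\geq l$ together with a vector $u\in W$ satisfying $\Omega_j\subseteq M(u,\Lambda,U)$. That is exactly the content of (\ref{gc3}.2) for the chosen cover, so Proposition~\ref{gc3} applies and yields that $\uu^*$ is a dense $G_\delta$-subset of $X$. There is no genuine obstacle in this argument; the only point worth checking is that each subinterval of the partition has strictly positive length $(\beta-\alpha)/m>0$, so it qualifies as a compact interval under the paper's convention. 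The substantive work has already been carried out in Proposition~\ref{gc3}.
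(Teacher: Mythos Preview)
Your argument is correct and is exactly the approach the paper takes: the corollary is stated immediately after the observation that any compact interval can be written as a finite union of compact subintervals of length at most $\delta$, which is precisely your partition step. There is nothing to add.
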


\section{Operator groups with the Runge property}

In this section we  prove a statement more general than of
Theorem~\ref{t1}.

\begin{definition}\label{run} \rm Let $X$ be a locally convex
topological vector space and $\{T_z\}_{z\in\C}$ be an operator
group. That is, $T_z\in L(X)$ for each $z\in\C$, $T_0=I$ and
$T_{z+w}=T_zT_w$ for any $z,w\in\C$. We say that $\{T_z\}_{z\in\C}$
has the {\it Runge property} if for any continuous seminorm $p$ on
$X$ there exists $c=c(p)>0$ such that for any finite set $S$ of
complex numbers satisfying $|z-z'|\geq c$ for $z,z'\in S$, $z\neq
z'$, any $\epsilon>0$ and $\{x_z\}_{z\in S}\in X^S$, there is $x\in
X$ such that $p(T_{z}x-x_z)<\epsilon$ for each $z\in S$.
\end{definition}

\begin{lemma}\label{rutr} For each $a\in \C$ let $T_a\in L(\H(\C))$
be the translation operator $Tf(z)=f(z-a)$. Then the group
$\{T_a\}_{a\in\C}$ has the Runge property.
\end{lemma}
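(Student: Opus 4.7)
The plan is to reduce the statement to the standard seminorms $p_R(f)=\sup_{|w|\leq R}|f(w)|$ on $\H(\C)$ and then apply the classical Runge approximation theorem. Since every continuous seminorm $p$ on $\H(\C)$ satisfies $p\leq Cp_R$ for some $C,R>0$, it suffices to produce $c(p_R)$, and then set $c(p)=c(p_R)$, replacing $\epsilon$ by $\epsilon/C$ in the application.

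Next I would rewrite $p_R(T_zx-x_z)$ in a convenient form. Substituting $v=w-z$ gives
$$
p_R(T_zx-x_z)=\sup_{|w|\leq R}|x(w-z)-x_z(w)|=\sup_{v\in\overline{D(-z,R)}}|x(v)-g_z(v)|,
$$
where $g_z(v):=x_z(v+z)$ is itself entire. Thus the required approximation becomes the following: find an entire function $x$ that uniformly approximates the entire function $g_z$ on the closed disk $\overline{D(-z,R)}$ within $\epsilon$, simultaneously for every $z\in S$.

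I would then set $c(p_R)=2R+1$. If $|z-z'|\geq c$ for distinct $z,z'\in S$, then $|(-z)-(-z')|>2R$, so the closed disks $\overline{D(-z,R)}$, $z\in S$, are pairwise disjoint. The compact set $K=\bigcup_{z\in S}\overline{D(-z,R)}$ has connected complement, since $S$ is finite. Choose pairwise disjoint open neighborhoods $U_z$ of $\overline{D(-z,R)}$ and define $h:\bigcup_{z\in S}U_z\to\C$ by $h|_{U_z}=g_z$; then $h$ is holomorphic on an open neighborhood of $K$. By Runge's theorem (polynomial approximation on compact sets with connected complement) there is a polynomial $P$ with $\sup_{w\in K}|P(w)-h(w)|<\epsilon$. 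Taking $x=P\in\H(\C)$ yields $p_R(T_zx-x_z)<\epsilon$ for every $z\in S$, as required.

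There is no genuine obstacle: the argument is essentially a repackaging of Runge's theorem. The only point that requires care is the bookkeeping between $R$, the separation constant $c(p_R)$, and a general seminorm, together with the clean reformulation of $p_R(T_zx-x_z)$ in terms of approximation on the translated disks $\overline{D(-z,R)}$.
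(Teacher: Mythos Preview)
Your proposal is correct and follows essentially the same route as the paper: reduce to a sup-seminorm, choose the separation constant larger than twice the radius so that the translated disks $\overline{D(-z,R)}$ are pairwise disjoint with connected complement, rewrite $p_R(T_zx-x_z)$ as a uniform approximation problem on these disks, and invoke Runge's theorem to produce a single polynomial. The only cosmetic differences are notational (the paper absorbs the constant $C$ into its seminorm $q(f)=a\max_{|z|\leq a}|f(z)|$ and takes any $c>2a$ rather than $c=2R+1$).
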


\begin{proof} Let $p$ be a continuous seminorm on $\H(\C)$. Then
there exist $a>0$ such that $p(f)\leq q(f)$ for each $f\in \H(\C)$,
where $q(f)=a\max\limits_{|z|\leq a}|f(z)|$. Take any $c>2a$. We
shall show that $c$ satisfies the condition from
Definition~\ref{run}. Let $\epsilon>0$, $S$ be a finite set of
complex numbers such that $|z-z'|\geq c$ for $z,z'\in S$, $z\neq z'$
and $\{f_z\}_{z\in S}\in \H(\C)^S$. For each $z\in S$ consider the
disk $D_z=\{w\in\C:|z+w|\leq a\}$ and let $D=\bigcup\limits_{z\in
S}D_z$. Since $|z-z'|\geq c$ for $z,z'\in S$, $z\neq z'$, the closed
disks $D_z$ are pairwise disjoint. It follows that $\C\setminus D$
is connected. By the classical Runge theorem, any function
holomorphic in a neighborhood of the compact set $D$ can be with any
prescribed accuracy uniformly on $D$ approximated by a polynomial.
Thus there is a polynomial $f$ such that $\sup\limits_{w\in
D_z}|f(w)-f_z(z+w)|<\epsilon/a$ for any $z\in S$. Equivalently,
$\sup\limits_{|w|\leq a}|f(w-z)-f_z(w)|<\delta$ for any $z\in S$.
Using the definitions of $T_z$ and $q$, we obtain $p(T_zf-f_z)\leq
q(T_zf-f_z)<\epsilon$ for each $z\in S$.
\end{proof}

It is also easy to show that the translation group satisfies the
Runge property when acting on the Fr\'echet space $C(\C)$ of
continuous functions $f:\C\to\C$ with the topology of uniform
convergence on compact sets. Recall that an operator semigroup
$\{T_t\}$ is called {\it strongly continuous} if the map
$(t,x)\mapsto T_tx$ is separately continuous.

\begin{theorem}\label{t1a} Let $X$ be a separable Fr\'echet space and
$\{T_z\}_{z\in\C}$ be a strongly continuous operator group on $X$
with the Runge property. Then the family $\{aT_b:a\in\K^\star,\
b\in\C^\star\}$ has a dense $G_\delta$-set of common hypercyclic
vectors.
\end{theorem}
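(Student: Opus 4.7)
The plan is to invoke Corollary~\ref{gc2} for the family $\{aT_b:(a,b)\in\K^\star\times\C^\star\}$, combined with a multi-scale Runge approximation. Write $\K^\star\times\C^\star=\bigcup_{k\ge 1}\Omega_k$ with $\Omega_k=\{(a,b):k^{-1}\le|a|,|b|\le k\}$ compact. With $A=\Z_+$ and $F_{n,\omega}x=a^nT_{nb}x$ for $\omega=(a,b)$, the set of common hypercyclic vectors coincides with the $\uu^*$ of Corollary~\ref{gc2}. It thus suffices to verify (\ref{gc2}.2) on each $\Omega_k$: given non-empty open $U_0,V\subset X$, exhibit $x\in U_0$ such that for every $(a,b)\in\Omega_k$ some iterate $a^nT_{nb}x$ lies in $V$. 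In the complex case, Theorem~LM first reduces the argument of $a$ to its modulus, making the effective parameter space on $\Omega_k$ three-dimensional; in the real case it is already three-dimensional since $a\in\R^\star$.

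To construct $x$, fix a continuous seminorm $p$ on $X$, points $u_0\in U_0$ and $v\in V$, and $\epsilon>0$ carving out sub-neighborhoods; after enlarging $p$ so it dominates $y\mapsto p(T_zy)$ uniformly for $z$ in a bounded set, let $c=c(p)$ be the Runge constant. Partition $\Omega_k$ into disjoint strata $E_n$ indexed by integers $n$ in a range $[N_1,N_2]$, and subdivide $E_n$ into boxes with sides $\sim\delta_0/(kn)$ in the $|a|$-direction and $\sim\delta_0/n$ in each $b$-coordinate, with representatives $(r_{n,i},b_{n,i})$, where $\delta_0>0$ is a fixed tolerance. The Runge property then supplies $x\in X$ with $p(x-u_0)<\epsilon$ and $p\bigl(T_{nb_{n,i}}x-r_{n,i}^{-n}v\bigr)\lesssim k^{-n}\delta_0$ for all $(n,i)$, provided the shift set $\{0\}\cup\{nb_{n,i}\}_{n,i}$ is pairwise $c$-separated in $\C$. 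For $(a,b)$ in the cell around $(r_{n,i},b_{n,i})$, the decomposition $a^nT_{nb}x=(a/r_{n,i})^n\,T_{n(b-b_{n,i})}\bigl[r_{n,i}^nT_{nb_{n,i}}x\bigr]$ shows $a^nT_{nb}x\approx v$: the factor $(a/r_{n,i})^n\approx 1$ by the cell width in $|a|$, $T_{n(b-b_{n,i})}v\approx v$ by strong continuity since $|n(b-b_{n,i})|\lesssim\delta_0$, and the prefactor $|a|^n\le k^n$ absorbs the prescribed Runge error.

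The main obstacle is combinatorial-geometric. Cells at scale $n$ have three-dimensional volume $\sim\delta_0^3/(kn^3)$, while the $c$-separated shift points $nb_{n,i}$ must be accommodated in the annulus $\{n/k\le|z|\le nk\}$ of area $\sim n^2k^2$; cross-scale overlap of these annuli further restricts how many new shifts can be placed at scale $n$. A careful area-packing argument is needed to show that each stratum $E_n$ can be taken with volume at least on the order of $1/n$ (times a constant depending on $k,c,\delta_0$), so that the total coverage $\sum_n|E_n|\sim\log(N_2/N_1)$ exceeds $|\Omega_k|$ once $N_2/N_1$ is taken sufficiently large. Realising this multi-scale partition explicitly and verifying uniform approximation across the exponentially-long range of scales $n$ is the technical core of the proof.
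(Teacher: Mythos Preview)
Your proposal has a genuine gap: you invoke only Theorem~LM, obtaining a three-real-dimensional parameter space, whereas the paper also uses Theorem~CMP to collapse it to two. For fixed $b\in\C^\star$, the family $S_t=|a|^{t/|b|}T_{tb/|b|}$ is a strongly continuous operator semigroup with $S_{|b|}=|a|T_b$, so CMP (together with LM) gives $\uuu(aT_b)=\uuu\bigl(|a|^{1/|b|}T_{b/|b|}\bigr)$. Hence the common hypercyclic vectors for the whole family coincide with those for $\{e^{s}T_\theta:(\theta,s)\in\T\times\R\}$, and it suffices to work on the compact pieces $\T\times[-d,d]$. This reduction is precisely what dissolves the obstacle you flag: with $\theta\in\T$, the shifts used at scale $n$ lie on the circle $|z|=n$ (in the paper, in the pairwise disjoint thin annuli of Lemma~\ref{tech}), so there is \emph{no cross-scale overlap at all}, and the single remaining real parameter $s$ is then covered by the divergence of $\sum 1/n$ over scales, exactly as you anticipate.

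Without the CMP step your shift annuli $\{nb:k^{-1}\le|b|\le k\}$ at consecutive scales overlap almost completely, and the ``careful area-packing argument'' you defer to does not close. Already within a single scale there is trouble: tiling the $b$-annulus by cells of side $\delta_0/n$ requires about $k^2n^2/\delta_0^2$ centres, while $c$-separation in the image annulus of area $\sim k^2n^2$ permits only about $k^2n^2/c^2$ of them, forcing $\delta_0\gtrsim c$; since $\delta_0$ is dictated by strong continuity and $c$ by the Runge constant of the chosen seminorm, there is no reason this should hold. In the paper's two-dimensional picture the analogous tension (angular resolution $\delta/n$ versus $c$-separation on a circle of radius $n$) is resolved by Lemma~\ref{tech}, which staggers points across several nearby integer radii $nR+2m,\,nR+4m,\dots$, exploiting the spare radial direction. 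In your three-dimensional setup the shift points still live in the plane $\C$, so no spare dimension is available for such a staggering, and the packing deficit remains.
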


According to Lemma~\ref{rutr}, Theorem~\ref{t1} is a particular case
of Theorem~\ref{t1a}. The rest of this section is devoted to the
proof of Theorem~\ref{t1a}. We need a couple of technical lemmas.

\begin{lemma}\label{tech} For each $\delta,C>0$, there is
$R>0$ such that for any $n\in\N$, there exists a finite set
$S\subset\C$ such that $|z|\in\N$ and $nR+c\leq |z|\leq(n+1)R-c$ for
any $z\in S$, $|z-z'|\geq c$ for any $z,z'\in S$, $z\neq z'$ and for
each $w\in\T$, there exists $z\in S$ such that
$\bigl|w-\frac{z}{|z|}\bigr|<\delta/|z|$.
\end{lemma}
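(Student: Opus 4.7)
The plan is to construct $S$ explicitly as a grid of points placed on $N$ concentric circles of integer radii contained in the annulus $\{z\in\C:nR+c\leq|z|\leq(n+1)R-c\}$, where $N$ is chosen large in terms of $c/\delta$ and a small fractional offset is applied per circle so that the combined angular positions form a very uniformly distributed subset of $\T$.

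Fix an integer $N>2\pi c/\delta$ and choose $R$ so large that for every $n\in\N$ the interval $[nR+c,(n+1)R-c]$ contains the $N$ integers
$$
r_k:=\lceil nR+c\rceil+k\lceil c\rceil,\qquad k=0,\dots,N-1;
$$
any $R\geq N\lceil c\rceil+2c+2$ works. Let $m$ be the largest integer satisfying $2r_0\sin(\pi/m)\geq c$; for $R$ large enough this yields $m\geq r_0/c$. Set
$$
z_{k,j}:=r_k\exp\!\Bigl(2\pi i\,\tfrac{j+k/N}{m}\Bigr)\qquad(0\leq k<N,\ 0\leq j<m),
$$
and $S=\{z_{k,j}\}$. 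Integrality $|z_{k,j}|=r_k\in\N$ and the annular bound hold by construction. Same-circle separation is $2r_k\sin(\pi/m)\geq 2r_0\sin(\pi/m)\geq c$. For distinct circles, the identity
$$
|re^{i\theta}-r'e^{i\theta'}|^2=(r-r')^2+2rr'(1-\cos(\theta-\theta'))\geq(r-r')^2
$$
together with $|r_k-r_{k'}|\geq\lceil c\rceil$ gives $|z_{k,j}-z_{k',j'}|\geq c$ whenever $k\neq k'$.

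The core observation is that the angles $\theta_{k,j}=2\pi(j+k/N)/m=2\pi(jN+k)/(mN)$, as $(k,j)$ ranges over $\{0,\dots,N-1\}\times\{0,\dots,m-1\}$, realize exactly the equispaced grid $\{2\pi\ell/(mN):0\leq\ell<mN\}$ via the bijection $\ell=jN+k$. Hence for every $w=e^{i\phi}\in\T$ there are indices $(k,j)$ with $|\phi-\theta_{k,j}|\leq\pi/(mN)$, so $\bigl|w-z_{k,j}/|z_{k,j}|\bigr|\leq\pi/(mN)$. Since $m\geq r_0/c$ and $r_k\leq 2r_0$ (for $R$ large), this is at most $\pi c/(r_0N)\leq 2\pi c/(r_kN)$, which is strictly less than $\delta/r_k$ by the choice $N>2\pi c/\delta$.

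The main obstacle is precisely this last coverage estimate. On a single circle the $c$-spacing caps the angular density at roughly $c/r$, which is coarser than the required $\delta/r$ whenever $\delta<c$. The trick is the elementary combinatorial identity $\{2\pi(j+k/N)/m\}_{j,k}=\{2\pi\ell/(mN)\}_\ell$, which shows that $N$ appropriately offset copies of a coarse per-circle equispacing assemble into a single grid on $\T$ that is $N$ times finer, recovering the factor $c/\delta$ needed to close the gap.
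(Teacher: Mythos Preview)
Your proof is correct and follows essentially the same strategy as the paper: place points on a fixed number of concentric integer-radius circles with a small fractional angular offset per circle, so that the division-with-remainder bijection $\ell\leftrightarrow(jN+k)$ makes the projected directions form a single equispaced grid on $\T$ that is $N$ times finer than the per-circle grid. The paper's constants are organized slightly differently (radial gap $2m$ with $2m\geq c$, and $k\approx\pi m/\delta$ circles playing the role of your $N$), but the core combinatorial identity and the balancing of the $c$-separation against the $\delta/|z|$-coverage are identical.
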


\begin{proof} Without loss of generality, we may assume that
$0<\delta<1$. Pick $m\in\N$ such that $2m\geq c$ and $h\in\N$ such
that $h\geq (40\cdot m)/\delta$. We shall show that $R=hm$ satisfies
the desired condition. Pick $n\in\N$ and consider $k=k(n)\in\N$
defined by the formula $k=\bigl[\frac{\pi (n+1)m}{2\delta
n}\bigr]+1$, where $[t]$ is the integer part of $t\in\R$. For $1\leq
j\leq k$ let $n_j=nR+2jm$. Clearly $n_j$ are natural numbers and
$n_1=nR+2m\geq nR+c$. On the other hand, $n_k=nR+2mk\leq (n+1)R-2m$.
Indeed, the last inequality is equivalent to $2(k+1)\leq h$, which
is an easy consequence of the two inequalities $h>(40\cdot
m)/\delta$ and $k+1\leq \frac{\pi (n+1)m}{2\delta n}+2\leq \frac{\pi
m}{\delta}+2$. Thus,
\begin{equation}\label{nk}
nR+c\leq n_1\leq n_j\leq n_k\leq (n+1)R-2m\leq (n+1)R-c\ \ \text{for
$1\leq j\leq k$}.
\end{equation}
Now we can define a finite set $S$ of complex numbers in the
following way:
\begin{equation}\label{SS}
S=\{z_{j,l}:1\leq j\leq k,\ 0\leq l\leq 2nh-1\},\ \ \text{where}\ \
z_{j,l}=n_j\exp\Bigl(\frac{\pi i(lk+j)}{nhk}\Bigr)
\end{equation}
and $\exp(z)$ stands for $e^z$. Clearly for each $z_{j,l}\in S$, we
have $|z_{j,l}|=n_j\in\N$. Moreover, according to (\ref{nk}),
$nR+c\leq |z|\leq (n+1)R-c$ for any $z\in S$. Next, let $z,z'\in S$
and $z\neq z'$. Then $z=z_{j,l}$ and $z'=z_{p,q}$ for $1\leq j,p\leq
k$, $0\leq l,q\leq 2nh-1$ and $(j,l)\neq (p,q)$. If $j\neq p$, then
$|z-z'|\geq ||z|-|z'||=|n_j-n_p|=2m|j-p|\geq2m\geq c$. If $j=p$,
then $l\neq q$ and
$$
|z-z'|=n_j\Bigl|\exp\Bigl(\frac{\pi i
l}{nh}\Bigr)-\exp\Bigl(\frac{\pi i q}{nh}\Bigr)\Bigr|\geq
n_j\Bigl|\exp\Bigl(\frac{\pi
i}{nh}\Bigr)-1\Bigr|=2n_j\sin\Bigl(\frac{\pi}{2nh}\Bigr).
$$
The inequality $\sin x\geq \frac{2x}{\pi}$ for $0\leq x\leq \pi/2$,
the inequality $n_j>nR$ and the equality $R=hm$ imply $|z-z'|\geq
\frac{4\pi n_j}{2\pi nh}=\frac{2n_j}{nh}>\frac{2nR}{nh}=2m\geq c$.
Thus $|z-z'|\geq c$ for any $z,z'\in S$, $z\neq z'$. Finally,
consider the set $\Sigma=\{z/|z|:z\in S\}$. Clearly
\begin{equation*}
\Sigma=\Bigl\{\exp\Bigl(\frac{\pi i(lk+j)}{nhk}\Bigr):{{1\leq j\leq
k,\atop 0\leq l\leq 2nh-1}}\Bigr\}= \Bigl\{\exp\Bigl(\frac{\pi
ij}{nhk}\Bigr):1\leq j\leq 2nhk\Bigr\}=\{z\in\C:z^{2nhk}=1\}.
\end{equation*}
It immediately follows that
$$
\sup_{w\in\T}\min_{z\in\Sigma}|w-z|=\Bigl|1-\exp\Bigl(\frac{\pi
i}{2nhk}\Bigr)\Bigr|=2\sin\Bigl(\frac{\pi}{4nhk}\Bigr)\leq
\frac{\pi}{2nhk}=\frac{\pi m}{2nRk}.
$$
Since $k>\frac{\pi (n+1)m}{2\delta n}$, we get
$\sup\limits_{w\in\T}\min\limits_{z\in\Sigma}|w-z|<\delta
(n+1)^{-1}R^{-1}$. That is, for any $w\in\T$, there exists $z\in S$
such that $\bigl|w-\frac{z}{|z|}\bigr|<\frac{\delta}{R(n+1)}$. Since
$|z|<R(n+1)$, we obtain $\bigl|w-\frac{z}{|z|}\bigr|<\delta/|z|$,
which completes the proof.
\end{proof}

\begin{lemma}\label{lll} Let $X$ be a locally convex
topological vector space and $\{T_z\}_{z\in\C}$ be an operator group
on $X$ such that the map $(u,h)\mapsto T_hu$ from $X\times\C$ to $X$
is continuous. Let also $x\in X$ and $p$ be a continuous seminorm on
$X$. Then there exist a continuous seminorm $q$ on $X$ and
$\delta>0$ such that $p\leq q$ and for any $a\in\R$, $w\in\T$,
$n\in\N$ and $y\in X$ satisfying $q(x-e^{an}T_{wn}y)<1$, we have
$p(x-e^{bn}T_{zn}y)<1$ whenever $b\in\R$ and $z\in\T$ are such that
$|a-b|<\delta/n$ and $|w-z|<\delta/n$.
\end{lemma}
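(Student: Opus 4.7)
The plan is to decouple the $n$-dependence using the group law and then exploit the equicontinuity of $\{T_s:|s|<\delta_0\}$ furnished by joint continuity at the origin. Setting $u=e^{an}T_{wn}y$, $t=(b-a)n$, and $s=(z-w)n$, the group law gives $e^{bn}T_{zn}y=e^{t}T_{s}u$, and the hypotheses $|a-b|,|w-z|<\delta/n$ translate into $|t|,|s|<\delta$. Thus the statement reduces to producing a continuous seminorm $q\geq p$ and $\delta>0$ such that $q(x-u)<1$ implies $p(x-e^{t}T_{s}u)<1$ whenever $|t|,|s|<\delta$, a condition now free of $n$.

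The key preparation is an equicontinuity estimate. Joint continuity of $(u,h)\mapsto T_h u$ at $(0,0)$, applied to the neighborhood $\{v:p(v)<1\}$ of $T_0\cdot 0 = 0$, yields an absolutely convex neighborhood $W$ of $0$ in $X$ and $\delta_0>0$ with $T_s(W)\subseteq \{p<1\}$ for all $|s|<\delta_0$. Letting $r$ denote the Minkowski functional of $W$, a standard rescaling argument gives $p(T_s v)\leq r(v)$ for every $v\in X$ and $|s|<\delta_0$. Separately, joint continuity of $(t,s)\mapsto e^{t}T_s x$ at $(0,0)$, for the fixed vector $x$, provides $\delta_1\in(0,\min(\delta_0,\log 2))$ with $p(x-e^{t}T_{s}x)<1/2$ whenever $|t|,|s|<\delta_1$.

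Now take $q=4r+p$ and $\delta=\delta_1$; clearly $q$ is a continuous seminorm with $q\geq p$. To verify the conclusion, decompose
$$
x-e^{t}T_{s}u=(x-e^{t}T_{s}x)+e^{t}T_{s}(x-u)
$$
and apply $p$. The first term contributes less than $1/2$ by the choice of $\delta_1$. For the second, $|e^{t}|\leq 2$ since $\delta<\log 2$, while $p(T_{s}(x-u))\leq r(x-u)\leq q(x-u)/4<1/4$, so $p(e^{t}T_{s}(x-u))<1/2$. The triangle inequality then gives $p(x-e^{t}T_{s}u)<1$, which is the required bound.

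No serious obstacle arises: the main technical point is the observation that the joint-continuity hypothesis on the group already forces equicontinuity of $\{T_s:|s|<\delta_0\}$ near the origin, which is precisely what allows the perturbation $x-u$ to be transported through $T_s$ without losing control. Everything else is triangle-inequality bookkeeping once the correct choice $q=4r+p$ has been identified.
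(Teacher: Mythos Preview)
Your proof is correct and rests on the same two ingredients as the paper's: equicontinuity of $\{T_s:|s|\ \text{small}\}$ (the paper obtains $p(T_hu)\leq q(u)/4$ directly from joint continuity, you obtain $p(T_sv)\leq r(v)$ via the Minkowski functional of an absolutely convex neighborhood) and continuity of the action at the fixed vector $x$. The difference is in the packaging. Your substitution $u=e^{an}T_{wn}y$, $t=(b-a)n$, $s=(z-w)n$ removes $n$ from the problem at the outset and leads to the single clean decomposition $x-e^tT_su=(x-e^tT_sx)+e^tT_s(x-u)$, handling the scalar and translational perturbations simultaneously. The paper instead perturbs the scalar and the translation in separate steps, which forces an intermediate bound $p(e^{an}T_{wn}y)\leq q(x)+1$ and produces the extra term $(e^\delta-1)(q(x)+1)$ that must then be made small by a further restriction on $\delta$. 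Your route is tidier and avoids that detour; the paper's route is slightly more hands-on but arrives at the same place.
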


\begin{proof} Since the map $(u,h)\mapsto T_hu$ from $X\times\C$ to $X$ is
continuous, there is $\theta>0$ and a continuous seminorm $q$ on $X$
such that $p(x-T_hx)\leq 1/4$ and $p(T_hu)\leq q(u)/4$ for any $u\in
X$ whenever $|h|\leq \theta$. In particular, $p(u)\leq q(u)/4\leq
q(u)$ for each $u\in X$. Pick $r\in(0,\theta)$ and assume that
$a,b\in\R$, $w,z\in\T$, $n\in\N$ and $y\in X$ are such that
$q(x-e^{an}T_{wn}y)<1$, $|a-b|<r/n$ and $|w-z|<r/n$. Then
$p(e^{an}T_{wn}y)\leq q(e^{an}T_{wn}y)\leq q(x)+1$. Since
$|a-b|<r/n$, we have $|e^{(b-a)n}-1|<e^r-1$. Hence
\begin{equation}\label{ee1}
p(e^{bn}T_{wn}y-e^{an}T_{wn}y) =|e^{(b-a)n}-1|p(e^{an}T_{wn}y)\leq
(e^r-1)(q(x)+1).
\end{equation}
Since $|nw-nz|<r<\theta$ and $p(T_hu)\leq q(u)/4$ for any $u\in X$
whenever $|h|\leq \theta$, we have
$$
p(T_{(z-w)n}x-e^{an}T_{zn}y)=p(T_{(z-w)n}(x-e^{an}T_{wn}y))\leq
q(x-e^{an}T_{wn}y)/4<1/4.
$$
Since $|(z-w)n|<r<\theta$, we get $p(x-T_{(z-w)n}x)\leq 1/4$. Using
this inequality together with the last display and the triangle
inequality, we obtain $p(x-e^{an}T_{zn}y)\leq 1/2$. The latter
together with (\ref{ee1}) and the triangle inequality gives
$p(x-e^{bn}T_{zn}y)<(e^r-1)(q(x)+1)+1/2$. Hence any
$\delta\in(0,\theta)$ satisfying $(e^\delta-1)(q(x)+1)<1/2$,
satisfies also the desired condition.
\end{proof}

\subsection{Proof of Theorem~\ref{t1a}}

By Theorems~LM and~CMP, $\uuu(bT_a)=\uuu(b'T_{a'})$ if $|b|=|b'|$
and $a/a'\in\R_+$. Hence the set of common hypercyclic vectors of
the family $\{aT_b:a\in\K^\star,\ b\in\C^\star\}$ coincides with the
set $G$  of common hypercyclic vectors for the family
$\{e^bT_a:(a,b)\in\T\times \R\}$. Thus it remains to show that $G$
is a dense $G_\delta$-subset of $X$. Fix $d>0$. According to
Corollary~\ref{gc2}, it suffices to demonstrate that
\begin{equation}\label{W}
\begin{array}{l}
\text{for any non-empty open subsets $U$ and $V$ of $X$, there is
$y\in U$ such that}\\ \text{for any $a\in\T$ and $b\in[-d,d]$ there
is $n\in\N$ for which $e^{bn}T_{an}y\in V$.}
\end{array}
\end{equation}

Pick a continuous seminorm $p$ on $X$ and $u,x\in X$ such that
$\{y\in X:p(u-y)<1\}\subseteq U$ and $\{y\in X:p(x-y)<1\}\subseteq
V$. By the uniform boundedness principle \cite{sch}, strong
continuity of $\{T_z\}_{z\in\C}$ implies that the map $(z,v)\mapsto
T_zv$ from $\C\times X$ to $X$ is continuous. By Lemma~\ref{lll},
there is a continuous seminorm $q$ on $X$ and $\delta>0$ such that
$p(v)\leq q(v)$ for any $v\in X$ and
\begin{equation}\label{L}
\begin{array}{l}
\text{for any $a,b\in\R$, $w,z\in\T$, $n\in\N$ and $y\in X$
satisfying $q(x-e^{an}T_{wn}y)<1$,}
\\
\text{$|a-b|<\delta/n$ and $|w-z|<\delta/n$, we have
$p(g-e^{bn}T_{zn}y)<1$.}
\end{array}
\end{equation}
Since $\{T_z\}_{z\in\C}$ has the Runge property, there is $c>0$ such
that
\begin{equation}\label{J}
\begin{array}{l}
\text{for any finite set $S\subset\C$ with $|z-z'|\geq c$ for
$z,z'\in S$, $z\neq z'$, any $\epsilon>0$ and any}
\\
\text{$\{x_z\}_{z\in S}\in X^S$, there exists $y\in X$ such that
$q(T_{z}y-x_z)<\epsilon$ for any $z\in S$.}
\end{array}
\end{equation}

Let $R>0$ be the number provided by Lemma~\ref{tech} for the just
chosen $\delta$ and $c$. By Lemma~\ref{tech}, for each $n\in\N$
there is a finite set $S_n\subset\C$ such that $|z|\in\N$ and
$nR+c\leq|z|\leq(n+1)R-c$ for any $z\in S_n$, $|z-z'|\geq c$ for any
$z,z'\in S_n$, $z\neq z'$ and for each $w\in\T$, there is $z\in S_n$
such that $\bigl|w-\frac{z}{|z|}\bigr|<\frac{\delta}{|z|}$. Since
$\sum\limits_{n=1}^\infty n^{-1}=\infty$, we can pick
$d_1,\dots,d_k\in[-d,d]$ for which
\begin{equation}\label{cover}
\smash{[-d,d]\subseteq \bigcup_{n=1}^k \Bigl(d_n-\frac{\delta
R^{-1}}{n+1},d_n+\frac{\delta R^{-1}}{n+1}\Bigr).}
\end{equation}
Let $S=\bigcup\limits_{n=1}^kS_n$ and $\Lambda=S\cup\{0\}$. It is
straightforward to see that $\Lambda$ is a finite set, $|z|\in\Z_+$
for any $z\in\Lambda$ and $|z-u|\geq c$ for any $z,u\in \Lambda$,
$z\neq u$. Let $N=\max\{|z|:z\in\Lambda\}$ and $\epsilon=d^{-N}$. By
(\ref{J}), there is $y\in X$ such that $q(u-y)<\epsilon$ and
$q(T_zy-e^{-c_n|z|}x)<\epsilon$ for each $z\in S$. Then $p(u-y)\leq
q(u-y)<\epsilon<1$ and therefore $f\in U$. By definition of
$\epsilon$, $q(x-e^{c_n|z|}T_zy)<1$ for each $z\in S$. Let now
$a\in\T$ and $b\in [-d,d]$. According to (\ref{cover}), there is
$n\in\{1,\dots,k\}$ such that $|b-d_n|<\frac{\delta R^{-1}}{n+1}$.
By the mentioned property of the set $S_n$, we can choose $z\in S_n$
such that $\bigl|a-\frac{z}{|z|}\bigr|<\frac{\delta}{|z|}$. Since
$|z|<R(n+1)$, we have $|b-d_n|<\frac{\delta}{|z|}$. By (\ref{L}),
$p(x-e^{b|z|}T_{a|z|}y)<1$. Hence $e^{b|z|}T_{a|z|}f\in V$, which
completes the proof of (\ref{W}) and that of Theorem~\ref{t1a}.

\section{Scalar multiples of a fixed operator}

In this section we shall prove Theorems~\ref{t3a} and~\ref{t4} as
well as Corollaries~\ref{co1}, \ref{co2} and~\ref{co3}. Recall that
a subset $A$ of a vector space is called {\it balanced} if $zx\in A$
for any $x\in A$ and $z\in\K$ satisfying $|z|\leq 1$. It is
well-known that any topological vector space has a base of open
neighborhoods of zero consisting of balanced sets. For two subsets
$A,B$ of a vector space $X$ we say that $A$ {\it absorbs} $B$ if
there exists $c>0$ such that $B\subseteq zA$ for any $z\in\K$
satisfying $|z|\geq c$. Obviously, if $A$ is balanced, then $A$
absorbs $B$ if and only if there is $c>0$ for which $B\subseteq cA$.

\begin{lemma}\label{sm1} Let $X$ be a topological vector space and
$U$ be a non-empty open subset of $X$. Then there exists a non-empty
open subset $V$ of $X$ and a balanced neighborhood $W$ of zero in
$X$ such that $V+W\subseteq U$ and $W$ absorbs $V$.
\end{lemma}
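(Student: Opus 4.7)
The plan is to fix a point $x_0 \in U$ and take $V$ to be a small translate $V = x_0 + W_0$ of a balanced neighborhood $W_0$ of zero, while $W$ is a balanced neighborhood of zero chosen so that simultaneously $V + W \subseteq U$ and $V$ sits inside a single dilate of $W$. The two requirements pull in opposite directions---one wants $W_0$ small, the other wants $x_0+W_0$ to fit inside something of the form $cW$---and the key is to fix $W$ first and then shrink $W_0$ appropriately.

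First, by iterating continuity of vector addition at $(x_0,0,0)$ I would produce a balanced neighborhood $W$ of $0$ with $x_0 + W + W \subseteq U$. Any choice of $V$ inside $x_0 + W$ will then automatically give $V + W \subseteq U$, so the only remaining task is to find a balanced neighborhood $W_0$ of $0$, contained in $W$, for which $W$ absorbs $x_0+W_0$.

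The crucial observation is this: since $W$ is a neighborhood of $0$ in a topological vector space, it is absorbing, so there exists $c \geq 1$ with $x_0 \in cW$. The translate $cW - x_0$ is then open and contains $0$, hence is itself a neighborhood of $0$, and therefore so is $W \cap (cW - x_0)$. I then pick any balanced neighborhood $W_0$ of $0$ inside this intersection and set $V = x_0 + W_0$. The containment $W_0 \subseteq W$ yields $V + W \subseteq U$, while $W_0 \subseteq cW - x_0$ yields $V \subseteq cW$. Balancedness of $W$ now propagates the latter to all large scalars: for every $z\in\K$ with $|z|\geq c$, one has $cW = z\cdot(c/z)W \subseteq zW$ because $|c/z|\leq 1$, and hence $V \subseteq zW$, so $W$ absorbs $V$.

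The subtlety---and the main thing I want to be careful not to invoke---is that this argument does not route through any inequality of the form $cW + W \subseteq c'W$, which would require convexity of $W$ and would genuinely fail in the non-locally-convex setting relevant to Theorem~\ref{t3a}. Instead we use only the cheaper facts that $W$ absorbs the single point $x_0$ and that $W_0$ may be shrunk to fit inside the translate $cW-x_0$. Consequently no local convexity of $X$ is needed, in keeping with the statement being made for an arbitrary topological vector space.
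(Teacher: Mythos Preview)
Your proof is correct and follows essentially the same approach as the paper's: pick a point $x_0\in U$, take $V=x_0+W_0$ for a small balanced neighborhood $W_0$ of zero, and use that a balanced neighborhood of zero absorbs the single point $x_0$. The paper organizes the choices slightly more economically---it fixes one balanced $W_0$ with $x_0+W_0+W_0+W_0\subseteq U$, sets $W=W_0+W_0$, and then uses $x_0+W_0\subseteq cW_0+W_0\subseteq cW_0+cW_0=cW$ (valid since $W_0\subseteq cW_0$ for $c\geq1$), so no second shrinking step is needed---but the underlying idea is identical and, as you note, no local convexity is invoked.
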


\begin{proof} Pick $u\in U$ and a balanced neighborhood $W_0$ of
zero in $X$ such that $u+W_0+W_0+W_0\subseteq U$. Denote $V=u+W_0$
and $W=W_0+W_0$. Clearly $V$ is a non-empty open subset of $X$, $W$
is a balanced neighborhood of 0 in $X$ and
$V+W=u+W_0+W_0+W_0\subseteq U$. Since $W_0$ is a neighborhood of 0
in $X$, we can pick $c\geq 1$ such that $u\in cW_0$. Since $W_0$ is
balanced and $c\geq 1$, $W_0\subseteq cW_0$ and therefore
$V=u+W_0\subseteq cW_0+W_0\subseteq c(W_0+W_0)=cW$. Since $W$ is
balanced, $W$ absorbs $V$.
\end{proof}

To any continuous linear operator $T$ on a complex topological
vector space $X$ there corresponds ${\bf T}\in\L_{\R,\T}(X,X)$
defined by the formula ${\bf T}_{t,w,n}x=we^{tn}T^nx$. We will use
the symbol $M(T,u,\Lambda,U)$ to denote the sets defined in
(\ref{MMM}) for ${\bf T}$. In other words, for
$\Lambda\subseteq\Z_+$, $t\in\R$, $u\in X$ and a subset $U$ of $X$,
we write
\begin{equation*}
M(T,u,\Lambda,U)=\{t\in\R: \text{$we^{tn}T^nu\in U$
 for some $n\in\Lambda$ and $w\in\T$}\}.
\end{equation*}

\begin{lemma}\label{sm2} Let $X$ be a complex topological vector space,
$W$ be a balanced neighborhood of $0$ in $X$, $c>0$, $k\in\N$ and
$\delta\in(0,(2ck)^{-1}]$. Then for any $m\in\N$, any
$\alpha\in[-c,c]$, any $w\in\T$, any neighborhood $W_0$ of zero in
$X$ and any $x\in cW$ such that $T^kx=we^{-\alpha k}x$, there exist
$u\in W_0$ and a finite set $\Lambda\subset\N$ such that
$\min\Lambda\geq m$ and $[\alpha+\delta,\alpha+2\delta]\subseteq
M(T,u,\Lambda,x+W)$.
\end{lemma}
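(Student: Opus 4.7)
The plan is to exploit the eigenrelation $T^k x = w e^{-\alpha k} x$ by taking the test vector $u$ to be a small positive multiple of $x$. Concretely, I would set $u = \lambda x$ with $\lambda = e^{-2\delta j_0 k}$ and $\Lambda = \{jk : j_0 \leq j \leq 2j_0\}$, where $j_0 \in \N$ is chosen large. Iterating the eigenrelation gives $T^{jk}x = w^j e^{-\alpha jk} x$ for every $j$, so taking $w' = \overline{w}^{\,j} \in \T$ yields
\begin{equation*}
w' e^{tjk}T^{jk}u - x = \bigl(\lambda e^{(t-\alpha)jk}-1\bigr)x.
\end{equation*}
Since $x \in cW$ and $W$ is balanced, $\mu x \in W$ for any scalar $\mu$ with $|\mu|\leq 1/c$. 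Thus the requirement $w'e^{tjk}T^{jk}u \in x+W$ reduces to finding, for each $s = t-\alpha \in [\delta,2\delta]$, an index $j \in \{j_0,\dots,2j_0\}$ with $|\lambda e^{sjk}-1|\leq 1/c$.

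The choice $\lambda = e^{-2\delta j_0 k}$ is dictated by the resonance points $\sigma_j := -\log\lambda/(jk) = 2\delta j_0/j$ at which $\lambda e^{\sigma_j j k}=1$; these form a strictly decreasing sequence with $\sigma_{j_0}=2\delta$ and $\sigma_{2j_0}=\delta$, so they traverse the entire target interval. The set of $s$ for which $|\lambda e^{sjk}-1|\leq 1/c$ is an interval containing $\sigma_j$ whose length equals $\log\bigl((c+1)/(c-1)\bigr)/(jk)$ when $c>1$, while for $c\leq 1$ the lower constraint $\lambda e^{sjk}\geq 1-1/c$ is automatic (the right-hand side being non-positive) so only the upper bound matters. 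To cover $[\delta,2\delta]$ as $j$ ranges from $j_0$ to $2j_0$, I would compare this length against the gap $\sigma_j-\sigma_{j+1} = 2\delta j_0/(j(j+1))$; the elementary estimate $\log\bigl((c+1)/(c-1)\bigr) = \int_{c-1}^{c+1} t^{-1}\,dt \geq 2/c$ (obtained by pairing symmetric contributions $1/(c+u)+1/(c-u)=2c/(c^2-u^2)\geq 2/c$, or equivalently from convexity of $1/t$) converts the hypothesis $\delta\leq 1/(2ck)$ into the desired covering.

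It remains to choose $j_0$ large enough to simultaneously guarantee (i) $j_0 k\geq m$, whence $\min\Lambda\geq m$, and (ii) $u = e^{-2\delta j_0 k}x\in W_0$, which holds once $j_0$ is large because $\lambda\to 0$ and scalar multiplication is continuous. The trivial case $x=0$ is handled by taking $u=0$ and $\Lambda=\{m\}$. I expect the main obstacle to be the quantitative bookkeeping in the covering step: verifying that the interval lengths $\sim 1/(cjk)$ around the resonance points precisely absorb the gaps $\sim \delta/j$ between them under the threshold $\delta\leq 1/(2ck)$. This delicate matching between the exponential rate of the orbit and the quantitative hypothesis on $\delta$ is the crux of the lemma.
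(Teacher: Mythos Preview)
Your proposal is correct and follows essentially the same approach as the paper: with $j_0=p$, your choice $u=e^{-2\delta j_0 k}x$ and $\Lambda=\{jk:j_0\leq j\leq 2j_0\}$ is exactly the paper's $u_p=e^{-2\delta kp}x$ and $\Lambda_p=\{(p+j)k:0\leq j\leq p\}$, and your resonance points $\sigma_j=2\delta j_0/j$ coincide with the paper's $\theta_l-\alpha=2\delta p/(p+l)$.

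The only noteworthy difference is in the covering step. The paper bypasses interval-length computations entirely: for $\theta\in[\theta_{l+1},\theta_l]$ it uses the one-sided estimate $0\leq 1-e^{-t}\leq t$ (applied with $t=(p+l)k(\theta_l-\theta)\geq 0$) to get directly $|e^{(p+l)k(\theta-\theta_l)}-1|\leq (p+l)k(\theta_l-\theta_{l+1})\leq 2\delta k\leq 1/c$, which immediately lands $w_l e^{(p+l)k\theta}T^{(p+l)k}u_p$ in $x+W$. Your route via the two-sided interval length $\log\bigl((c+1)/(c-1)\bigr)/(jk)\geq 2/(cjk)$ also works, but be careful: knowing that the total length exceeds the gap is not by itself sufficient for covering, since $\sigma_j$ is not the center of the admissible interval. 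What you actually need is that the \emph{left} portion, of length $-\log(1-1/c)/(jk)\geq 1/(cjk)$, already exceeds the gap $\sigma_j-\sigma_{j+1}\leq 2\delta k/(j+1)\cdot(1/k)\leq 1/(c(j+1)k)$; this follows from the simpler inequality $-\log(1-y)\geq y$, which is essentially the paper's bound in disguise.
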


\begin{proof} Let $\alpha\in[-c,c]$, $w\in\T$ and any $x\in
cW$ be such that $T^kx=we^{-\alpha k}x$. For each $p\in\N$ consider
$u_p=e^{-2\delta kp}x$. Since $T^kx=we^{-\alpha k}x$, we see that
for $0\leq j\leq p$,
$$
T^{(p+j)k}u_p=e^{-\alpha(p+j)k}e^{-2\delta
kp}w^{p+j}x=\exp\Bigl(-(p+j)k\Bigl(\alpha+\frac{2p\delta}{p+j}\Bigr)\Bigr)w^{p+j}x.
$$
That is,
\begin{equation}\label{thj}
w_je^{(p+j)k\theta_j}T^{(p+j)k}u_p=x\ \ \text{for $1\leq j\leq p$,
where $\displaystyle\theta_j=\alpha+\frac{2\delta p}{p+j}$ and
$w_j=w^{-p-j}\in\T$.}
\end{equation}
Let now $0\leq l\leq p-1$ and $\theta\in[\theta_{l+1},\theta_l]$.
Since
$e^{(p+l)k\theta}T^{(p+l)k}u_p=e^{(p+l)k(\theta-\theta_l)}e^{(p+l)k\theta_l}T^{(p+l)k}u_p$,
using (\ref{thj}) with $j=l$, we obtain
$$
w_le^{(p+l)k\theta}T^{(p+l)k}u_p=e^{(p+l)k(\theta-\theta_l)}x=x+(e^{(p+l)k(\theta-\theta_l)}-1)x.
$$
Taking into account that $-(\theta_l-\theta_{l+1})\leq
\theta-\theta_l\leq 0$ and using the inequality $0\leq 1-e^{-t}\leq
t$ for $t\geq 0$, we see that $|e^{(p+l)k(\theta-\theta_l)}-1|\leq
(p+l)k(\theta_l-\theta_{l+1})$. This inequality, the inclusion $x\in
cW$ the last display and the fact that $W$ is balanced imply that
$$
w_le^{(p+l)k\theta}T^{(p+l)k}u_p\in
x+c|e^{(p+l)k(\theta-\theta_l)}-1|W\subseteq
x+c(p+l)k(\theta_l-\theta_{l+1})W.
$$
Since  $\theta_l-\theta_{l+1}=\frac{2p\delta}{(p+l)(p+l+1)}\leq
\frac{2\delta}{p+l}$ and $\delta\leq (2ck)^{-1}$, we have
$c(p+l)k(\theta_l-\theta_{l+1})\leq 1$. Thus according to the above
display, $w_le^{(p+l)k\theta}T^{(p+l)k}u_p\in x+W$ whenever
$\theta\in [\theta_{l+1},\theta_l]$. It follows that
$[\theta_{l+1},\theta_l]\subseteq M(T,u_p,\Lambda_p,x+W)$ for $0\leq
l\leq p-1$, where $\Lambda_p=\{(p+j)k:0\leq j\leq p\}$. Since the
sequence $\{\theta_j\}_{0\leq j\leq p}$ decreases,
$\theta_0=\alpha+2\delta$ and $\theta_p=\alpha+\delta$, we see that
$[\alpha+\delta,\alpha+2\delta]=\bigcup\limits_{l=0}^{p-1}[\theta_{l+1},\theta_l]$.
Since $[\theta_{l+1},\theta_l]\subseteq M(T,u_p,\Lambda_p,x+W)$ for
$0\leq l\leq p-1$, we have $[\alpha+\delta,\alpha+2\delta]\subseteq
M(T,u_p,\Lambda_p,x+W)$ for any $p\in\N$. Clearly
$\min\Lambda_p=pk\to\infty$ and $u_p=e^{-2\delta kp}x\to 0$ in $X$
as $p\to\infty$. Thus we can pick $p\in\N$ such that
$\min\Lambda_p>m$ and $u_p\in W_0$. Then $u=u_p$ and
$\Lambda=\Lambda_p$ for such a $p$ satisfy all desired conditions.
\end{proof}

We shall prove a statement more general than Theorem~\ref{t3a}.

\begin{theorem}\label{t3} Let $X$ be a separable
complex $\F$-space, $T\in L(X)$ and $0\leq a<b\leq\infty$. Assume
also that the following condition is satisfied.
\begin{itemize}\itemsep=-2pt
\item[\rm(\ref{t3}.1)]For any compact interval $J\subset(a,b)$ and
any non-empty open subset $V$ of $X$, there exists $k=k(J,V)\in\N$
and a dense subset $C=C(J,V)$ of $J$ such that
$$
\smash{V\cap \bigcup_{w\in\T}\ker(T^k-wc^kI)\neq\varnothing\ \
\text{for each $c\in C$}.}
$$
\end{itemize}
Then $\uuu\{zT:b^{-1}<|z|<a^{-1}\}$ is a dense $G_\delta$-set.
\end{theorem}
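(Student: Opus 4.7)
The plan is to apply Corollary~\ref{gc4} to the family $T_{t,w,n}x=we^{tn}T^nx$ with $(t,w,n)\in\Omega\times\T\times\Z_+$, where $\Omega$ ranges over compact subintervals of $(-\log b,-\log a)$. By Theorem~LM, a vector $x$ lies in $\uuu\{zT:b^{-1}<|z|<a^{-1}\}$ exactly when $x$ is universal for $\F_t=\{T_{t,w,n}:w\in\T,\ n\in\Z_+\}$ for every $t\in(-\log b,-\log a)$. Since $X$ is a separable $\F$-space (hence Baire) and the open interval $(-\log b,-\log a)$ is a countable union of compact subintervals, it suffices to prove that $\bigcap_{t\in\Omega}\uu(\F_t)$ is a dense $G_\delta$ for each compact $\Omega=[t_{\min},t_{\max}]\subset(-\log b,-\log a)$.

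To verify~(\ref{gc3}.1), I would consider the linear subspace $E$ of $x\in X$ with $e^{tn}T^nx\to 0$ uniformly in $(t,w)\in\Omega\times\T$. Any eigenvector $x\in\ker(T^k-wc^kI)$ with $c<e^{-t_{\max}}$ satisfies this: writing $n=mk+r$ with $0\leq r<k$, one has $e^{tn}T^nx=w^mc^{-r}e^{(t-t_c)n}T^rx$ with $t_c:=-\log c>t_{\max}$, so the scalar is uniformly bounded by $e^{(t_{\max}-t_c)n}\to 0$ while $T^rx$ ranges in a finite set. For density of $E$, apply~(\ref{t3}.1) to any non-empty open $V$ and any compact interval $J_0\subset(a,e^{-t_{\max}})$ (non-empty as $t_{\max}<-\log a$); the resulting eigenvectors $x_c\in V$ for $c\in C\subset J_0$ satisfy $c<e^{-t_{\max}}$, hence lie in $E\cap V$.

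To verify~(\ref{gc4}.2), fix a non-empty open $V\subseteq X$. Lemma~\ref{sm1} yields an open $V'$ and a balanced neighborhood $W$ of zero with $V'+W\subseteq V$ and $V'\subseteq c_0W$ for some $c_0>0$. Choose $c>0$ exceeding $c_0$, $|t_{\min}|$, and $|t_{\max}|$, and then choose a compact interval $J\subset(a,b)$ containing $\{e^{-t_0}:t_0\in[t_{\min}-2\delta,t_{\max}-\delta]\}$ for $\delta\in(0,(2ck)^{-1}]$ small enough, where $k$ is the integer produced by applying~(\ref{t3}.1) to $V'$ and $J$. The condition furnishes a dense $C\subset J$ with eigenvectors $x_{c^*}\in V'$ satisfying $T^kx_{c^*}=w_{c^*}(c^*)^kx_{c^*}$ for each $c^*\in C$. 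For any compact subinterval $I=[t_1,t_2]\subseteq\Omega$ of length at most $\delta/2$, any $l\in\Z_+$, and any neighborhood $W_0$ of zero, density of $C$ produces $c^*\in C$ with $t_0:=-\log c^*\in[t_2-2\delta,t_1-\delta]$, so $I\subseteq[t_0+\delta,t_0+2\delta]$. Lemma~\ref{sm2} applied to $t_0$, $x_{c^*}$, $l$, and $W_0$ then yields $u\in W_0$ and a finite $\Lambda\subset\N$ with $\min\Lambda\geq l$ and $[t_0+\delta,t_0+2\delta]\subseteq M(T,u,\Lambda,x_{c^*}+W)\subseteq M(T,u,\Lambda,V)$, whence $I\subseteq M(T,u,\Lambda,V)$ as required.

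The main subtle step is establishing~(\ref{gc3}.1). The key observation is that~(\ref{t3}.1) can be invoked on any compact subinterval of $(a,b)$, and in particular on one lying strictly below $e^{-t_{\max}}$, so that the eigenvalues $c^k$ of $T^k$ are small enough in modulus to force $e^{tn}T^nx\to 0$ uniformly across the fixed interval $\Omega$. Once this is in hand, verifying~(\ref{gc4}.2) is a direct matching of Lemma~\ref{sm2}'s output to Corollary~\ref{gc4}'s hypothesis, with Lemma~\ref{sm1} ensuring that the $W$-sized perturbations coming out of Lemma~\ref{sm2} remain inside $V$.
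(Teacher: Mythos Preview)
Your overall approach matches the paper's: reduce to compact $\Omega\subset(-\log b,-\log a)$, apply Corollary~\ref{gc4} to $T_{t,w,n}=we^{tn}T^n$, verify (\ref{gc3}.1) by applying (\ref{t3}.1) on an interval below $e^{-t_{\max}}$, and verify (\ref{gc4}.2) by combining Lemma~\ref{sm1} with Lemma~\ref{sm2}. The argument for (\ref{gc3}.1) is clean (your direct computation using $n=mk+r$ is in fact a bit more transparent than the paper's appeal to eigenvector decompositions).

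There is, however, a genuine circularity in your verification of (\ref{gc4}.2). You write: ``choose a compact interval $J\subset(a,b)$ containing $\{e^{-t_0}:t_0\in[t_{\min}-2\delta,t_{\max}-\delta]\}$ for $\delta\in(0,(2ck)^{-1}]$ small enough, where $k$ is the integer produced by applying~(\ref{t3}.1) to $V'$ and $J$.'' Here $J$ depends on $\delta$, $\delta$ depends on $k$, and $k$ depends on $J$. As written, none of these objects is well-defined. The paper breaks the loop by fixing an auxiliary $\alpha_0<\alpha$ (your $t_{\min}$) \emph{before} anything else: it applies (\ref{t3}.1) once to the fixed interval corresponding to $[\alpha_0,\beta]$, obtains $k$ and the dense set, and only then sets $\delta_0=\min\{(2ck)^{-1},\alpha-\alpha_0\}$. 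In your notation the fix is: first pick any compact $J\subset(a,b)$ whose interior strictly contains $[e^{-t_{\max}},e^{-t_{\min}}]$; apply (\ref{t3}.1) to $V'$ and this $J$ to get $k$ and $C$; then choose $\delta>0$ small enough that $\delta\leq(2ck)^{-1}$ and $\{e^{-t_0}:t_0\in[t_{\min}-2\delta,t_{\max}]\}\subset J$. With that order the rest of your argument goes through (modulo also ensuring $c$ is large enough to absorb the $2\delta$ slack in $|t_0|$, which is harmless once $\delta$ is chosen last).
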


\begin{proof} Let $\alpha_0,\alpha,\beta\in\R$ be such that
$b^{-1}<e^{\alpha_0}<e^\alpha<e^{\beta}<a^{-1}$. For each $\omega\in
[\alpha,\beta]$ consider the family ${\cal F}_\omega=\{ze^{\omega
n}T^n:z\in\T,\ n\in\Z_+\}$. We shall apply Corollary~\ref{gc4} with
$A=\T$, $T_{\omega,a,n}=ae^{\omega n}T^n$ and
$\Omega=[\alpha,\beta]$. First, pick a compact interval $J\subset
(a,e^{-\beta})$. For each non-empty open subset $V_0$ of $X$, we can
use (\ref{t3}.1) to find $x\in V_0$, $k\in\N$, $r\in J$ and $w\in\T$
such that $T^{k}x=wr^{k}x$. The latter equality implies that $x$ is
a sum of finitely many eigenvectors of $T$ corresponding to
eigenvalues $\lambda_j$ with $|\lambda_j|=r<e^{-\beta}$. Hence
$e^{\beta n}T^nx\to 0$ as $n\to\infty$. Since $V_0$ is an arbitrary
non-empty open subset of $X$ and $x\in V_0$, we see that the space
$E=\{x\in X:e^{\beta n}T^nx\to 0\}$ is dense in $X$. It immediately
follows that
$$
\text{for any $x\in E$, $ze^{\omega n}T^nx\to 0$ as $n\to\infty$
uniformly for $(z,\omega)\in\T\times[\alpha,\beta]$}.
$$
Hence (\ref{gc3}.1) is satisfied. Let now $U$ be a non-empty open
subset of $X$. By Lemma~\ref{sm1}, there exists a balanced
neighborhood $W$ of zero in $X$ and a non-empty open subset $V$ of
$X$ such that $V+W\subseteq U$ and $W$ absorbs $V$. Since $W$
absorbs $V$, there is $c>0$ such that $V\subseteq cW$. According to
(\ref{t3}.2), we can pick $k\in\N$ and a dense subset $R$ of
$[\alpha_0,\beta]$ for which
\begin{equation}\label{qbxc}
V\cap\bigcup_{w\in\T}\ker(T^k-we^{-rk}I)\neq\varnothing\ \ \text{for
any $r\in R$}.
\end{equation}
Let $\delta_0=\min\{(2ck)^{-1},\alpha-\alpha_0\}$ and $r\in R$. By
(\ref{qbxc}), we can pick $w_r\in\T$ and $x_r\in V\subseteq cW$ such
that $T^kx_r=wr^{-rk}x_r$. By Lemma~\ref{sm2}, for any neighborhood
$W_0$ of zero in $X$ and any $m\in\N$, there exist $u\in W_0$ and a
finite set $\Lambda\subset\N$ satisfying $\min\Lambda\geq m$ and
$[r+\delta_0,r+2\delta_0]\subseteq M(T,u,\Lambda,x_r+W)$. Pick
$\delta\in(0,\delta_0)$. Since $R$ is dense in $[\alpha_0,\beta]$
and $\delta_0\leq \alpha-\alpha_0$, it is easy to see that each
compact interval $J\subseteq[\alpha,\beta]$ of length at most
$\delta$ is contained in $[r+\delta_0,r+2\delta_0]$ for some $r\in
R$. Thus for each compact interval $J\subseteq[\alpha,\beta]$ of
length at most $\delta$, any neighborhood $W_0$ of zero in $X$ and
any $m\in\N$, there exist $r\in R$, $u\in W_0$ and a finite set
$\Lambda$ such that $\min\Lambda\geq m$ and $J\subseteq
M(T,u,\Lambda,x_r+W)$. The latter inclusion means that for each
$t\in J$, there exist $w_t\in\T$ and $n_t\in\Lambda$ such that
$w_tT^{n_t}u\in x_r+W$. Since $x_r\in V$ and $V+W\subseteq U$, we
get $w_tT^{n_t}u\in U$. That is, for any compact interval
$J\subseteq[\alpha,\beta]$ of length at most $\delta$, any
neighborhood $W_0$ of zero in $X$ and any $m\in\N$, there exist
$u\in W_0$ and a finite set $\Lambda$ such that $\min\Lambda\geq m$
and $J\subseteq M(T,u,\Lambda,U)$. Thus (\ref{gc4}.2)  is also
satisfied. By Corollary~\ref{gc4},
$$
H_{\alpha,\beta}=\bigcap_{\omega\in[\alpha,\beta]}\uu(\F_\omega)\ \
\text{is a dense $G_\delta$-subset of $X$ whenever
$b^{-1}<e^{\alpha}<e^{\beta}<a^{-1}$.}
$$
By Theorem~LM, $\uu(\F_\omega)=\uuu(ze^\omega T)$ for any
$\omega\in\R$ and $z\in\T$. Hence
$H_{\alpha,\beta}=\uuu\{zT:e^\alpha\leq|z|\leq e^{\beta}\}$. From
the above display it now follows that $\uuu\{zT:b^{-1}<|z|<a^{-1}\}$
is a dense $G_\delta$-subset of $X$ as the intersection of a
countable family of dense $G_\delta$-sets.
\end{proof}

\subsection{Proof of Theorem~\ref{t3a}}

We shall prove Theorem~\ref{t3a} by means of applying
Theorem~\ref{t3}. To do this it suffices to demonstrate that
(\ref{t3}.1) is satisfied. Let $J\subset(a,b)$ be a compact interval
and $V$ be a non-empty open subset of $X$. For any $k\in\N$ let
$O_k=\{c\in(a,b):F_{k,c}\cap V\neq\varnothing\}$.  By (\ref{t3a}.2),
$O_k$ are open subsets of $(a,b)$. According to (\ref{t3a}.3),
$\{O_k:k\in\N\}$ is an open covering of $(a,b)$. Since $J$ is
compact, we can pick $k_1,\dots,k_n\in\N$ such that
$J\subseteq\bigcup\limits_{j=1}^n O_{k_j}$. By (\ref{t3a}.4), there
is $k\in\N$ for which $\bigcup\limits_{j=1}^n F_{k_j,c}\subseteq
F_{k,c}$ for any $c\in(a,b)$. Hence $O_k\supseteq
\bigcup\limits_{j=1}^n O_{k_j}\supseteq J$. It follows that for any
$c\in J$, there is $x\in F_{k,c}\cap V$. According to (\ref{t3a}.1),
there is $w\in\T$ for which $x\in\ker(T^k-wc^kI)$. Thus $V\cap
\bigcup\limits_{w\in\T}\ker(T^k-wc^kI)\neq\varnothing$ for any $c\in
J$. That is, (\ref{t3}.1) is satisfied with $C=J$. It remains to
apply Theorem~\ref{t3} to conclude the proof of Theorem~\ref{t3a}.

\subsection{Proof of Theorem~\ref{t4}}

Recall that a map $h$ from a topological space $X$ to a topological
space $Y$ is called {\it open} if $h(U)$ is open in $Y$ for any open
subset $U$ of $X$. Recall also that a subset $A$ of a connected open
subset $U$ of $\C^m$ is called a {\it set of uniqueness} if any
holomorphic function $\phi:U\to\C$ vanishing on $A$ is identically
zero. The following lemma contains few classical results that can be
found in virtually any book on complex analysis.

\begin{lemma}\label{com1} Let $m\in\N$ and $U$ be a connected open
subset of $\C^m$. Then any non-empty open subset of $U$ is a set of
uniqueness and any non-constant holomorphic map $\phi:U\to\C$ is
open. Moreover, if $m=1$, then any subset of $U$ with at least one
limit point in $U$ is a set of uniqueness.
\end{lemma}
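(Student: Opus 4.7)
The plan is to handle the one-variable assertions by citing the classical identity and open mapping theorems, and then bootstrap to several variables by a clopen argument for uniqueness and a slicing argument for openness. The final sentence of the lemma and the $m=1$ cases of the other two assertions are exactly the classical identity theorem and the open mapping theorem for holomorphic functions of one variable, and I would invoke both without further proof.

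For the first claim in general dimension $m$, suppose $\phi:U\to\C$ is holomorphic and vanishes on a non-empty open $V\subseteq U$. I would consider
\[
Z=\{z\in U:\partial^\alpha\phi(z)=0\text{ for every multi-index }\alpha\in\Z_+^m\}.
\]
Each partial derivative is continuous, so $Z$ is closed in $U$; conversely, if $z_0\in Z$, the Taylor expansion of $\phi$ around $z_0$ has all coefficients zero, which forces $\phi\equiv 0$ on a polydisk around $z_0$ and hence all its derivatives to vanish there, so $Z$ is also open. Since $V\subseteq Z$ is non-empty, connectedness of $U$ gives $Z=U$, hence $\phi\equiv 0$ on $U$.

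For openness of a non-constant holomorphic $\phi:U\to\C$ in general $m$, I plan to slice. Given an open $W\subseteq U$ and $z_0\in W$, I look for $v\in\C^m\setminus\{0\}$ such that $\psi_v(t):=\phi(z_0+tv)$ is not locally constant at $t=0$. If no such $v$ existed, every derivative $\psi_v^{(k)}(0)$ would vanish for all $v$ and all $k$, so by polarization every partial derivative of $\phi$ at $z_0$ would vanish, and the Taylor expansion argument from the previous paragraph would make $\phi$ identically $\phi(z_0)$ on a polydisk around $z_0$; the uniqueness statement just proved, applied to $\phi-\phi(z_0)$, would then force $\phi$ to be constant on $U$, contradicting the hypothesis. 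With such a $v$ chosen and a sufficiently small disk $D$ around $0$ for which $\{z_0+tv:t\in D\}\subseteq W$, the one-variable open mapping theorem applied to the non-constant $\psi_v|_D$ produces an open neighborhood of $\phi(z_0)$ lying inside $\phi(W)$; since $z_0\in W$ was arbitrary, $\phi(W)$ is open. The only step requiring real thought is this slicing bootstrap; the rest is a clopen argument together with direct citations of classical one-variable complex analysis.
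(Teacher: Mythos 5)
Your proposal is correct, but the paper itself gives no proof of Lemma~\ref{com1}: it simply remarks that the lemma ``contains few classical results that can be found in virtually any book on complex analysis'' and cites nothing further. So there is no proof in the paper to compare against; what you have done is supply a self-contained argument for facts the paper takes as known. Your argument is sound. The clopen argument via the set $Z$ of points where all partial derivatives vanish is the textbook proof of the $m$-variable identity theorem for non-empty open sets. For the open mapping theorem in several variables, the slicing reduction to one complex variable is also the standard route; the one place that needs care — that if $\psi_v(t)=\phi(z_0+tv)$ is locally constant at $0$ for every $v$, then all partial derivatives of $\phi$ at $z_0$ vanish — you handle correctly by polarization, using the expansion $\psi_v^{(k)}(0)/k!=\sum_{|\alpha|=k}(\partial^\alpha\phi)(z_0)\,v^\alpha/\alpha!$ and the fact that a homogeneous polynomial vanishing identically has zero coefficients. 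Once a good direction $v$ is found, restricting to a disk lying inside $W$ and invoking the one-variable open mapping theorem produces an open neighborhood of $\phi(z_0)$ inside $\phi(W)$, as you say. Everything you wrote is correct.
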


We need the following generalization of the last statement of
Lemma~\ref{com1} to the case $m>1$. Although it is probably known,
the author was unable to locate a reference.

\begin{lemma}\label{com2} Let $m\in\N$, $U$ be a connected open
subset of $\C^m$, $\phi:U\to\C$ be a non-constant holomorphic map
and $A$ be a subset of $\C$ with at least one limit point in
$\phi(U)$. Then $\phi^{-1}(A)$ is a set of uniqueness. In
particular, if $a=\inf\limits_{z\in U}|\phi(z)|$,
$b=\sup\limits_{z\in U}|\phi(z)|$, $c\in(a,b)$ and $G$ is a dense
subset of $\T$, then $\phi^{-1}(cG)$ is a set of uniqueness.
\end{lemma}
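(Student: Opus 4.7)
The plan is to show that any holomorphic $\psi:U\to\C$ vanishing on $\phi^{-1}(A)$ must be identically zero; by connectedness of $U$ and the first assertion of Lemma~\ref{com1}, it suffices to produce a non-empty open subset of $U$ on which $\psi$ vanishes. Fix a limit point $w_0\in\phi(U)$ of $A$ and choose $z_0\in U$ with $\phi(z_0)=w_0$. I would reduce to the one-variable identity theorem by restricting $\phi$ and $\psi$ to generic complex lines through $z_0$, and then recombine these one-dimensional vanishings into vanishing on a full neighborhood of $z_0$.

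Expand $\phi(z_0+h)=w_0+\sum_{k\geq k_0}P_k(h)$ with $P_{k_0}$ the lowest nonzero homogeneous term, which exists since $\phi$ is non-constant. The set $V=\{v\in\C^m:P_{k_0}(v)\neq 0\}$ is open and dense. For $v\in V$ and $r>0$ small enough that $\{z_0+tv:|t|<r\}\subset U$, the map $f_v(t)=\phi(z_0+tv)$ is a non-constant one-variable holomorphic function on the disk, because $f_v(t)=w_0+t^{k_0}P_{k_0}(v)+O(t^{k_0+1})$. The open mapping assertion of Lemma~\ref{com1} then guarantees that for every $\delta\in(0,r]$, $f_v(\{|t|<\delta\})$ is an open neighborhood of $w_0$. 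Picking a sequence $a_n\in A\setminus\{w_0\}$ with $a_n\to w_0$ and choosing preimages in successively smaller disks produces a sequence $t_n\in\{|t|<r\}\setminus\{0\}$ with $t_n\to 0$ and $f_v(t_n)=a_n$. Hence $g_v(t)=\psi(z_0+tv)$ is a holomorphic function on $\{|t|<r\}$ whose zero set accumulates at $0$, and the $m=1$ case of Lemma~\ref{com1} forces $g_v\equiv 0$.

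Having shown $\psi(z_0+tv)=0$ for all $v\in V$ and all sufficiently small $t$, I would pass to the Taylor expansion $\psi(z_0+h)=\sum_k Q_k(h)$ with $Q_k$ homogeneous of degree $k$. For each fixed $v\in V$, the identity $0=\sum_k t^k Q_k(v)$ valid on a neighborhood of $t=0$ forces $Q_k(v)=0$ for every $k$. Since each $Q_k$ is a polynomial vanishing on the non-empty open set $V\subset\C^m$, each $Q_k$ is identically zero, so $\psi$ vanishes on an open neighborhood of $z_0$ and therefore on all of $U$. The one subtlety worth flagging is the extraction of the preimage sequence with $t_n\to 0$: it is essential to apply the open mapping theorem to $f_v$ on arbitrarily small disks around $0$ rather than just on the fixed disk $\{|t|<r\}$, since otherwise the $t_n$ might accumulate on the boundary.

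For the final assertion it suffices to verify that $cG$ has a limit point in $\phi(U)$. Density of $G$ in $\T$ gives density of $cG$ in the circle $c\T$, so every point of $c\T$ is a limit point of $cG$. Since $a<c<b$, the continuous real function $|\phi|$ takes values strictly below and strictly above $c$ on the connected set $U$, so by the intermediate value theorem it attains $c$ at some $\zeta\in U$; then $\phi(\zeta)\in c\T\cap\phi(U)$ is the required limit point, and the main part of the lemma applies.
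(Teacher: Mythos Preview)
Your argument is correct and shares the paper's core idea: pick $z_0$ with $\phi(z_0)=w_0$ a limit point of $A$, restrict to complex lines through $z_0$, and invoke the one-variable identity theorem. The difference lies in how the two proofs deal with lines on which $\phi$ is constant. You avoid them altogether by isolating the lowest nonzero homogeneous term $P_{k_0}$ of $\phi$ at $z_0$ and working only on the dense open set of directions $V=\{P_{k_0}\neq 0\}$; you then transfer vanishing from these lines to a full neighborhood via the Taylor expansion of $\psi$, using that each homogeneous $Q_k$ is a polynomial vanishing on $V$. The paper instead considers \emph{all} lines: on those where $\phi$ is non-constant it gets $f\equiv 0$, and on those where $\phi$ is constant it gets $\phi-a\equiv 0$, so the product $f\cdot(\phi-a)$ vanishes on every line through $w$, hence on a neighborhood, hence on $U$; since $\phi$ is non-constant this forces $f\equiv 0$ on a nonempty open set. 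Your route is a bit more hands-on with power series but sidesteps the product trick and the second contradiction; the paper's route is slicker but relies on the auxiliary factor $\phi-a$.

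For the ``in particular'' clause both arguments are fine; you use the intermediate value theorem for $|\phi|$ on the connected set $U$ to find a point of $c\T$ in $\phi(U)$, while the paper additionally invokes openness of $\phi$ to note that $cG\cap\phi(U)$ is dense in the relatively open arc $c\T\cap\phi(U)$. Either observation suffices to produce the required limit point.
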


\begin{proof} Assume the contrary. Then there exists a non-zero
holomorphic function $f:U\to\C$ such that
$f\bigr|_{\phi^{-1}(A)}=0$. Let $a\in \phi(U)$ be a limit point of
$A$ and $w\in U$ be such that $\phi(w)=a$. Pick a convex open subset
$V$ of $\C^m$ such that $w\in V\subseteq U$. For any complex
one-dimensional linear subspace $L$ of $\C^m$, $V_L=(w+L)\cap V$ can
be treated as a convex open subset of $\C$. If
$\phi_L=\phi\bigr|_{V_L}$ is non-constant, then by Lemma~\ref{com1},
$\phi_L:V_L\to \C$ is open. Since $a=\phi(w)$ is a limit point of
$A$, it follows that $w$ is a limit point of $\phi_L^{-1}(A)$. Using
the one-dimensional uniqueness theorem, we see that $\phi_L^{-1}(A)$
is a set of uniqueness in $V_L$. Since $f$ vanishes on
$\phi^{-1}(A)\supseteq \phi_L^{-1}(A)$, $f\bigr|_{V_L}=0$. On the
other hand, if $\phi_L$ is constant, then $(\phi-a)\bigr|_{V_L}=0$.
Since $L$ is arbitrary, we have $f(\phi-a)\bigr|_V=0$. Since $V$,
being a non-empty open subset of $U$, is a set of uniqueness, we
have $f\cdot(\phi-a)=0$. Since $f\not\equiv 0$, there is a non-empty
open subset $W$ of $U$ such that $f(z)\neq 0$ for any $z\in W$. The
equality $f\cdot(\phi-a)=0$ implies that $\phi(z)=a$ for any $z\in
W$. Since $W$ is a set of uniqueness, $\phi\equiv a$. We have
arrived to a contradiction. Thus $\phi^{-1}(A)$ is a set of
uniqueness.

Assume now that $a=\inf\limits_{z\in U}|\phi(z)|$,
$b=\sup\limits_{z\in U}|\phi(z)|$, $c\in(a,b)$ and $G$ is a dense
subset of $\T$. Since $U$ is connected $c\T\cap \phi(U)\neq
\varnothing$. Since $\phi$ is open, the set $\phi(U)$ is open in
$\C$. Thus density of $G$ in $\T$ implies that $cG\cap \phi(U)$ is
dense in $c\T\cap \phi(U)$, which is an open subset of $c\T$. Hence
$cG$ has plenty of limit points in $\phi(U)$ and it remains to apply
the first part of the lemma.
\end{proof}

We shall prove Theorem~\ref{t4} by means of applying
Theorem~\ref{t3a}. First, note that density of $\spann\{f(z):z\in
U\}$ implies separability of $X$. Let
$$
F_{k,c}=\spann\{f(z):z\in U,\ \phi(z)^k=c^k\}\ \ \text{for $k\in\N$
and $c\in(a,b)$.}
$$
In order to apply Theorem~\ref{t3a} it suffices to verify that the
map $(k,c)\mapsto F_{k,c}$ satisfies conditions
(\ref{t3a}.1--\ref{t3a}.4). First, from the equality
$Tf(z)=\phi(z)f(z)$ it follows that $T^kx=c^kx$ for any $x\in
F_{k,c}$. Hence (\ref{t3a}.1) is satisfied. Clearly
$F_{k,c}\subseteq F_{m,c}$ whenever $k$ is a divisor of $m$. Hence
for any $c\in (a,b)$ and any $k_1,\dots,k_n\in\N$,
$F_{k_j,c}\subseteq F_{k,c}$ for $1\leq j\leq n$, where
$k=k_1\cdot{\dots}\cdot k_n$. Thus (\ref{t3a}.4) is satisfied. It is
easy to see that
$$
F_c=\bigcup_{k=1}^\infty F_{k,c}=\spann\{f(z):\phi(z)\in c\G\},\ \
\text{where}\ \ \G=\{z\in\T:z^k=1\ \text{for some}\ k\in\N\}.
$$
In order to prove (\ref{t3a}.3), we have to show that $F_c$ is dense
in $X$. Assume the contrary. Since $F_c$ is a vector space and $X$
is locally convex, we can pick $g\in X^*$  such that $g\neq 0$ and
$g(x)=0$ for each $x\in F_c$. In particular, $g(f(z))=0$ whenever
$\phi(z)\in c\G$. By Lemma~\ref{com2}, $\phi^{-1}(c\G)$ is a set of
uniqueness. Since the holomorphic function $g\circ f$ vanishes on
$\phi^{-1}(c\G)$, it is identically zero. Hence $g(f(z))=0$ for any
$z\in U$, which contradicts the density of $\spann\{f(z):z\in U\}$
in $X$. This contradiction completes the proof of (\ref{t3a}.3). It
remains to verify (\ref{t3a}.2). Let $k\in\N$, $V$ be a non-empty
open subset of $X$ and $G=\{c\in(a,b):F_{k,c}\cap
V\neq\varnothing\}$. We have to show that $G$ is open in $\R$. Let
$c\in G$. Then there exist $z_1,\dots,z_n\in U$ and
$\lambda_1,\dots,\lambda_n\in\C$ such that $\phi(z_j)^k=c^k$ for
$1\leq j\leq n$ and $\sum\limits_{j=1}^n\lambda_jf(z_j)\in V$. Since
$f$ is continuous, we can pick $\epsilon>0$ such that
$z_j+\epsilon\D^m\subset U$ for $1\leq j\leq n$ and
$\sum\limits_{j=1}^n\lambda_jf(w_j)\in V$ for any choice of $w_j\in
z_j+\epsilon\D^m$. By Lemma~\ref{com1}, $\phi$ is open and therefore
there exists $\delta>0$ such that $\phi(z_j)+c\delta\D\subseteq
\phi(z_j+\epsilon\D^m)$ for $1\leq j\leq n$. In particular, since
$|\phi(z_j)|=c$, we see that
$(1-\delta,1+\delta)\phi(z_j)\subset\phi(z_j+\epsilon\D)$ for $1\leq
j\leq n$. Hence for each $s\in(1-\delta,1+\delta)$, we can pick
$w_1,\dots,w_n\in U$ such that $w_j\in z_j+\epsilon\D^m$ and
$\phi(w_j)=s\phi(z_j)$ for $1\leq j\leq n$. Then
$\phi(w_j)^k=s^k\phi(z_j)^k=(cs)^k$ and
$\sum\limits_{j=1}^n\lambda_jf(w_j)\in V$ since $w_j\in
z_j+\epsilon\D$. Hence $cs\in G$ for each $s\in(1-\delta,1+\delta)$
and therefore $c$ is an interior point of $G$. Since $c$ is an
arbitrary point of $G$, $G$ is open. This completes the proof of
(\ref{t3a}.2). It remains to apply Theorem~\ref{t3a} to conclude the
proof of Theorem~\ref{t4}.

\subsection{Proof of Corollary~\ref{co1}}

Note that $\H^*$ with the usual norm is a Banach space. Consider the
map $f:U\to \H^*$ defined by the formula $f(z)(x)=x(z)$. It is
straightforward to verify that $f$ is holomorphic,
$M_\phi^*f(z)=\phi(z)f(z)$ for each $z\in U$ and $\spann\{f(z):z\in
U\}$ is dense in $\H^*$. The latter is a consequence of the fact
that evaluation functionals separate points of $\H$. Using
Theorem~\ref{t4}, we immediately obtain that
$G_0=\uuu\{zM_\phi^*:b^{-1}<|z|<a^{-1}\}$ is a dense
$G_\delta$-subset of $\H^*$. Now consider the map $R:\H\to\H^*$,
$Rx(y)=\langle y,x\rangle$, where $\langle\cdot,\cdot\rangle$ is the
scalar product of the Hilbert space $\H$. According to the Riesz
theorem, $R$ is an $\R$-linear isometric isomorphism (it happens to
be complex conjugate linear). It is also easy to see that
$R^{-1}S^*R=S^\star$ for any $S\in L(\H)$, where $S^*$ is the dual
of $S$ and $S^\star$ is the Hilbert space adjoint of $S$. Hence
$G=R^{-1}(G_0)$, where $G=\uuu\{zM_\phi^\star:b^{-1}<|z|<a^{-1}\}$.
Since $R$ is a homeomorphism from $\H$ onto $\H^*$, $G$ is a dense
$G_\delta$-subset of $\H$.

\subsection{Proof of Corollary~\ref{co2}}

Consider the map $f:\C\to\H(\C)$ defined by the formula
$f(w)(z)=e^{wz}$. It is easy to see that $f$ is holomorphic,
$\spann\{f(z):z\in\C\}$ is dense in $\H(\C)$ and for each $w\in\C$,
$\ker(D-wI)=\spann\{f(w)\}$. In particular, $Df(w)=wf(w)$ and using
the equality $TD=DT$, we get $wTf(w)=DTf(w)$ for each $w\in \C$.
Hence $Tf(w)\in \ker(D-wI)=\spann\{f(w)\}$ for any $w\in\C$. Thus
there exists a unique function $\phi:\C\to\C$ such that
$Tf(w)=\phi(w)f(w)$ for each $w\in\C$. Using the fact that $f$ is
holomorphic and each $f(w)$ does not take value $0$, one can easily
verify that $\phi$ is holomorphic. Moreover, since $T$ is not a
scalar multiple of identity, $\phi$ is non-constant. By the Picard
theorem, any non-constant entire function takes all complex values
except for maybe one. Hence $\inf\limits_{w\in\C}|\phi(w)|=0$ and
$\sup\limits_{w\in\C}|\phi(w)|=\infty$. By Theorem~\ref{t4},
$\uuu\{zT:z\in\C^\star\}$ is a dense $G_\delta$-subset of $\H(\C)$.

\subsection{Proof of Corollary~\ref{co3}}

First, we consider the case $\K=\C$. Let $a<\alpha<\beta<b$. By the
assumptions, there is a dense subset $E$ of $X$ and a map $S:E\to E$
such that $TSx=x$, $\alpha^{-n}T^nx\to 0$ and $\beta^nS^nx\to 0$ for
each $x\in E$. Let $U=\{w\in\C:\alpha<|w|<\beta\}$. Since $X$ is
locally convex and complete, the relations $\alpha^{-n}T^nx\to 0$
and $\beta^nS^nx\to 0$ ensure that for each $w\in U$, the series
$\sum\limits_{n=1}^\infty w^{-n}T^nx$ and $\sum\limits_{n=1}^\infty
w^{n}S^nx$ converge in $X$ for any $x\in E$. Thus we can define
$$
u_{x,w}=x+\sum\limits_{n=1}^\infty (w^{-n}T^nx+w^{n}S^nx)\ \
\text{for $w\in U$ and $x\in E$}.
$$
Using the relations $TSx=x$ for $x\in E$ and $T\in L(X)$, one can
easily verify that $Tu_{x,w}=wu_{x,w}$ for each $x\in E$ and $w\in
U$. Now we consider
$$
F_{k,c}=\spann\{u_{x,w}:x\in E,\ w^k=c^k\}\ \ \text{for $k\in\N$ and
$c\in(\alpha,\beta)$}.
$$
We shall show that $F_{k,c}$ for $k\in\N$ and $c\in(\alpha,\beta)$
satisfy conditions (\ref{t3a}.1--\ref{t3a}.4). First, the equality
$Tu_{x,w}=wu_{x,w}$ implies that $T^ky=c^ky$ for any $y\in F_{k,c}$.
Hence (\ref{t3a}.1) is satisfied. Clearly $F_{k,c}\subseteq F_{m,c}$
whenever $k$ is a divisor of $m$. Hence for any $c\in
(\alpha,\beta)$ and any $k_1,\dots,k_n\in\N$, $F_{k_j,c}\subseteq
F_{k,c}$ for $1\leq j\leq n$, where $k=k_1\cdot{\dots}\cdot k_n$.
Thus (\ref{t3a}.4) is satisfied. It is easy to see that
$$
F_c=\bigcup_{k=1}^\infty F_{k,c}=\spann\{u_{x,w}:x\in E,\ w\in
c\G\},\ \ \text{where}\ \ \G=\{z\in\T:z^k=1\ \text{for some}\
k\in\N\}.
$$
In order to prove (\ref{t3a}.3), we have to show that $F_c$ is dense
in $X$. Assume the contrary. Since $F_c$ is a vector space and $X$
is locally convex, we can pick $g\in X^*$  such that $g\neq 0$ and
$g(y)=0$ for each $y\in F_c$. Hence for any $x\in E$ and $w\in c\G$,
we have $f_x(w)=0$, where $f_x(w)=g(u_{x,w})$. It is easy to verify
that for any $x\in E$, the function $f_x:U\to \C$ is holomorphic.
Since $f_x$ vanishes on $c\G$, the uniqueness theorem implies that
each $f_x$ is identically zero. On the other hand, the $0^{\rm th}$
Laurent coefficient of $f_x$ is $g(x)$. Hence $g(x)=0$ for any $x\in
E$. Since $E$ is dense in $X$, we get $g=0$. This contradiction
completes the proof of (\ref{t3a}.3). It remains to verify
(\ref{t3a}.2). Let $k\in\N$, $V$ be a non-empty open subset of $X$
and $G=\{c\in(\alpha,\beta):F_{k,c}\cap V\neq\varnothing\}$. We have
to show that $G$ is open in $\R$. Let $c\in G$. Then there exist
$x_1,\dots,x_n\in E$ and $w_1,\dots,w_n,\lambda_1,\dots,\lambda_n\in
\C$ such that $w_j^k=c^k$ for $1\leq j\leq n$ and
$\sum\limits_{j=1}^n\lambda_j u_{x_j,w_j}\in V$. Since for any fixed
$x\in E$, the map $w\mapsto u_{x,w}$ is continuous, there is
$\delta>0$ such that $y_s\in V$ if $|c-s|<\delta$, where
$y_s=\sum\limits_{j=1}^n\lambda_j u_{x_j,sw_j/c}$. On the other
hand, $y_s\in E_{k,s}$ for each $s$ and therefore
$(c-\delta,c+\delta)\cap(\alpha,\beta)\subseteq G$. Hence $c$ is an
interior point of $G$. Since $c$ is an arbitrary point of $G$, $G$
is open. This completes the proof of (\ref{t3a}.2). By
Theorem~\ref{t4}, $\uuu\{zT:\beta^{-1}<|z|<\alpha^{-1}\}$ is a dense
$G_\delta$-set whenever $a<\alpha<\beta<b$. Hence the set of common
hypercyclic vectors of the family $\{zT:b^{-1}<|z|<a^{-1}\}$ is a
dense $G_\delta$-subset of $X$ as a countable intersection of dense
$G_\delta$-sets. The proof of Corollary~\ref{co3} in the case
$\K=\C$ is complete.

Assume now that $\K=\R$. Let $X_\C=X\oplus iX$ and
$T_\C(u+iv)=Tu+iTv$ be complexifications of $X$ and $T$
respectively. It is straightforward to see that $T_\C$ satisfies the
same conditions with $E_\C=E+iE$ and $S_\C(u+iv)=Su+iSv$ taken as
$E$ and $S$. Corollary~\ref{co3} in the complex case implies that
$H_0=\uuu\{zT_\C:z\in\C,\ b^{-1}<|z|<a^{-1}\}$ is a dense
$G_\delta$-subset of $X_\C$. Clearly $H=\uuu\{zT:z\in\R,\
b^{-1}<|z|<a^{-1}\}$ contains the projection of $H_0$ onto $X$ along
$iX$ and therefore in dense in $X$. The fact that $H$ is a
$G_\delta$-subset of $X$ follows from Corollary~\ref{gc2}.

\section{Counterexamples on hypercyclic scalar multiples}

We find operators, whose existence is assured by Theorem~\ref{t5} in
the class of bilateral weighted shifts on $\ell_2(\Z)$. Recall that
if $w=\{w_n\}_{n\in\Z}$ is a bounded sequence of non-zero scalars,
then the unique $T_w\in L(\ell_2(\Z))$ such that $T_we_n=w_ne_{n-1}$
for $n\in\Z$, where $\{e_n\}_{n\in\Z}$ is the canonical orthonormal
basis of the Hilbert space $\ell_2(\Z)$, is called the {\it
bilateral weighted shift with the weight sequence} $w$.
Hypercyclicity of bilateral weighted shifts was characterized by
Salas \cite{sal}, whose necessary and sufficient condition is
presented in a more convenient shape in \cite{sh2}.

\begin{thmS}
Let $T_w$ be a bilateral weighted shift on $\ell_2(\Z)$. Then $T_w$
is hypercyclic if and only if for any $k\in\Z_+$,
\begin{equation}
\ilim\limits_{n\to\infty}(\ww(k-n+1,k)+ \ww(k+1,k+n)^{-1})=0,\ \
\text{where}\ \ww(a,b)=\smash{\prod_{j=a}^b}\,|w_j|\ \text{for}\
a,b\in\Z,\ a\leq b. \label{sal3}
\end{equation}
\end{thmS}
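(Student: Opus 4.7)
My plan is to handle the two directions separately: the ``if'' direction by verifying the Kitai--Gethner--Shapiro hypercyclicity criterion along a suitable subsequence, and the ``only if'' direction by a direct coordinate analysis of the orbit of a hypercyclic vector.

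For sufficiency, assume (\ref{sal3}) holds for every $k\in\Z_+$. The first step is a Cantor diagonal extraction producing a single sequence $n_j\to\infty$ along which $\ww(k-n_j+1,k)\to 0$ and $\ww(k+1,k+n_j)^{-1}\to 0$ simultaneously for every $k\in\Z_+$. Since $T_w$ is bounded one has $|w_n|\leq C$ for some $C$, and the factorizations
$$
\ww(-\ell-n+1,-\ell)=\ww(-\ell-n+1,-n)\,\ww(-n+1,0)\,\ww(-\ell+1,0)^{-1},
$$
$$
\ww(-\ell+1,-\ell+n)=\ww(-\ell+1,0)\,\ww(1,n-\ell),
$$
combined with $\ww(-\ell-n+1,-n)\leq C^\ell$ and $\ww(1,n-\ell)\geq C^{-\ell}\,\ww(1,n)$, extend the same convergence along $(n_j)$ to all $k\in\Z$. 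I then apply the hypercyclicity criterion with the dense subspace $X_0=\spann\{e_k:k\in\Z\}$ and the formal right inverse $S:X_0\to X_0$ given by $Se_k=w_{k+1}^{-1}e_{k+1}$, which satisfies $T_wS=I$ on $X_0$. The basis norm estimates $\|T_w^{n_j}e_k\|=\ww(k-n_j+1,k)\to 0$ and $\|S^{n_j}e_k\|=\ww(k+1,k+n_j)^{-1}\to 0$ extend by linearity to $X_0$, and together with $T_w^{n_j}S^{n_j}=I$ on $X_0$ yield the three hypotheses of the criterion, producing a dense $G_\delta$-set of hypercyclic vectors.

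For necessity, suppose $T_w$ is hypercyclic and fix $k\in\Z_+$. Since $\uuu(T_w)$ is residual in $\ell_2(\Z)$ while $\{x:x_k=0\}$ is a closed, nowhere-dense subspace, I can pick a hypercyclic vector $x$ with $x_k\neq 0$. Density of the orbit provides $n_j\to\infty$ with $T_w^{n_j}x\to e_k$, and norm convergence forces componentwise convergence. Two specific coordinates do all the work. The $k$-th coordinate gives $(T_w^{n_j}x)_k=x_{k+n_j}\prod_{i=k+1}^{k+n_j}w_i\to 1$, hence $|x_{k+n_j}|\,\ww(k+1,k+n_j)\to 1$; since $x\in\ell_2(\Z)$ forces $|x_{k+n_j}|\to 0$, this yields $\ww(k+1,k+n_j)^{-1}\to 0$. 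The $(k-n_j)$-th coordinate gives $(T_w^{n_j}x)_{k-n_j}=x_k\prod_{i=k-n_j+1}^{k}w_i\to 0$ because $(e_k)_{k-n_j}=0$, hence $|x_k|\,\ww(k-n_j+1,k)\to 0$ and then $\ww(k-n_j+1,k)\to 0$. The sum of these two quantities tends to zero along $(n_j)$, so $\ilim_n(\ww(k-n+1,k)+\ww(k+1,k+n)^{-1})=0$.

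The main technical point lies in the sufficiency: the diagonal extraction only delivers convergence for $k\in\Z_+$, and extending it to $k<0$ genuinely requires the boundedness of the weights to control the extra telescoping factors. Once that is in place the remainder is a routine verification of the hypercyclicity criterion, and the necessity is remarkably clean because the $k$-th and $(k-n_j)$-th coordinates of $T_w^{n_j}x$ independently encode the two quantities whose sum must vanish.
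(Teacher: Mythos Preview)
The paper does not prove Theorem~S; it quotes it as a known result of Salas \cite{sal}, in the reformulation from \cite{sh2}, and uses it as a black box in Section~5. So there is no in-paper proof to compare against.

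Your argument is correct and is essentially the standard one. The sufficiency via the Kitai--Gethner--Shapiro criterion with the formal right inverse $S e_k = w_{k+1}^{-1} e_{k+1}$ is the usual route; your diagonal extraction and the extension to negative $k$ via the telescoping identities and the bound $|w_n|\leq C$ are sound (for the second identity you implicitly need $n_j>\ell$, which is harmless since $n_j\to\infty$). The necessity is also fine: the key point that $|(T_w^{n_j}x)_{k-n_j}|\leq\|T_w^{n_j}x-e_k\|\to 0$ is legitimate even though the coordinate index varies with $j$, and your choice of a hypercyclic $x$ with $x_k\neq 0$ is available because $\uuu(T_w)$ is dense and $\{x:x_k=0\}$ is a proper closed hyperplane.
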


It is well-known and easy to see that a bilateral weighted shift
$T_w$ is invertible if and only if $\inf\limits_{n\in\Z}|w_n|>0$. In
this case condition (\ref{sal3}) can be rewritten in the following
simpler form.

\begin{thmSs}
Let $T_w$ be an invertible bilateral weighted shift on $\ell_2(\Z)$.
Then $T_w$ is hypercyclic if and only if
\begin{equation}
\ilim\limits_{n\to\infty}(\ww(-n,0)+\ww(0,n)^{-1})=0. \label{sal4}
\end{equation}
\end{thmSs}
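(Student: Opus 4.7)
The plan is to deduce Theorem~S$'$ from Theorem~S by showing that, when $T_w$ is invertible, the family of conditions (\ref{sal3}) indexed by $k\in\Z_+$ all collapse to the single condition (\ref{sal4}). The starting point is the elementary observation that invertibility forces both $M:=\|T_w\|$ and $\epsilon:=\|T_w^{-1}\|^{-1}$ to be strictly positive and finite; since $T_we_n=w_ne_{n-1}$ and $T_w^{-1}e_{n-1}=w_n^{-1}e_n$, this yields the two-sided uniform bound $\epsilon\leq|w_n|\leq M$ for every $n\in\Z$, which is the key ingredient of the reduction.

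Fix $k\in\Z_+$ and an integer $n>k$. Using the convention that the empty product equals $1$ and splitting ranges of indices, I would write
$$
\ww(k-n+1,k)=\ww(k-n+1,0)\,\ww(1,k)\quad\text{and}\quad \ww(-n,0)=\ww(-n,k-n)\,\ww(k-n+1,0),
$$
which combine to give $\ww(k-n+1,k)=\ww(1,k)\,\ww(-n,k-n)^{-1}\,\ww(-n,0)$. The factor $\ww(-n,k-n)$ is a product of exactly $k+1$ consecutive weights, hence lies in the compact interval $[\epsilon^{k+1},M^{k+1}]$, while $\ww(1,k)$ depends only on $k$. Consequently $\ww(k-n+1,k)$ and $\ww(-n,0)$ differ by a multiplicative factor bounded above and below by constants depending only on $k$. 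An entirely parallel computation, based on the factorisations $\ww(k+1,k+n)=\ww(0,k+n)/\ww(0,k)$ and $\ww(0,k+n)=\ww(0,n)\,\ww(n+1,k+n)$, shows that $\ww(k+1,k+n)^{-1}$ and $\ww(0,n)^{-1}$ are likewise comparable up to $k$-dependent positive constants.

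It follows that for every fixed $k\in\Z_+$ the $k$-th instance of (\ref{sal3}) is equivalent to (\ref{sal4}); in particular the conjunction of (\ref{sal3}) over all $k\in\Z_+$ is equivalent to the single statement (\ref{sal4}). Invoking Theorem~S then finishes the argument. I do not anticipate any genuine obstacle here: the proof is essentially bookkeeping, and the only point requiring care is to check, when peeling off initial or terminal segments of a product, that the residual factor collects exactly $k+1$ (respectively $k$) weights, all of which are controlled by the two-sided bound $\epsilon\leq|w_n|\leq M$ supplied by invertibility.
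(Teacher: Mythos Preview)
Your proposal is correct and follows exactly the route the paper indicates: the paper does not give a detailed proof of Theorem~S$'$, but simply remarks that when $T_w$ is invertible (equivalently, $\inf_{n\in\Z}|w_n|>0$), condition~(\ref{sal3}) ``can be rewritten in the following simpler form'' (\ref{sal4}). Your argument spells out this reduction precisely, using the two-sided weight bound to show that for each fixed $k$ the quantities $\ww(k-n+1,k)$ and $\ww(k+1,k+n)^{-1}$ are comparable, up to $k$-dependent multiplicative constants, to $\ww(-n,0)$ and $\ww(0,n)^{-1}$ respectively; this is exactly the intended bookkeeping, and there is nothing to add.
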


\subsection{Proof of Theorem~\ref{t5}, Part II}

First, we prove few elementary lemmas. The following one generalizes
the fact that the set of hypercyclic vectors of a hypercyclic
operator is dense.

\begin{lemma}\label{el} Let $X$ be a topological vector space
and $\cal A$ be a family of pairwise commuting continuous linear
operators on $X$. Then the set $\uuu({\cal
A})=\bigcap\limits_{T\in{\cal A}}\uuu(T)$ is either empty or dense
in $X$.
\end{lemma}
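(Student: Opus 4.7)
The plan is to adapt the classical argument showing that for a single hypercyclic operator $T$ the orbit of any hypercyclic vector consists entirely of hypercyclic vectors, and hence $\uuu(T)$ is dense. The pairwise commutativity of $\cal A$ will allow the same reasoning to go through simultaneously for every member of the family.

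First, assume $\uuu({\cal A})$ is non-empty and fix any $x\in\uuu({\cal A})$. Note that then each $T\in{\cal A}$ is in particular hypercyclic, so in particular every $T\in{\cal A}$ has dense range. Pick an arbitrary $T_0\in{\cal A}$. Since $x\in\uuu(T_0)$, the orbit $\{T_0^n x:n\in\Z_+\}$ is dense in $X$, so it suffices to show that $T_0^n x\in\uuu({\cal A})$ for every $n\in\Z_+$.

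Fix $n\in\Z_+$ and any $S\in{\cal A}$. Using the commutation $ST_0=T_0S$, one gets $S^kT_0^n x=T_0^n S^k x$ for every $k\in\Z_+$. Hence the orbit of $T_0^n x$ under $S$ equals $T_0^n(\{S^k x:k\in\Z_+\})$. Since $x\in\uuu(S)$, the set $\{S^kx:k\in\Z_+\}$ is dense in $X$. The last ingredient is that a continuous linear map with dense range sends dense sets to dense sets: if $A\subseteq X$ is dense and $U\subseteq X$ is non-empty open, then $T_0^{-n}(U)$ is open, and is non-empty because $T_0^n$ has dense range (dense range is preserved under compositions by the elementary fact that $T_0^{-1}(V)$ is non-empty open whenever $V$ is non-empty open, using continuity and density of $T_0(X)$). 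Density of $A$ then produces an element of $A\cap T_0^{-n}(U)$, whose $T_0^n$-image lies in $U\cap T_0^n(A)$. Applied with $A=\{S^kx:k\in\Z_+\}$, this yields density of the orbit of $T_0^n x$ under $S$, so $T_0^n x\in\uuu(S)$.

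Since $S\in{\cal A}$ was arbitrary, $T_0^n x\in\uuu({\cal A})$ for every $n$, and by density of $\{T_0^n x\}$ the set $\uuu({\cal A})$ is dense in $X$. The only subtlety is the general topological vector space verification that a continuous linear operator with dense range preserves denseness; everything else is a direct manipulation with the commutation relations.
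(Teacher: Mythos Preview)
Your proof is correct and follows essentially the same approach as the paper: fix $x\in\uuu({\cal A})$, pick one operator in the family, show that its orbit of $x$ lies entirely in $\uuu({\cal A})$ by using commutativity and dense range, and conclude density. Your write-up is slightly more explicit about why $T_0^n$ has dense range and why such maps preserve density, but the underlying argument is identical (with your $T_0$ and $S$ playing the roles of the paper's $S$ and $T$, respectively).
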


\begin{proof} Let $x\in \uuu({\cal A})$ and $S\in{\cal A}$.
We have to show that $\uuu({\cal A})$ is dense in $X$. Since $x$ is
a hypercyclic vector for $S$, $O(S,x)=\{S^nx:n\in\Z_+\}$ is dense in
$X$ and therefore $S$ has dense range. Take any $T\in{\cal A}$.
Since $TS=ST$, $O(T,S^mx)=S^m(O(T,x))$ for each $m\in\Z_+$. Since
$x\in\uuu(T)$ and $S^m$ has dense range, $O(T,S^mx)$ is dense in
$X$. Hence $S^mx\in\uuu(T)$ for any $T\in{\cal A}$ and $m\in\Z_+$.
That is, $O(S,x)\subseteq \uuu({\cal A})$. Since $O(S,x)$ is dense
in $X$, so is $\uuu({\cal A})$.
\end{proof}

\begin{lemma} \label{inter} Let $X$ be a locally convex topological
vector space, $T\in L(X)$, $A\subseteq (0,\infty)$ and
$x\in\uuu\{cT:c\in\Lambda\}$. Assume also that there exists a
non-empty open subset $U$ of $X$ such that
\begin{equation}\label{qu}
\sum\limits_{n\in Q_U}n^{-1}<\infty,\quad\text{where}\quad
Q_U=\{n\in\N:a^nT^nx\in U\ \ \text{for some}\ \ a\in A\}.
\end{equation}
Then $A$ has zero Lebesgue measure.
\end{lemma}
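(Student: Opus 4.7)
My plan is to prove that $|A\cap[\alpha,\beta]|=0$ for every compact subinterval $[\alpha,\beta]\subset(0,\infty)$, which suffices by $\sigma$-additivity applied to $(0,\infty)=\bigcup_k[k^{-1},k]$. Set $E_n=\{a>0:a^nT^nx\in U\}$ for each $n\in\N$; then $Q_U=\{n:E_n\neq\varnothing\}$. Since $x\in\uuu(aT)$ for every $a\in A$, the orbit $\{a^nT^nx:n\in\Z_+\}$ is dense in $X$ and hence meets the open set $U$ for infinitely many $n$. So every $a\in A$ lies in $E_n$ for infinitely many $n$, giving $A\subseteq\limsup_{n\to\infty}E_n$. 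By the Borel--Cantelli lemma, once I establish $\sum_{n\in Q_U}|E_n\cap[\alpha,\beta]|<\infty$, the conclusion $|A\cap[\alpha,\beta]|=0$ follows.

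To obtain this estimate I would first replace $U$ by a smaller open set with controllable geometry. Pick $y_1\in U\setminus\{0\}$; by local convexity and the fact that $X$ is Hausdorff (hence $X^*\neq\{0\}$), there is a continuous seminorm $p$ on $X$ with $p(y_1)>0$ and an $r\in(0,p(y_1))$ such that $U'=\{y\in X:p(y-y_1)<r\}\subseteq U$. Since $U'\subseteq U$, defining $E'_n=\{a>0:a^nT^nx\in U'\}$ still gives $A\subseteq\limsup E'_n$ and $\sum_{n\in Q_{U'}}n^{-1}\leq\sum_{n\in Q_U}n^{-1}<\infty$. The condition $a^nT^nx\in U'$ together with the triangle inequality for $p$ yields $a^np(T^nx)\in(p(y_1)-r,p(y_1)+r)$; since $p(y_1)-r>0$, this forces $p(T^nx)>0$ and
$$a\in\Bigl(\bigl((p(y_1)-r)/p(T^nx)\bigr)^{1/n},\bigl((p(y_1)+r)/p(T^nx)\bigr)^{1/n}\Bigr).$$

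For each $n\in Q_{U'}$ such that $E'_n\cap[\alpha,\beta]\neq\varnothing$, picking a witness $a\in[\alpha,\beta]$ and inserting it into the interval above gives $p(T^nx)^{1/n}\geq(p(y_1)-r)^{1/n}/\beta$, so $p(T^nx)^{-1/n}\leq 2\beta$ for all sufficiently large $n$. Combined with the mean-value-theorem estimate $(p(y_1)+r)^{1/n}-(p(y_1)-r)^{1/n}=O(1/n)$, this yields $|E'_n\cap[\alpha,\beta]|\leq C/n$ for a constant $C$ depending only on $\alpha,\beta,p(y_1),r$. Summing gives $\sum_{n\in Q_{U'}}|E'_n\cap[\alpha,\beta]|\leq C\sum_{n\in Q_{U'}}n^{-1}<\infty$, and Borel--Cantelli finishes the argument. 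The main obstacle is the uniform bound $p(T^nx)^{-1/n}\leq 2\beta$: this is what forces me to shrink $U$ to a set bounded away from the origin (namely, $p(y_1)-r>0$), because otherwise nothing would prevent $p(T^nx)$ from being arbitrarily small and the length bound $C/n$ would break.
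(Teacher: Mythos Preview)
Your proof is correct and follows essentially the same strategy as the paper's: shrink $U$ to a set on which a continuous seminorm is bounded away from~$0$ and from above, so that $a^nT^nx\in U'$ pins $a$ into an interval of length $O(1/n)$, and then invoke the summability hypothesis (you phrase the last step as Borel--Cantelli, the paper as a direct covering).

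The one difference is purely cosmetic but worth noting. The paper passes to $\Lambda=\ln A$ and takes $V=U\cap\{u:1<p(u)<e\}$; then $e^{\alpha n}T^nx\in V$ gives $\alpha\in\bigl(-\tfrac{\ln p(T^nx)}{n},\tfrac{1-\ln p(T^nx)}{n}\bigr)$, an interval of length \emph{exactly} $1/n$. This bypasses both the mean-value estimate for $(p(y_1)+r)^{1/n}-(p(y_1)-r)^{1/n}$ and the auxiliary bound $p(T^nx)^{-1/n}\leq 2\beta$ (and hence the need to localize to $[\alpha,\beta]$). Your route works too, but the logarithmic change of variables is the cleaner packaging of the same idea.
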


\begin{proof} Clearly we can assume that $A\neq\varnothing$
and therefore $\Lambda\neq\varnothing$, where $\Lambda=\ln(A)=\{\ln
a:a\in A\}$. Since $X$ is Hausdorff and locally convex, we can find
a continuous seminorm $p$ on $X$ such that $V=U\cap \{u\in
X:1<p(u)<e\}$ is non-empty. It suffices to show that $\Lambda$ has
zero Lebesgue measure.  Let $\alpha\in\Lambda$ and $m\in\N$. Since
$x$ is hypercyclic for $e^{\alpha}T$ and $V$ is open, we can find
$n\geq m$ such that $e^{\alpha n}T^n\in V\subseteq U$. Then $n\in
Q_U$ and $p(e^{\alpha n}T^nx)\in (1,e)$. Hence
$$
\alpha\in (\alpha_n,\beta_n),\ \ \text{where}\ \
\alpha_n=\frac{-\ln(p(T^nx))}{n}\ \ \text{and}\ \
\beta_n=\frac{1-\ln(p(T^nx))}{n}.
$$
Since $\alpha\in \Lambda$ is arbitrary, we obtain
$$
\Lambda\subseteq\bigcup_{n\in Q_U,\ n\geq m}(\alpha_n,\beta_n)\ \
\text{for any $m\in\N$}.
$$
On the other hand, $(\alpha_n,\beta_n)$ is an interval of length
$n^{-1}$. Then (\ref{qu}) and the last display imply that $\Lambda$
can be covered by intervals with arbitrarily small sum of lengths.
That is, $\Lambda$ has zero Lebesgue measure.
\end{proof}

For $k\in\N$, we denote
\begin{equation}\label{III}
\begin{array}{l}\textstyle m_k=2^{3k^2},\ \ I_k^{-}=\{n\in\N:\frac78m_k\leq n<m_k\},\ \
I_k^{+}=\{n\in\N:m_k<n\leq \frac98m_k\}\\
\text{and}\ \ I_k=I_k^{-}\cup
I_k^{+}\cup\{m_k\}=\{n\in\N:\frac78m_k\leq n\leq
\frac98m_k\}.\end{array}
\end{equation}

Consider the sequence $w=\{w_n\}_{n\in\Z}$ defined by the formula
\begin{equation}\label{wp2}
w_n=\left\{\begin{array}{ll}2^8&\text{if}\ \ n\in I_k^-\cup-I_k^+,\
k\in\N\\ 2^{-8}&\text{if}\ \ n\in I_k^+\cup-I_k^-,\ k\in\N\\
1&\text{otherwise.}\end{array}\right.
\end{equation}
Clearly $w$ is a sequence of positive numbers and
$0<2^{-8}=\inf\limits_{n\in\Z}w_n<\sup\limits_{n\in\Z}w_n=2^8<\infty$.
Hence $T_w$ is an invertible bilateral weighted shift. In order to
prove Part~II of Theorem~\ref{t5} it is enough to verify the
following statement.

\begin{example}\label{p2t5} Let $w$ be the weight sequence defined
by $(\ref{wp2})$ and $T=T_w$  be the corresponding bilateral
weighted shift on $\ell_2(\Z)$. Then $M_T=(1/2,2)$ and any
$\Lambda\subseteq(1/2,2)$ has Lebesgue measure $0$ if the family
$\{aT:a\in\Lambda\}$ has a common hypercyclic vector.
\end{example}

\begin{proof} Using the definition (\ref{wp2}) of the sequence $w$, it is easy to
verify that for any $n\in\N$,
\begin{equation}\label{wwp2}
\beta(n)=\left\{\begin{array}{ll}2^{8n-7m_k+8}&\text{if}\ \ n\in
I_k^-,\ k\in\N,\\ 2^{9m_k-8n}&\text{if}\ \ n\in I_k^+,\ k\in\N,\\
1&\text{otherwise,}\end{array}\right. \qquad\text{where}\quad
\beta(n)=\prod_{j=0}^n w_j.
\end{equation}
Moreover, $w_n^{-1}=w_{-n}$ for any $n\in\Z$. Using this fact and
the equality $w_0=1$, we get
\begin{equation}\label{wwp21}
\ww(j,n)=\left\{\begin{array}{ll}\beta(n)\beta(j-1)^{-1}&\text{if}\
\ j\geq 1,\\ \beta(-1-n)\beta(-j)^{-1}&\text{if}\ \ n\leq -1,\\
\beta(n)\beta(-j)^{-1}&\text{if}\ j\leq 0,\ \text{and}\ n\geq
0\end{array}\right. \qquad\text{for any $j,n\in\Z$, $j\leq n$,}
\end{equation}
where the numbers $\ww(j,n)$ are defined in (\ref{sal3}). In
particular, $\ww(0,n)=\beta(n)$ and $\ww(-n,0)=\beta(n)^{-1}$ for
each $n\in\N$. This observation together with Theorem~${\rm S}'$ and
the fact that $aT=T_{aw}$ for $a\neq 0$ imply that for $a>0$,
\begin{equation}\label{hy1}
\text{$aT$ is hypercyclic if and only if}\quad
\ilim_{n\to\infty}\beta(n)^{-1}\bigl(a^n+a^{-n}\bigr)=0.
\end{equation}
By (\ref{wwp2}), $1\leq \beta(n)\leq 2^n$ for $n\in\N$, which
together with (\ref{hy1}) implies that $M_T\subseteq (1/2,2)$. On
the other hand, by (\ref{wwp2}), $\beta(m_k)=2^{m_k}$ for each
$k\in\N$. Hence $\beta(m_k)^{-1}\bigl(a^{m_k}+a^{-m_k}\bigr)\to 0$
as $k\to\infty$ for any $a\in(1/2,2)$. According to (\ref{hy1}),
$aT$ is hypercyclic if $1/2<a<2$. Hence $M_T=(1/2,2)$.

Let now $\Lambda$ be a non-empty subset of $(1/2,2)$ such that the
family $\{aT:a\in\Lambda\}$ has common hypercyclic vectors. We have
to demonstrate that $\Lambda$ has zero Lebesgue measure. Pick
$\epsilon>0$ such that $\frac{\epsilon}{1-\epsilon}<2^{-8}$. By
Lemma~\ref{el}, there is a common hypercyclic vector $x$ of the
family $\{aT:a\in\Lambda\}$ such that $\|x-e_{-1}\|<\epsilon$. Let
$$
\smash{Q=\{n\in\N:\|a^nT^nx-e_0\|<\epsilon\ \ \text{for some}\ \
a\in\Lambda\}\ \ \text{and}\ \ J=\bigcup_{k=1}^\infty I_k.}
$$
First, we show that $Q\subseteq J$. Let $n\in Q$. Then there is
$a\in\Lambda$ such that $\|a^nT^nx-e_0\|<\epsilon$. Hence
$$
\text{$|\langle a^nT^nx,e_0\rangle|>1-\epsilon$ \ and \ $|\langle
a^{n}T^nx,e_{-n-1}\rangle|<\epsilon$.}
$$
Using (\ref{wwp21}), we get $\langle
a^nT^nx,e_0\rangle=a^n\beta(n)x_n$ and $\langle
a^nT^nx,e_{-n-1}\rangle=a^n\beta(n)^{-1}x_{-1}$. Then from the last
display it follows that
$$
\text{$a^n\beta(n)|x_n|>1-\epsilon$ \ and \
$a^n\beta(n)^{-1}w_n|x_{-1}|<\epsilon$}.
$$
Since $\|x-e_{-1}\|<\epsilon$, $|x_{-1}|>1-\epsilon$ and
$|x_n|<\epsilon$. Then according to the last display,
$$
\beta(n)>\frac{1-\epsilon}{\epsilon}\max\{a^n,a^{-n}\}\geq
\frac{1-\epsilon}{\epsilon}>2^8>1.
$$
By (\ref{wwp2}), $\beta(j)=1$ if $j\notin J$. Hence $n\in J$. Since
$n$ is an arbitrary element of $Q$, we get $Q\subseteq J$.

Next, we show that $(Q-Q)\cap \N\subseteq J$. Indeed, let $m,n\in Q$
be such that $m>n$. Since $m,n\in Q$, we can pick $a,b\in\Lambda$
such that $\|a^nT^nx-e_0\|<\epsilon$ and $\|b^mT^mx-e_0\|<\epsilon$.
In particular,
$$
\text{$|a^nT^nx,e_0\rangle|>1-\epsilon$, $|\langle
b^mT^mx,e_0\rangle|>1-\epsilon$, $|\langle
a^nT^nx,e_{m-n}\rangle|<\epsilon$ and $|\langle
b^mT^mx,e_{n-m}\rangle|<\epsilon$}.
$$
Using (\ref{wwp21}), we get
$$
\begin{array}{ll}
\text{$\langle a^nT^nx,e_0\rangle=a^n\beta(n)x_n$},& \text{$\langle
a^nT^nx,e_{m-n}\rangle=
a^n\beta(m)\beta(m-n)^{-1}x_m$}, \\
\text{$\langle b^mT^mx,e_0\rangle=b^m\beta(m)x_m$}, &\text{$\langle
b^mT^mx,e_{n-m}\rangle=b^m\beta(n)\beta(m-n-1)^{-1}x_n$}.
\end{array}
$$
According to the last two displays,
$$
\beta(m-n-1)>\frac{1-\epsilon}{\epsilon}a^nb^{-m}\ \ \text{and}\ \
\beta(m-n)>\frac{1-\epsilon}{\epsilon}a^{-n}b^m.
$$
Since $\beta(m-n)=\beta(m-n-1)w_{m-n}\geq 2^{-8}\beta(m-n-1)$ from
the last display it follows that
$$
\beta(m-n)>2^{-8}\frac{1-\epsilon}{\epsilon}\max\{a^nb^{-m},a^{-n}b^m\}\geq
2^{-8}\frac{1-\epsilon}{\epsilon}>1.
$$
Since $\beta(j)=1$ if $j\notin J$, we have $m-n\in J$. Hence
$(Q-Q)\cap \N\subseteq J$.

Let now $k\in\N$ and $m,n\in Q\cap I_k$ be such that $m>n$. Since
$(Q-Q)\cap \N\subseteq J$, we have $m-n\in J$. Since $m,n\in I_k$,
we get $m-n\leq \frac{m_k}4<\frac{7m_k}{8}=\min I_k$ . Hence $m-n\in
\bigcup\limits_{j=0}^{k-1}I_j$, where $I_0=\varnothing$. Then
$|m-n|\leq \frac{9m_{k-1}}{8}<2m_{k-1}$, where $m_0=1$. Hence $Q\cap
I_k$ has at most $2m_{k-1}$ elements. On the other hand, $n\geq
\frac{7m_k}{8}\geq \frac{m_k}2$ for any $n\in I_k$ and therefore
$$
\sum_{n\in Q\cap I_k}n^{-1}\leq 2m_{k-1}
\frac{2}{m_k}=\frac{4m_{k-1}}{m_k}\leq 2^{-k},
$$
where the last inequality follows from the definition of $m_k$.
Since $Q\subseteq J$ and $J$ is the union of disjoint sets $I_k$, we
obtain
$$
\sum_{n\in Q}n^{-1}=\sum_{k=1}^\infty\sum_{n\in Q\cap
I_k}n^{-1}\leq\sum_{k=1}^\infty 2^{-k}=1<\infty.
$$
Using the definition of $Q$ and Lemma~\ref{inter}, we now see that
$\Lambda$ has zero Lebesgue measure.
\end{proof}

\subsection{Proof of Theorem~\ref{t5}, Part~I}

Consider the sequences $\{a_n\}_{n\in\Z}$ and $\{w_n\}_{n\in\Z}$
defined by the formulae
\begin{equation}\label{an}
a_n=\left\{
\begin{array}{ll}
1&\text{if $|n|\leq 5$ or $-2\cdot5^k\leq n<-5^k$}\\
&\text{or $-5^{k+1}\leq n<-4\cdot 5^k$, $k\in\N$,}\\
8^{-1}&\text{if $-3\cdot5^k\leq n<-2\cdot5^k$, $k\in\N$,}\\
8&\text{if $-4\cdot5^k\leq n<-3\cdot5^k$, $k\in\N$,}\\
2^{-1}&\text{if $2\cdot5^k<n\leq4\cdot5^k$, $k\in\N$,}\\
4^{-1}&\text{if $5^k<n\leq2\cdot5^k$, $k\in\N$,}\\
16&\text{if $4\cdot5^k<n\leq5^{k+1}$, $k\in\N$;}
\end{array}
\right. \qquad w_n=\left\{
\begin{array}{ll}
1&\text{if $|n|\leq 1$,}\\
n(n-1)^{-1}a_n&\text{if $n\geq 2$,}\\
(n+1)n^{-1}a_n&\text{if $n\leq -2$.}
\end{array}
\right.
\end{equation}
It is easy to see that $w$ is a bounded sequence of positive numbers
and $\inf\limits_{n\in\Z}w_n>0$. Hence the bilateral weighted shift
$T_w$ is invertible. In order to prove Part~I of Theorem~\ref{t5} it
is enough to verify the following statement.

\begin{example}\label{p1t5} Let $w$ be the weight sequence defined
by $(\ref{an})$ and $S=T_w$  be the corresponding bilateral weighted
shift on $\ell_2(\Z)$. Then $M_S=\{1,2\}$.
\end{example}

\begin{proof} Using (\ref{an}), one can
easily verify that
\begin{align}\label{gamma1}
\gamma_+(n)&=\left\{
\begin{array}{ll}
4^{5^k-n}&\text{if $5^k<n\leq2\cdot5^k$, $k\in\N$,}\\
2^{-n}&\text{if $2\cdot5^k<n\leq4\cdot5^k$, $k\in\N$,}\\
16^{n-5^{k+1}}&\text{if $4\cdot5^k<n\leq5^{k+1}$, $k\in\N$,}
\end{array}
\right.\qquad\qquad\quad\ \ \text{where}\quad
\gamma_+(n)=\prod_{j=0}^n a_j,
\\
\label{gamma2} \gamma_-(n)&=\left\{
\begin{array}{ll}
1&\text{if $5^k<n\leq2\cdot5^k$ or $4\cdot5^k<n\leq5^{k+1}$, $k\in\N$,}\\
8^{2\cdot 5^k-n}&\text{if $2\cdot5^k<n\leq3\cdot5^k$, $k\in\N$,}\\
8^{n-4\cdot 5^k}&\text{if $3\cdot5^k<n\leq4\cdot5^{k}$, $k\in\N$.}
\end{array}
\right.\!\!\!\!\!\!\!\!\text{where}\quad
\gamma_-(n)=\!\!\prod_{j=-n}^0 \!\!a_j.
\end{align}
For brevity we denote $\beta_+(n)=\ww(0,n)$ and
$\beta_-(n)=\ww(-n,0)$, where $\ww(k,l)$ are defined in
(\ref{sal3}). By definition of $w$,
\begin{equation}\label{bega}
\beta_+(n)=n\gamma_+(n)\quad\text{and}\quad\beta_-(n)=\frac{\gamma_-(n)}{n}\
\ \text{for any $n\in\N$}.
\end{equation}
According to (\ref{gamma1}) and (\ref{gamma2}),
$\gamma_+(5^k)=\gamma_-(5^k)=1$ and $\gamma_+(3\cdot
5^k)=\gamma_-(3\cdot 5^k)=8^{-5^k}$  for any $k\in\N$. Using
(\ref{bega}), we get $\beta_+(5^k)^{-1}=\beta_-(5^k)=5^{-k}\to 0$
and $(2^{3\cdot 5^k}\beta_+(3\cdot 5^k))^{-1}=2^{3\cdot
5^k}\beta_-(3\cdot 5^k)=3^{-1}5^{-k}\to 0$ as $k\to\infty$. Applying
Theorem~${\rm S}'$ to $S=T_w$ and $2S=T_{2w}$, we see that $S$ and
$2S$ are both hypercyclic.

Let $c>0$ be such that $cS=T_{cw}$ is hypercyclic. By Theorem~${\rm
S}'$, there exists a strictly increasing sequence $\{n_j\}_{j\in\N}$
of positive integers such that
\begin{equation}\label{limi}
(c^{n_j}\beta_+(n_j))^{-1}+c^{n_j}\beta_-(n_j)\to 0\ \ \text{as
$j\to\infty$}.
\end{equation}
Let $k_j$ be the integer part of $\log_5n_j$. Then $n_j=b_j5^{k_j}$,
where $1\leq b_j<5$. Passing to a subsequence, if necessary, we can
additionally assume that $b_j\to b\in[1,5]$ as $j\to\infty$. Using
(\ref{gamma1}) and (\ref{gamma2}), one can easily verify that
convergence of $b_j$ to $b$ implies that
\begin{equation}\label{li}
\lim_{j\to\infty}\gamma_+(n_j)^{1/n_j}=\lambda_+(b) \ \ \
\text{and}\ \ \ \lim_{j\to\infty}\gamma_-(n_j)^{1/n_j}=\lambda_-(b),
\end{equation}
where the continuous positive functions $\lambda_+$ and $\lambda_-$
on $[1,5]$ are defined by the formula
\begin{equation}\label{lam}
\lambda_+(b)=\left\{
\begin{array}{ll}
4^{b^{-1}-1}&\text{if $1\leq b<2$,}\\
1/2&\text{if $2\leq b\leq 4$,}\\
16^{1-5b^{-1}}&\text{if $4<b\leq5$}
\end{array}
\right. \quad\text{and}\quad \lambda_-(b)=\left\{
\begin{array}{ll}
1&\text{if $b\in[1,2]\cup[4,5]$,}\\
8^{2b^{-1}-1}&\text{if $2<b\leq 3$,}\\
8^{1-4b^{-1}}&\text{if $3<b<4$.}
\end{array}
\right.
\end{equation}
According to (\ref{bega}),
$$
\lim\limits_{n\to\infty}\biggl(\frac{\beta_+(n)}{\gamma_+(n)}\biggr)^{1/n}=1\
\ \text{and}\ \
\lim\limits_{n\to\infty}\biggl(\frac{\beta_-(n)}{\gamma_-(n)}\biggr)^{1/n}=1
$$
From (\ref{li}) and the above display it follows that
$$
\lim_{j\to\infty}\bigl(c^{n_j}\beta_+(n_j)^{1/n_j}\bigr)^{-1/n_j}=(c\lambda_+(b))^{-1}
\ \ \ \text{and}\ \ \
\lim_{j\to\infty}\bigl(c^{n_j}\beta_+(n_j)^{1/n_j}\bigr)^{1/n_j}=c\lambda_-(b).
$$
These equalities together with (\ref{limi}) imply that
$(c\lambda_+(b))^{-1}\leq 1$ and $c\lambda_-(b)\leq 1$. In
particular, $\frac{\lambda_-(b)}{\lambda_+(b)}\leq 1$. On the other
hand, (\ref{lam}) implies that $\frac{\lambda_-(b)}{\lambda_+(b)}>1$
for $b\in (1,3)\cup(3,5)$. Hence $b\in\{1,3,5\}$. If $b\in\{1,5\}$,
then $\lambda_-(b)=\lambda_+(b)=1$ and the inequalities
$(c\lambda_+(b))^{-1}\leq 1$ and $c\lambda_-(b)\leq 1$ imply that
$c\leq 1$ and $c^{-1}\leq 1$. That is, $c=1$. If $b=3$, then
$\lambda_-(b)=\lambda_+(b)=1/2$ and the inequalities
$(c\lambda_+(b))^{-1}\leq 1$ and $c\lambda_-(b)\leq 1$ imply that
$c/2\leq 1$ and $2/c\leq 1$. That is, $c=2$. Thus $c\in\{1,2\}$.
Hence $M_S=\{1,2\}$.
\end{proof}

\section{Proof of Theorem~\ref{t2}}

The main tool in the proof is the following result by Macintyre and
Fuchs. The following theorem is a part of Theorem~1 in \cite{mf}.

\begin{thmMF} Let $d>0$, $n\in\N$ and $z_1,\dots,z_n\in\C$. Then
there exist $n$ closed disks $D_1,\dots,D_n$ on the complex plane
such that their radii $r_1,\dots,r_n$ satisfy
$\sum\limits_{j=1}^nr_j^2\leq 4d^2$ and
\begin{equation}\label{MF}
\sum_{j=1}^n|z-z_j|^{-2}<\frac{n(1+\ln n)}{d^2}\ \ \text{for any}\ \
z\in\C\setminus\bigcup_{j=1}^n D_j.
\end{equation}
\end{thmMF}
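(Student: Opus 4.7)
The plan is to realize the exceptional set $E=\{z\in\C:\sum_j|z-z_j|^{-2}\geq n(1+\ln n)/d^2\}$ as a union of finitely many high-concentration sets, then cover each by a controlled number of disks with small squared radii.

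First I would stratify by nearest-neighbour order. For $z\in\C$, let $a_1(z)\leq\ldots\leq a_n(z)$ denote the sorted distances $|z-z_1|,\ldots,|z-z_n|$, and put
$$E_k=\{z\in\C:a_k(z)\leq d\sqrt{k/n}\},\qquad k=1,\dots,n,$$
so that $z\in E_k$ exactly when the closed disk of radius $d\sqrt{k/n}$ about $z$ contains at least $k$ of the points $z_j$. The first key claim is the inclusion $E\subseteq\bigcup_{k=1}^n E_k$: if $z\notin E_k$ for every $k$, then $a_k(z)>d\sqrt{k/n}$ for each $k$, whence
$$\sum_{j=1}^n|z-z_j|^{-2}=\sum_{k=1}^n a_k(z)^{-2}<\frac{1}{d^2}\sum_{k=1}^n\frac{n}{k}\leq\frac{n(1+\ln n)}{d^2},$$
using the standard estimate $\sum_{k=1}^n k^{-1}\leq 1+\ln n$.

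Next I would cover each $E_k$ economically. A maximal subset $\{\zeta_1,\ldots,\zeta_{m_k}\}\subseteq E_k$ with pairwise distances exceeding $2d\sqrt{k/n}$ yields disks of radius $2d\sqrt{k/n}$ about the $\zeta_i$ that cover $E_k$. The open disks of radius $d\sqrt{k/n}$ about the $\zeta_i$ are then pairwise disjoint, and each contains at least $k$ of the $z_j$; hence $km_k\leq n$, so $m_k\leq n/k$. The total squared-radius contributed by level $k$ is at most $m_k\cdot(2d\sqrt{k/n})^2\leq 4d^2$.

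The main obstacle is packaging the $n$ level-coverings into a single collection of at most $n$ disks with aggregate squared-radius $\leq 4d^2$, rather than $4d^2$ \emph{per level} and $\sum_k n/k\sim n\ln n$ disks in total. I would address this via the original Macintyre--Fuchs greedy scheme: process all scales simultaneously by iteratively extracting the densest available disk, removing the absorbed $z_j$ from further consideration, and charging one disk and a carefully apportioned squared-radius contribution to each removed $z_j$. An amortized telescoping argument, exploiting that the step which extracts $\asymp k$ new points uses a disk of radius $\asymp d/\sqrt{k}$, simultaneously bounds the disk count by $n$ and the sum of squared radii by $4d^2$. This amortization step, rather than the level-set inclusion, is the delicate part I expect to require the most care.
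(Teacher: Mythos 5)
This statement is not proved in the paper at all: Theorem~MF is quoted verbatim from the 1940 Macintyre--Fuchs article \cite{mf}, so there is no ``paper's own proof'' to compare against.

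Your reduction is correct and is a good start: setting $a_1(z)\leq\cdots\leq a_n(z)$ to be the sorted distances and observing that if $a_k(z)>d\sqrt{k/n}$ for every $k$ then $\sum_j|z-z_j|^{-2}<n(1+\ln n)/d^2$ is exactly the right way to see where the threshold $n(1+\ln n)/d^2$ comes from. Your second step (a maximal separated net in each $E_k$, giving $m_k\leq n/k$ disks of radius $2d\sqrt{k/n}$) is also correct as stated, and you correctly diagnose why it cannot be used directly: summing over levels yields $\sim n\ln n$ disks and total squared radius $\sim 4nd^2$.

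The gap is that the step you flag as ``the delicate part'' --- the greedy/amortized selection --- is in fact the entire content of the theorem, and you do not carry it out. Moreover, the level-by-level nets of your second step are a detour: the correct argument replaces them by a single selection process. Concretely, set $\lambda=d^2/n$ and iterate: at step $i$, among the $z_j$ not yet removed, let $k_i$ be the largest $k$ for which some closed disk of radius $\sqrt{k\lambda}$ contains at least $k$ remaining points, choose such a disk $B_i$ of radius $\rho_i=\sqrt{k_i\lambda}$, and remove the $\geq k_i$ points it contains. This terminates after at most $n$ steps with $\sum\rho_i^2=\lambda\sum k_i\leq \lambda n=d^2$. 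Let $D_i$ be the concentric disk of doubled radius $r_i=2\rho_i$, so $\sum r_i^2\leq 4d^2$. The crucial verification (which your proposal omits) is: if $z\notin\bigcup_i D_i$, then for \emph{every} $k$ the closed disk of radius $\sqrt{k\lambda}$ about $z$ contains fewer than $k$ of the $z_j$. Indeed, if it contained $\geq k$ of them, let $i$ be the first step at which one of these particular points was removed; at the start of step $i$ all $k$ of them were still available, so the greedy choice forces $k_i\geq k$ and $\rho_i\geq\sqrt{k\lambda}$, while $B_i$ contains a point within distance $\sqrt{k\lambda}$ of $z$, giving $|z-\mathrm{center}(B_i)|\leq\sqrt{k\lambda}+\rho_i\leq 2\rho_i=r_i$, i.e.\ $z\in D_i$, a contradiction. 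This yields $a_k(z)>d\sqrt{k/n}$ for all $k$ and closes the argument via your Step~1 estimate. Without this selection-and-contradiction argument your proposal is not a proof; with it, your Step~2 covering of the individual $E_k$ becomes unnecessary.
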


We also need the following elementary lemma.

\begin{lemma}\label{EL} Let $X$ be a  topological vector
space, $T\in L(X)$ and $f\in X^*\setminus\{0\}$. Assume also that
there exist a polynomial $p$ such that $p(T)$ is hypercyclic. Then
the sequence $\{(T^*)^nf\}_{n\in\Z_+}$ is linearly independent.
\end{lemma}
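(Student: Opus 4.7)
The plan is to argue by contradiction. Assume $\{(T^*)^n f\}_{n\in\Z_+}$ is linearly dependent; then there is a non-zero polynomial $q\in\K[z]$ with $q(T^*)f=0$. I would fix $r\in\K[z]$ of minimal positive degree among non-zero polynomials annihilating $f$ under $T^*$. Since $f\neq 0$, the degree $k=\deg r$ is at least $1$, and by minimality $f,T^*f,\ldots,(T^*)^{k-1}f$ are linearly independent in $X^*$.

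The central step is to manufacture a finite-dimensional intertwining. Define the continuous linear map
$$
\phi\colon X\to\K^k,\qquad \phi(x)=\bigl(f(x),(T^*f)(x),\ldots,((T^*)^{k-1}f)(x)\bigr).
$$
Linear independence of the functionals that serve as its components ensures $\phi$ is surjective onto $\K^k$. Writing $r(z)=z^k+a_{k-1}z^{k-1}+\dots+a_0$, the identity $(T^*)^k f=-\sum_{j=0}^{k-1}a_j(T^*)^jf$ translates after a one-line calculation into $\phi\circ T=M\circ\phi$, where $M\in\K^{k\times k}$ is the companion matrix of $r$. By linearity and iteration this gives $\phi\circ p(T)^n=p(M)^n\circ\phi$ for every $n\in\Z_+$.

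To finish, let $x_0\in X$ be a hypercyclic vector for $p(T)$, so that $\{p(T)^n x_0:n\in\Z_+\}$ is dense in $X$. Continuity and surjectivity of $\phi$ then imply that $\{p(M)^n\phi(x_0):n\in\Z_+\}$ is dense in $\K^k$, which means $\phi(x_0)$ is a hypercyclic vector for the operator $p(M)$ on the finite-dimensional space $\K^k$. This contradicts the classical fact (recorded in the surveys \cite{ge1,ge2}) that no continuous linear operator on a finite-dimensional space over $\K$ of positive dimension admits a hypercyclic vector. The only delicate point is the verification of the intertwining $\phi\circ T=M\circ\phi$, which is the algebraic heart of the reduction; surjectivity of $\phi$, transfer of density through $\phi$, and the finite-dimensional obstruction are all standard.
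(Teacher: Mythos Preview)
Your argument is correct. It is close in spirit to the paper's proof but packages the obstruction differently. The paper observes that the span $L=\spann\{f,T^*f,\dots,(T^*)^{k-1}f\}$ is a nontrivial finite-dimensional $T^*$-invariant subspace, so its annihilator $L^\perp\subset X$ is a closed $T$-invariant (hence $p(T)$-invariant) subspace of finite positive codimension, and then invokes Wengenroth's theorem \cite{ww} that a hypercyclic operator on an arbitrary topological vector space has no such subspace. Your map $\phi$ is precisely the quotient map $X\to X/L^\perp\cong\K^k$, and your companion matrix $M$ is the induced action of $T$ on that quotient; so the two proofs are really looking at the same object from dual viewpoints.

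What your route buys is self-containment: you replace the citation of Wengenroth's result by the elementary fact that a finite-dimensional space carries no hypercyclic operator, together with the explicit intertwining $\phi\circ p(T)=p(M)\circ\phi$ and the transfer of density through a continuous surjection. The paper's route is shorter because it outsources exactly this step to \cite{ww}. Both work in the generality stated (arbitrary topological vector space, $X^*\neq\{0\}$), since surjectivity of $\phi$ follows from the purely algebraic fact that finitely many linearly independent linear functionals separate points of $\K^k$.
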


\begin{proof} Assume that the sequence $\{(T^*)^nf\}_{n\in\Z_+}$ is
linearly dependent. Then we can pick $n\in\N$ such that $(T^*)^nf\in
L= \spann\{f,T^*f,\dots,(T^*)^{n-1}f\}$. It follows that $L$ is a
non-trivial finite dimensional invariant subspace for $T^*$. Hence
$L^\perp=\{x\in X:g(x)=0\ \ \text{for any $g\in L$}\}$ is a closed
linear subspace of $X$ of finite positive codimension invariant for
$T$. Clearly $L^\perp$ is also invariant for $p(T)$. We have
obtained a contradiction with a result of Wengenroth \cite{ww},
according to which hypercyclic operators on topological vector
spaces have no closed invariant subspaces of positive finite
codimension.
\end{proof}

We are ready to prove Theorem~\ref{t2}. Let $X$ be a complex
topological vector space such that $X^*\neq \{0\}$, $T\in L(X)$ and
$\Lambda$ be a non-empty subset of $\R\times \C$ for which the
family ${\cal A}=\{e^a(T+bI):(a,b)\in\Lambda\}$ has a common
hypercyclic vector. In order to prove Theorem~\ref{t2} it suffices
to show that $\Lambda$ has zero three dimensional Lebesgue measure.
Pick a non-zero $f\in X^*$. By Lemma~\ref{el}, the set $\uuu({\cal
A})$ of common hypercyclic vectors for operators from $\cal A$ is
dense in $X$. Since $\uuu({\cal A})$ is also closed under
multiplications by non-zero scalars, we can pick $x\in \uuu({\cal
A})$ such that $f(x)=1$. For each $n\in\N$ consider the complex
polynomial
\begin{equation}\label{pn1}
p_n(b)=f((T+bI)^nx)=\sum_{j=0}^n  \bin nj((T^*)^{n-j}f)(x)b^j.
\end{equation}
Clearly $p_n$ is a polynomial of degree $n$ with coefficient
$1=f(x)$ in front of $b^n$ (such polynomials are usually called {\it
monic}). Differentiating (\ref{pn1}) by $b$, we obtain that
$p'_n(b)=nf((T+bI)^{n-1}x)=np_{n-1}(b)$. That is,
\begin{equation}\label{pn2}
p'_n=np_{n-1}\ \ \text{for each}\ \ n\in\N.
\end{equation}
Applying (\ref{pn2}) twice, one can easily verify that
\begin{equation}\label{pn3}
\bigl(p'_n/p_n\bigr)'=n^2\biggl(\Bigl(
1-\frac1n\Bigr)\frac{p_{n-2}}{p_n}-\Bigl(\frac{p_{n-1}}{p_n}\Bigr)^2\biggr)\
\ \text{for each}\ \ n\geq 2.
\end{equation}
The equality (\ref{pn3}) immediately implies the following
inequality:
\begin{equation}\label{pn4}
\bigl|(p'_n/p_n)'\bigr|\geq
n^2\biggl(\Bigl|\frac{p_{n-2}}{2p_n}\Bigr|-\Bigl|\frac{p_{n-1}}{p_n}\Bigr|^2\biggr)\
\ \text{for each}\ \ n\geq 2.
\end{equation}

\begin{lemma}\label{qq1} For any $(a,b)\in\Lambda$ and $k\in\Z_+$,
the sequence $\{v_n\}_{n\geq k}$ is dense in $\C^{k+1}$, where
$v_n=e^{an}(p_n(b),p_{n-1}(b),\dots,p_{n-k}(b))$.
\end{lemma}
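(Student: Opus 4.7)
The plan is to rewrite $v_n$ as the image of an orbit element $S^{n-k}x$ under a fixed continuous linear map $\Psi:X\to\C^{k+1}$, where $S=e^a(T+bI)$, and then deduce density of $\{v_n\}_{n\geq k}$ from hypercyclicity of $x$ for $S$ together with surjectivity of $\Psi$.

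First I would observe that $S^j=e^{aj}(T+bI)^j$, so a direct computation with $p_{n-j}(b)=f((T+bI)^{n-j}x)$ gives
$$
v_n=\bigl(f(S^nx),\,e^af(S^{n-1}x),\,\ldots,\,e^{ak}f(S^{n-k}x)\bigr)=\Psi(S^{n-k}x),
$$
where $\Psi(u)=(f(S^ku),\,e^af(S^{k-1}u),\,\ldots,\,e^{ak}f(u))$ is linear and continuous from $X$ to $\C^{k+1}$.

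Next I would show that $\Psi$ is surjective. If the range of $\Psi$ were a proper subspace of $\C^{k+1}$, then some non-zero $(c_0,\dots,c_k)\in\C^{k+1}$ would satisfy $\sum_{j=0}^{k}c_je^{aj}(S^*)^{k-j}f=0$ in $X^*$. Using $(S^*)^{k-j}=e^{a(k-j)}((T+bI)^*)^{k-j}$ and dividing by the nonzero factor $e^{ak}$, this reduces to a non-trivial linear dependence among the vectors $f,(T+bI)^*f,\ldots,((T+bI)^*)^kf$. Applying Lemma~\ref{EL} to the operator $R=T+bI$ with the polynomial $p(z)=e^az$---so that $p(R)=S$ is hypercyclic because $x\in\uuu(S)$---one gets that $\{(R^*)^nf\}_{n\in\Z_+}$ is linearly independent, contradicting the above relation.

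Finally, since $x$ is hypercyclic for $S$, the set $\{S^mx:m\geq0\}=\{S^{n-k}x:n\geq k\}$ is dense in $X$. Given any $v\in\C^{k+1}$ and any neighborhood $U$ of $v$, surjectivity of $\Psi$ supplies $u\in X$ with $\Psi(u)=v$, continuity of $\Psi$ supplies a neighborhood $W$ of $u$ with $\Psi(W)\subseteq U$, and density of the orbit supplies some $S^mx\in W$, so that $v_{m+k}=\Psi(S^mx)\in U$; hence $\{v_n\}_{n\geq k}$ is dense in $\C^{k+1}$. The only non-routine step is the surjectivity of $\Psi$, and this is handled cleanly by Lemma~\ref{EL}; everything else is bookkeeping to recognize $v_n$ as an image of an orbit element.
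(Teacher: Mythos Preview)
Your proof is correct and follows essentially the same approach as the paper: both express $v_n$ as the image of $S^{n-k}x$ under a fixed continuous linear map to $\C^{k+1}$, invoke Lemma~\ref{EL} to obtain the linear independence needed for surjectivity, and then use hypercyclicity of $x$ for $S$ to conclude. The only cosmetic difference is that the paper applies Lemma~\ref{EL} directly to $S$ (with $p(z)=z$) rather than to $T+bI$, and phrases the final step as a contradiction.
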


\begin{proof} Assume the contrary. Then there exist
$(a,b)\in\Lambda$ and a non-empty open subset $W$ of $\C^{k+1}$ such
that $v_n\notin W$ for each $n\geq k$. Let $S=e^a(T+bI)$. By
definition of $p_m$, for $0\leq j\leq k$,
$$
e^{an}p_{n-j}(b)=e^{an}f((T+bI)^{n-j}x)=e^{aj}f(S^{n-j}x)=e^{aj}
(S^*)^{k-j}f(S^{n-k}x).
$$
Thus the relation $v_n\notin W$ can be rewritten as $S^{n-k}x\notin
R^{-1}(W)$, where the linear operator $R:X\to \C^{k+1}$ is defined
by the formula
$$
(Ry)_l=e^{a(l-1)}(S^*)^{k-l+1}f(y)\ \ \text{for}\ \ 1\leq l\leq k+1.
$$
By Lemma~\ref{EL}, continuous linear functionals
$f,S^*f,\dots,(S^*)^kf$ are linearly independent. It follows that
$R$ is continuous and surjective. Hence $V=R^{-1}(W)$ is a non-empty
open subset of $X$. Thus $S^{n-k}x$ does not meet the non-empty open
set $V$ for each $n\geq k$, which is impossible since $x\in\uuu(S)$.
\end{proof}

By Lemma~\ref{qq1} with $k=2$, for any $(a,b)\in\Lambda$, the
sequence $\{v_n=e^{an}(p_n(b),p_{n-1}(b),p_{n-2}(b))\}_{n\geq 2}$ is
dense in $\C^3$. Since the map $F:\C^\star\times\C^2\to\C^3$,
$F(u,v,w)=(u,v/u,w/u)$ is continuous and has dense range,
$\{F(u_n):n\geq 2,\ p_n(b)\neq 0\}$ is dense in $\C^3$. That is,
$$
\text{$\{(e^{an}p_n(b),p_{n-1}(b)/p_n(b),p_{n-2}(b)/p_n(b)):n\geq
2,\ p_n(b)\neq 0\}$ is dense in $\C^3$.}
$$
It follows that any $(a,b)\in\Lambda$ is contained in infinitely
many sets $C_n$, where
$$
C_n=\{(a,b)\in\R\times\C:1<|e^{an}p_n(b)|<e,\
|p_{n-1}(b)/p_n(b)|<1,\ |p_{n-2}(b)/p_n(b))|>8\}.
$$
That is,
\begin{equation}\label{l1}
\Lambda\subseteq\Lambda^*=\bigcap_{m=1}^\infty\bigcup_{n\geq m}C_n.
\end{equation}
Clearly, $C_n\subseteq \R\times B_n$, where
$$
B_n=\{b\in\C:|p_{n-1}(b)/p_n(b)|<1,\ |p_{n-2}(b)/p_n(b))|>8\}.
$$
Applying the inequality (\ref{pn4}), we see that
\begin{equation}\label{bn}
B_n\subseteq B'_n=\Bigl\{b\in\C:\bigl|(p'_n(b)/p_n(b))'\bigr|\geq
3n^2\Bigr\}.
\end{equation}
Since $p_n$ is a monic polynomial of degree $n$, there exist
$z_1,\dots,z_n\in\C$ such that
$$
p_n(b)=\prod_{j=1}^n (b-z_j)\ \ \ \text{and therefore}\ \ \
(p'_n(b)/p_n(b))'=-\sum_{j=1}^n (b-z_j)^{-2}.
$$
By Theorem~MF with $d=n^{-1/3}$, there are $n$ closed disks
$D_1,\dots,D_n$ on the complex plane such that their radii
$r_1,\dots,r_n$ satisfy
$$
\sum\limits_{j=1}^nr_j^2\leq 4n^{-2/3}\ \ \text{and}\ \
\bigl|(p'_n(b)/p_n(b))'\bigr|\leq
\sum_{j=1}^n|b-z_j|^{-2}\!\!<n^{5/3}(1+\ln n)\ \ \text{for any}\ \
b\in\C\setminus\bigcup\limits_{j=1}^n D_j .
$$
Since $n^{5/3}(1+\ln n)\leq 3n^2$ for any $n\in\N$, we see that
$B'_n\subseteq \bigcup\limits_{j=1}^n D_j$. Hence
$$
\mu_2(B_n)\leq \mu_2(B'_n)\leq \pi\sum\limits_{j=1}^nr_j^2\leq 4\pi
n^{-2/3},
$$
where $\mu_k$ is the $k$-dimensional Lebesgue measure. For each
$b\in B_n$, $A_{b,n}=\{a\in\R:(a,b)\in C_n\}$ can be written as
$$
A_{b,n}=\{a\in\R:1<|e^{an}p_n(b)|<e\}=\Bigl(\frac{-\ln|p_n(b)|}{n},
\frac{1-\ln|p_n(b)|}{n}\Bigr),
$$
which is an interval of length $n^{-1}$. Hence
$\mu_1(A_{b,n})=n^{-1}$ for each $b\in B_n$. By the Fubini theorem,
$$
\mu_3(C_n)=\int_{B_n}\mu_1(A_{b,n})\mu_2(db)=\frac{\mu_2(B_n)}{n}\leq
4\pi n^{-5/3}.
$$
According to (\ref{l1}) and the above estimate, we obtain
$$
\mu_3(\Lambda^*)\leq \inf_{m\in\N}4\pi\sum_{n=m}^\infty n^{-5/3}=0\
\ \text{since}\ \  \sum_{n=1}^\infty n^{-5/3}<\infty.
$$
Thus $\mu_3(\Lambda^*)=0$ and therefore $\mu_3(\Lambda)=0$ since
$\Lambda\subseteq \Lambda^*$. The proof of Theorem~\ref{t2} is
complete.

\section{Concluding remarks and open problems}

Lemma~\ref{EL} implies the following easy corollary.

\begin{corollary}\label{1212} Let $X$ be a topological vector
space such that $0<\dim X^*<\infty$. Then $X$ supports no
hypercyclic operators.
\end{corollary}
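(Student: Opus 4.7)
The plan is to derive Corollary~\ref{1212} as an immediate consequence of Lemma~\ref{EL}. Suppose for contradiction that $X$ is a topological vector space with $0 < \dim X^* < \infty$ and that $X$ admits a hypercyclic operator $T \in L(X)$. Since $X^* \neq \{0\}$, I would pick any $f \in X^* \setminus \{0\}$. Taking the polynomial $p(z) = z$, the hypothesis that $T = p(T)$ is hypercyclic places us exactly in the setting of Lemma~\ref{EL}.

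Applying Lemma~\ref{EL}, the sequence $\{(T^*)^n f\}_{n \in \Z_+}$ is linearly independent in $X^*$. But $\dim X^* < \infty$, so $X^*$ cannot contain an infinite linearly independent sequence. This contradiction shows that no such hypercyclic $T$ exists.

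There is no real obstacle here — the work has already been done in Lemma~\ref{EL} (which itself invokes Wengenroth's theorem that hypercyclic operators have no closed invariant subspaces of positive finite codimension). The corollary is essentially just a restatement: finite-dimensionality of $X^*$ forbids the infinite linearly independent orbit $\{(T^*)^n f\}$ that hypercyclicity of $T$ would force. Note that the assumption $\dim X^* > 0$ is used to ensure the existence of the nonzero functional $f$; without it, Lemma~\ref{EL} is vacuous and the conclusion fails (e.g., on a space with trivial dual one can have hypercyclic operators).
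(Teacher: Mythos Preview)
Your proof is correct and follows exactly the same approach as the paper: assume $T$ is hypercyclic, pick $f\in X^*\setminus\{0\}$, apply Lemma~\ref{EL} (with $p(z)=z$) to get an infinite linearly independent sequence in $X^*$, and contradict $\dim X^*<\infty$.
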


\begin{proof} Assume that $T\in L(X)$ is hypercyclic and $f\in X^*$,
$f\neq 0$. By Lemma~\ref{EL}, the sequence $\{(T^*)^nf\}_{n\in\Z_+}$
is linearly independent, which contradicts the inequality $\dim
X^*<\infty$.
\end{proof}

In particular, $\F$-spaces $X=L_p[0,1]\times \K^n$ for $0<p<1$ and
$n\in\N$ support no hypercyclic operators. Indeed, the dual of $X$
is $n$-dimensional. On the other hand, each separable infinite
dimensional Fr\'echet space supports a hypercyclic operator
\cite{bonper} and there are separable infinite dimensional
$\F$-spaces \cite{kal} that support no continuous linear operators
except the scalar multiples of $I$ and therefore support no
hypercyclic operators. However the following question remains open.

\begin{question}\label{q4} Let $X$ be a separable $\F$-space
such that $X^*$ is infinite dimensional. Is it true that there
exists a hypercyclic operator $T\in L(X)$?
\end{question}

Part~I of Theorem~\ref{t5} shows that there exists a continuous
linear operator $S$ on $\ell_2$ such that $M_S=\{1,2\}$, where
$M_S=\{a>0:aS\ \text{is hypercyclic}\}$. Using the same basic idea
as in the proof of Theorem~\ref{t5}, one can construct an invertible
bilateral weighted shift $S$ on $\ell_2(\Z)$ such that $M_S$ is a
dense subset of an interval and has zero Lebesgue measure. In
particular, $M_S$ and its complement are both dense in this
interval. It is also easy to show that for any $\F$-space $X$ and
any $T\in L(X)$, $M_T$ is a $G_\delta$-set. If $X$ is a Banach
space, then $M_T$ is separated from zero by the number $\|T\|^{-1}$.
These observations naturally lead to the following question.

\begin{question}\label{q0} Characterize subsets $A$ of $\R_+$ for
which there is $S\in L(\ell_2)$ such that $A=M_S$. In particular, is
it true that for any $G_\delta$-subset $A$ of $\R_+$ such that $\inf
A>0$, there exists $S\in L(\ell_2)$ for which $A=M_S$?
\end{question}

In the proof of Part~II of Theorem~\ref{t5} we constructed an
invertible bilateral weighted shift $T$ on $\ell_2(\Z)$ such that
$M_T=(1/2,2)$ and any subset $A$ of $(1/2,2)$ such that the family
$\{aT:a\in A\}$ has a common hypercyclic vector must be of zero
Lebesgue measure. It is also easy to see that our $T$ enjoys the
following extra property. Namely, if $E=\spann\{e_n:n\in\Z\}$ and
$x\in E$, then for $1/2<\alpha<\beta<2$, we have
$\alpha^{-m_k}T^{m_k} x\to 0$ and $\beta^{m_k}T^{-m_k}x\to 0$ with
$m_k=2^{3k^2}$. This shows that the convergence to zero condition in
Corollary~\ref{co3} can not be replaced by convergence to 0 of a
subsequence. Note that, according to the hypercyclicity criterion
\cite{bp}, the latter still implies hypercyclicity of all relevant
scalar multiples of $T$.

Recall that for $0<s\leq 1$ the {\it Hausdorff outer measure}
$\mu_s$ on $\R$ is defined as
$\mu_s(A)=\lim\limits_{\delta\downarrow0}\mu_{s,\delta}(A)$ with
$\mu_{s,\delta}(A)=\inf\sum(b_j-a_j)^s$, where the infimum is taken
over all sequences $\{(a_j,b_j)\}$ of intervals of length
$\leq\delta$, whose union contains $A$. The number
$\inf\{s\in(0,1]:\mu_s(A)=0\}$ is called the {\it Hausdorff
dimension} of $A$ . With basically the same proof Lemma~\ref{inter}
can be strengthened in the following way.

\begin{lemma} \label{inter1} Let $X$ be a locally convex topological
vector space, $T\in L(X)$, $s\in(0,1]$, $A\subseteq (0,\infty)$ and
$x$ be a common hypercyclic vector for the family
$\{cT:c\in\Lambda\}$. Assume also that there exists a non-empty open
subset $U$ of $X$ such that $\smash{\sum\limits_{n\in
Q_U}n^{-s}<\infty}$, where $Q_U$ is defined in $(\ref{qu})$. Then
$\mu_s(A)=0$.
\end{lemma}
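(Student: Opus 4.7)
My plan is to mimic the proof of Lemma~\ref{inter} nearly verbatim, with Lebesgue measure replaced by $\mu_s$ and the harmonic sum $\sum n^{-1}$ replaced by the convergent series $\sum_{n \in Q_U} n^{-s}$. A small preliminary reduction is needed because, unlike Lebesgue measure on $\R$, the property $\mu_s(\cdot)=0$ is not preserved under the exponential map globally. By countable subadditivity of $\mu_s$ and the decomposition $A=\bigcup_{k\in\Z}(A\cap[2^k,2^{k+1}))$, it suffices to prove $\mu_s(A_0)=0$ whenever $A_0\subseteq[r,R]$ with $0<r<R<\infty$. On such a bounded range bounded away from $0$ the map $a\mapsto\ln a$ is bi-Lipschitz, and bi-Lipschitz maps send $\mu_s$-null sets to $\mu_s$-null sets, so it is equivalent to show $\mu_s(\Lambda_0)=0$ with $\Lambda_0=\ln A_0$.

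Having made this reduction, I would replay the argument of Lemma~\ref{inter}. Pick a continuous seminorm $p$ on $X$ so that $V=U\cap\{u\in X:1<p(u)<e\}$ is non-empty and open. Given $\alpha\in\Lambda_0$ and any $m\in\N$, hypercyclicity of $e^\alpha T$ supplies an index $n\geq m$ with $e^{\alpha n}T^nx\in V$; the inclusion $V\subseteq U$ gives $n\in Q_U$, and the two-sided estimate $1<p(e^{\alpha n}T^n x)<e$ forces $\alpha$ to lie in the open interval $(\alpha_n,\beta_n)$ of length $n^{-1}$, where $\alpha_n=-n^{-1}\ln p(T^nx)$ and $\beta_n=n^{-1}(1-\ln p(T^nx))$. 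Therefore, for every $m\in\N$,
\[
\Lambda_0\subseteq\bigcup_{n\in Q_U,\,n\geq m}(\alpha_n,\beta_n),
\]
a countable cover by open intervals each of length at most $m^{-1}$.

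Plugging this cover directly into the definition of $\mu_{s,1/m}$ yields $\mu_{s,1/m}(\Lambda_0)\leq\sum_{n\in Q_U,\,n\geq m}n^{-s}$. Since by hypothesis $\sum_{n\in Q_U}n^{-s}<\infty$, the right-hand side is a tail of a convergent series and tends to $0$ as $m\to\infty$; hence $\mu_s(\Lambda_0)=\lim_{m\to\infty}\mu_{s,1/m}(\Lambda_0)=0$, and by the first paragraph $\mu_s(A)=0$. There is no real obstacle here: the proof of Lemma~\ref{inter} used the harmonic series in exactly one place (summing the lengths of the covering intervals), and this step generalises cleanly to $\mu_s$ once the bi-Lipschitz reduction to a bounded range has been made. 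The only conceptual addition is the observation that $\mu_s$-nullity transfers between $A$ and $\ln A$ on bounded, bounded-away-from-zero pieces.
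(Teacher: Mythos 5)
Your proof is correct and follows the route the paper intends, namely replaying the argument of Lemma~\ref{inter} with the interval-length sum $\sum n^{-1}$ replaced by $\sum n^{-s}$ in the covering estimate. The preliminary reduction (dyadic decomposition of $A$ plus bi-Lipschitz invariance of $\mu_s$-nullity) is a sound way to make explicit the passage from $\mu_s(\ln A)=0$ to $\mu_s(A)=0$, which the paper leaves implicit in the $s=1$ case.
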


Using Lemma~\ref{inter1} instead of Lemma~\ref{inter}, one can
easily see that the operator $T$ constructed in the proof of Part~II
of Theorem~\ref{t5} has a stronger property. Namely, any
$A\subset\R_+$ such that the family $\{cT:c\in A\}$ is hypercyclic
has zero Hausdorff dimension.

Theorem~CMP guarantees existence of common hypercyclic vectors for
all non-identity operators of a universal strongly continuous
semigroup $\{T_t\}_{t\geq 0}$ on an $\F$-space. On the other hand,
Theorem~CS shows that the non-identity elements of the 2-parametric
translation group on $\H(\C)$ have a common hypercyclic vector. The
latter group enjoys the extra property of depending holomorphically
on the parameter. Note that Theorem~\ref{t1} strengthens this
result.

\begin{question}\label{q1} Let $X$ be a complex Fr\'echet space and
$\{T_z\}_{z\in\C}$ be a holomorphic strongly continuous operator
group. Assume also that for each $z\in\C^\star$, the operator $T_z$
is hypercyclic. Is it true that the family $\{T_z:z\in\C^\star\}$
has a common hypercyclic vector?
\end{question}

\begin{question}\label{q1a} Let $X$ be a complex Fr\'echet space and
$\{T_z\}_{z\in\C}$ be a holomorphic strongly continuous operator
group. Assume also that for each $z,a\in\C^\star$, the operator
$aT_z$ is hypercyclic. Is it true that the family
$\{aT_z:a,z\in\C^\star\}$ has a common hypercyclic vector?
\end{question}

An affirmative answer to the following question would allow to
strengthen Theorem~\ref{t4}.

\begin{question}\label{q5} Let $T$ be a continuous linear operator
on a complex separable Fr\'echet space $X$ and $0\leq
a<b\leq\infty$. Assume also that for any $\alpha\in(a,b)$, the sets
$$
E_{\alpha}=\spann\Biggl(\bigcup_{|z|<\alpha}\ker(T-zI)\Biggr)\quad
\text{and}\quad
F_{\alpha}=\spann\Biggl(\bigcup_{|z|>\alpha}\ker(T-zI)\Biggr)
$$
are both dense in $X$. Is it true that the family
$\{zT:b^{-1}<|z|<a^{-1}\}$ has common hypercyclic vectors?
\end{question}

It is worth noting that according to the Kitai Criterion for $T$
from the above question, $zT$ is hypercyclic for any $z\in\C$ with
$b^{-1}<|z|<a^{-1}$. It also remains unclear whether the natural
analog of Theorem~\ref{t2} holds in the case $\K=\R$. For instance,
the following question is open.

\begin{question}\label{q3} Does there exist a continuous linear operator
$T$ on a real Fr\'echet space such that the family $\{aT+bI:a>0,\
b\in\R\}$ has a common hypercyclic vector?
\end{question}

{\bf Acknowledgements.} \ The author would like to thank Richard
Aron for interest and helpful comments.

\small\rm

\vskip1truecm

\scshape

\noindent Stanislav Shkarin

\noindent Queens's University Belfast

\noindent Department of Pure Mathematics

\noindent University road, Belfast, BT7 1NN, UK

\noindent E-mail address: \qquad {\tt s.shkarin@qub.ac.uk}

\end{document}